\newtheorem{theorem}{Theorem}[section]
\newtheorem{lemma}[theorem]{Lemma}
\newtheorem{definition}[theorem]{Definition}
\newtheorem{corollary}[theorem]{Corollary}
\theoremstyle{remark}
\newtheorem{remark}[theorem]{Remark}
\numberwithin{equation}{section}
\newcommand{\R}{\mathbb{R}}
\newcommand{\C}{\mathbb{C}}
\newcommand{\N}{\mathbb{N}}
\newcommand{\E}{\mathbb{E}}
\DeclareMathOperator{\erf}{erf}
\newcommand{\ET}[1]{\mathbb{E}_{\Theta}\left[#1\right]}
\newcommand{\ETt}[1]{\mathbb{E}_{\Theta}^t\left[#1\right]}
\newcommand{\PTt}[1]{\mathbb{P}_{\Theta}^t\left[#1\right]}
\newcommand{\VTt}[1]{\mathbb{V}_{\Theta}^t\left[#1\right]}
\newcommand{\En}[1]{\mathbb{E}_{\Theta}^n\left[#1\right]}
\newcommand{\Pn}[1]{\mathbb{P}_{\Theta}^n\left[#1\right]}
\newcommand{\Et}[1]{\mathbb{E}_{\Theta}^t\left[#1\right]}
\newcommand{\Pt}[1]{\mathbb{P}_{\Theta}^t\left[#1\right]}
\newcommand{\slan}{\sum_{\la \vdash n}\frac{1}{z_\la}}
\newcommand{\sla}{\sum_{\la}\frac{1}{z_\la}}
\newcommand{\la}{\lambda}
\newcommand{\nth}[1]{[t^n]\left[ #1 \right]}
\newcommand{\set}[1]{\left\{#1\right\}}
\DeclareMathOperator{\one}{\mathds{1}}
\renewcommand{\Re}{\mathrm{Re}}
\newcommand{\Sn}{\mathfrak{S}_n}
\DeclareMathOperator{\Li}{Li}
\DeclareMathOperator{\lcm}{lcm}
\begin{document}
\title[The Erd\H{o}s-Tur\'an law for weighted random permutations]{Total variation distance and the Erd\H{o}s-Tur\'an law for random permutations with polynomially growing cycle weights.}
%\title[The Erd\"os-Tur\'an law for weighted random permutations]{The Erd\"os-Tur\'an law for weighted random permutations.}
\date{\today}

\author[J. Storm]{Julia Storm}
\address{Universit\"at Z\"urich\\Institut f\"ur Mathematik\\  Winterthurerstrasse 190\\ 8057-Z\"urich,
Switzerland} \email{julia.storm@math.uzh.ch}

\author[D. Zeindler]{Dirk Zeindler}
\address{Lancaster University\\ Mathematics and Statistics \\ Fylde College \\Bailrigg\\Lancaster\\United Kingdom\\LA1 4YF} \email{d.zeindler@lancaster.ac.uk}

\begin{abstract}
We study the model of random permutations of $n$ objects with polynomially growing cycle weights, which was recently considered by Ercolani and Ueltschi, among others. 
Using saddle-point analysis, we prove that the total variation distance between the process which counts the cycles of size $1, 2, ..., b$ and a process $(Z_1, Z_2, ..., Z_b)$ of independent Poisson random variables converges to $0$ if and only if $b=o(\ell)$ where $\ell$ denotes the length of a typical cycle in this model.
By means of this result, we prove a central limit theorem for the order of a permutation and thus extend the Erd\H{o}s-Tur\'an Law to this measure. Furthermore, we prove a Brownian motion limit theorem for the small cycles.
\end{abstract}

\maketitle

%---------------------------------------------------------------------------
\tableofcontents

\section{Introduction} \label{section:introduction}

Denote by $\Sn$ the permutation group on $n$ objects. For a permutation $\sigma \in \Sn$  the order $O_n = O_n(\sigma)$ is the smallest integer $k$ such that the $k$-fold application of $\sigma$ to itself gives the identity. Landau \cite{La09} proved in 1909 that the maximum of the order of all $\sigma \in \Sn$ satisfies, for $n \rightarrow \infty$, the asymptotic
\begin{align}\label{eq:intro_asymp_Landau}
 \max_{\sigma \in \Sn}(\log (O_n)) \sim \sqrt{n \log (n)}.
\end{align}
 
 On the other hand, $O_n(\sigma)$ can be computed as the least common multiple of the cycle length of $\sigma$. Thus, if $\sigma$ is a permutation that consists of only one cycle of length $n$, then $\log(O_n(\sigma)) = \log(n)$, and $(n-1)!$ of all $n!$ possible permutations share this property. 
 Considering these two extremal types of behavior,  a famous result of Erd\H{o}s and Tur\'an \cite{ErTu65} seems even more remarkable: they showed in 1965 that, choosing $\sigma$ with respect to the uniform measure, a Normal limit law
\begin{align}\label{thm:intro_clt_order}
 \frac{\log (O_n) - \frac{1}{2}\log^2 (n)}{\sqrt{\frac{1}{3}\log^3 (n)}} \overset{d}{\longrightarrow} \mathcal{N}(0,1)
\end{align}
is satisfied as $n \rightarrow \infty$.

Several authors gave probabilistic proofs of this result, among them those of Best \cite{Be70} (1970), DeLaurentis and Pittel \cite{DePi85} (1985), whose proof is based on a functional central limit theorem for the cycle counts, and Arratia and Tavar\'e \cite{AT92b} (1992), who use the Feller coupling. This result was also extended to the Ewens measure and to A-permutations, see for instance \cite{AT92b} and \cite{Ya10a}.

In this paper we extend the Erd\H{o}s-Tur\'an Law to random permutations chosen according to a generalized weighted measure with polynomially growing cycle weights, see Theorem~\ref{thm:algrowth_CLT_logO_n}. One of our motivations is to find weights such that the order of a typical permutation with respect to this measure comes close to the maximum as in Landau's result.

To define the generalized weighted measure, denote by $C_m=C_m(\sigma)$ the number of cycles of length $m$ in the decomposition of the permutation $\sigma$ as a product of disjoint cycles. The functions $C_1$, $C_2$,\,\dots are random variables on $\Sn$ and we will call them cycle counts.  

\begin{definition}
\label{def:measure}
Let  $\Theta = \left(\theta_m  \right)_{m\geq1}$ be given with $\theta_m\geq0$ for every $m\geq 1$. We then define for $\sigma\in \Sn$
\begin{align}\label{eq:def:measure}
  \Pn{\sigma}
%  :=
%  \frac{1}{h_n n!} \prod_{m=1}^{\ell(\la)} \theta_{\la_m} \one_{\set{ \la_m\in A_n}}
   :=
   \frac{1}{h_n n!} \prod_{m=1}^{n} \theta_{m}^{C_m} 
\end{align}
with $h_n = h_n(\Theta)$ a normalization constant and $h_0:=1$.
If $n$ is clear from the context, we will just write $\mathbb{P}_\Theta$ instead of $\mathbb{P}_\Theta^n$ .
\end{definition}

Notice that special cases of this measure are the uniform measure ($\theta_m \equiv 1$) and the Ewens measure ($\theta_m \equiv \theta$). 

These families of probability measures with weights depending on the length of the cycle recently appeared in particular in the work of Betz, Ueltschi and Velenik  \cite{BeUeVe11} and Erconali and Ueltschi \cite{ErUe11}. They studied a model of the quantum gas in statistical mechanics and the parameters $\theta_m$ may depend on the density, the temperature or the particle interaction and thus their structure might be complicated.

Subsequently, several properties of permutations have been studied under this measure by many authors and for different classes of parameters, see for instance \cite{HuNaNiZe11,Ma12a,Ma11,MaNiZe11,NiStZe13,NiZe11}. 
In this paper, the large $n$ statistics of $\log(O_n)$ are considered for polynomially growing parameters $\Theta = \left(\theta_m  \right)_{m\geq1}$ with $\theta_m = m^{\gamma}$, $\gamma >0$. Only few results are known for these parameters. Ercolani and Ueltschi \cite{ErUe11} show that under this measure, a typical cycle has length of order $n^{\frac{1}{1+\gamma}}$ and that the total number of cycles has order $n^{\frac{\gamma}{1+\gamma}}$. They also prove that the component process converges in distribution to mutually \textit{independent} Poisson random variables $Z_m$:
\begin{align}\label{eq:intro_convergence_fixed_b}
(C_1^n, C_2^n, ...) \overset{d}{\longrightarrow} (Z_1, Z_2, ...), \quad \quad \text{as } n \rightarrow \infty.
\end{align}
For many purposes this convergence is not strong enough, since it only involves the convergence of the vectors $(C_1^n, C_2^n, ..., C_b^n)$ for \textit{fixed} $b$. However, many natural properties of the component process jointly depend on all components, including the large ones, even though their contribution is less relevant. Thus, estimates are needed where $b$ and $n$ grow simultaneously.
The quality of the approximation can conveniently be described in terms of the total variation distance. For all $1 \leq b \leq n$ denote by $d_b(n)$ the total variation distance
\begin{align}\label{eq:intro_tot_var_dist_d_bn}
d_b(n) := d_{TV}\big(\mathcal{L}(C_1^n, C_2^n, ..., C_b^n), \mathcal{L}(Z_1, Z_2, ..., Z_b)\big).
\end{align}
For the uniform measure, where the $Z_m$ are independent Poisson random variables with mean $1/m$, it  was proved in 1990 by Barbour \cite{Ba90} that $d_b(n) \leq 2b/n$. This bound may be improved significantly. In 1992 Arratia and Tavar\'e \cite{AT92b} showed that 
\begin{align}\label{eq:intro_d_bn_for_Ewens}
d_b(n) \rightarrow 0 \quad \text{ if and only if } \quad b = o(n).
\end{align}
In particular, if $b = o(n)$, then  $d_b(n) \rightarrow 0$ superexponentially fast relative to $n/b$. The extension of these results to the Ewens measure is straightforward (here each $Z_m$ has mean $\vartheta/m$), but superexponential decay of $d_b(n)$ is only attained for $\vartheta =1$. For parameters $\vartheta \neq 1$ we have $d_b(n) = O(b/n)$, see Arratia et al. \cite[Theorem 6]{ArBaTa92}. For the uniform and the Ewens measure, the Feller coupling is used to study $d_b(n)$. When considering random permutations with respect to the weighted measure $\mathbb{P}_{\Theta}$, the Feller coupling is not available because of a lack of compatibility between the dimensions. Another approach is needed and it will turn out that for $\theta_m =m^\gamma$ the saddle point method is the right one to choose. We will prove in Section~\ref{section:algrowth_total_var_distance} that for appropriately chosen Poisson random variables $Z_m$ the following holds:
\begin{theorem}\label{thm:algrowth_total_variation}
Let $d_b(n)$ be defined as in \eqref{eq:intro_tot_var_dist_d_bn} and assume $\theta_m = m^{\gamma}, \gamma >0$. Then, as $n \rightarrow \infty$,
\begin{align}\label{eq:intro_d_bn_for_algrowth}
 d_b(n) \rightarrow 0 \quad \text{if and only if} \quad  b = o(n^{\frac{1}{1+\gamma}}).
\end{align}
Furthermore, if $b = o(n^{\frac{1}{1+\gamma}})$, then $d_b(n) = O\big(b^{2+\gamma}n^{-\frac{2+\gamma}{1+\gamma}} + b^{-\frac{\gamma}{6}} + n^{-\frac{\gamma}{1+\gamma}} \big)$. 
\end{theorem}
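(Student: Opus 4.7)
The plan is to start from a \emph{conditioning relation}. Introducing independent Poisson random variables $Z_m$ with means $\mu_m := r^m \theta_m/m = r^m m^{\gamma-1}$ for a parameter $r \in (0,1)$ and setting $T := \sum_{m \ge 1} m Z_m$, an elementary calculation from Definition~\ref{def:measure} yields
\[
\Pn{C_1 = k_1, \ldots, C_n = k_n} = \PT{Z_1 = k_1, \ldots, Z_n = k_n \mid T = n}.
\]
I would fix $r = r_n$ as the saddle point, i.e.\ the unique solution of $\ET{T} = \Li_{-\gamma}(r) = n$ in $(0,1)$, which as $r \to 1^-$ satisfies $1 - r \sim (\Gamma(1+\gamma)/n)^{1/(1+\gamma)}$, so that $\ell := 1/(1-r) \asymp n^{1/(1+\gamma)}$ agrees with the typical cycle length identified in \cite{ErUe11}. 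Marginalising the above conditional law over $\{k_m : m > b\}$ and using independence of the $Z_m$ gives the standard total-variation identity
\[
d_b(n) = \frac{1}{2} \sum_{s \ge 0} \PT{T_A = s}\, \left| \frac{\PT{T_{A^c} = n - s}}{\PT{T = n}} - 1 \right|,
\]
where $A = \{1,\ldots,b\}$, $T_A = \sum_{m=1}^{b} m Z_m$, and $T_{A^c} = T - T_A$.

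Direct summation yields $\VT{T} = \Li_{-1-\gamma}(r) \sim \Gamma(2+\gamma)\, \ell^{2+\gamma} \asymp n^{(2+\gamma)/(1+\gamma)}$, and for $b = o(\ell)$, $\VT{T_A} = \sum_{m \le b} r^m m^{1+\gamma} \sim b^{2+\gamma}/(2+\gamma)$, so that $\VT{T_A}/\VT{T} \asymp (b/\ell)^{2+\gamma} = o(1)$. To control the ratio above I would establish sharp local-CLT expansions for $T$ and $T_{A^c}$ by the saddle-point method applied to
\[
\PT{T_{A^c} = n - s} = \frac{1}{2\pi}\int_{-\pi}^{\pi} \exp\Big(\sum_{m > b} \mu_m\bigl(e^{imt}-1\bigr)\Big)\, e^{-i(n-s)t}\, dt,
\]
splitting into a central region $|t| \le n^{-\alpha}$ where the exponent is accurately captured by its quadratic Taylor expansion around $t=0$, and a complementary region where $|\phi_{T_{A^c}}(t)|$ is bounded exponentially using the polynomial growth of the $\mu_m$. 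An analogous treatment applies to $\PT{T = n}$, and the outcome is a Gaussian approximation $\PT{T_{A^c} = n-s} = (2\pi \VT{T_{A^c}})^{-1/2} \exp\!\bigl(-(n-s-\ET{T_{A^c}})^2/(2\VT{T_{A^c}})\bigr)\bigl(1 + \epsilon(n,b,s)\bigr)$ uniformly for $s$ in the bulk of $T_A$.

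Dividing by the analogous expansion of $\PT{T = n}$ (whose quadratic Gaussian exponent vanishes by the choice of $r$) and using $\VT{T_{A^c}} = \VT{T}\bigl(1 + O((b/\ell)^{2+\gamma})\bigr)$, the ratio on the typical set $\{s : |s - \ET{T_A}| \le K\, \VT{T_A}^{1/2}\}$ becomes $1 + O\bigl((b/\ell)^{2+\gamma}\bigr) + O(\text{saddle-point errors})$, since on this set $(s - \ET{T_A})^2/\VT{T_{A^c}} = O((b/\ell)^{2+\gamma})$. Contributions of atypical $s$ are absorbed by Chebyshev applied to $T_A$. The three terms in the stated bound correspond respectively to the leading Gaussian error $(b/\ell)^{2+\gamma} = b^{2+\gamma} n^{-(2+\gamma)/(1+\gamma)}$, a subleading saddle-point correction of size $b^{-\gamma/6}$ coming from higher-order (third-moment/Edgeworth) terms whose scale is set by the removed small-cycle block, and the residual local-CLT correction $\ell^{-\gamma} = n^{-\gamma/(1+\gamma)}$ inherent in the approximation $\PT{T = n} \sim (2\pi \VT{T})^{-1/2}$, which itself comes from the fourth-order term $\mu_4(T)/\VT{T}^{2} \asymp \ell^{-\gamma}$ in the Edgeworth expansion.

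For the necessity direction I would argue by contradiction: if $b \ge c\,\ell$ along a subsequence for some $c > 0$, then $\VT{T_A}/\VT{T}$ is bounded below by a positive constant, so the same Gaussian expansion forces $\PT{T_{A^c} = n-s}/\PT{T = n}$ to be bounded away from $1$ on a fixed-mass set of $s$, making $d_b(n) \not\to 0$. The hardest technical ingredient is the \emph{uniform} saddle-point control of $\PT{T_{A^c} = n - s}$ on a window of $s$ of size $\VT{T_A}^{1/2}$; this requires a delicate bound on $|\phi_{T_{A^c}}(t)|$ away from $t = 0$ via a careful analysis of $\sum_{m > b} \mu_m(\cos(mt) - 1)$, exploiting that the variance of $T_{A^c}$ is carried by modes $m$ of order $\ell$, precisely where the polynomial growth $\theta_m = m^{\gamma}$ makes the saddle-point expansion productive.
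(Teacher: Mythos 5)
Your proposal follows essentially the same architecture as the paper's argument: the Conditioning Relation to write the law of $(C_1,\dots,C_n)$ as a tilted Poisson vector conditioned on $\{T=n\}$, the reduction of $d_b(n)$ to the one-dimensional quantity
\begin{align*}
d_b(n)=\sum_{k\ge 0}\PTt{T_{0b}=k}\Big(1-\frac{\PTt{T_{bn}=n-k}}{\PTt{T_{0n}=n}}\Big)^+,
\end{align*}
local (saddle-point) approximation of $\PTt{T_{bn}=n-k}/\PTt{T_{0n}=n}$ uniformly over a bulk window for $k$, and the observation that the whole mechanism hinges on $\sigma_{0b}=o(\sigma_{bn})$, i.e.\ $(b/\ell)^{2+\gamma}\to 0$ with $\ell\asymp n^{1/(1+\gamma)}$. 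The necessity direction via ``the variance ratio stays bounded below'' is also the paper's idea (they additionally split the case $b\gg\ell$, where a direct tail estimate $\PTt{T_{0b}>n}>0$ works, from $b\asymp\ell$, where they invoke a variance argument and a result of Cipriani--Zeindler as a cross-check).

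There is, however, one concrete gap in how you control the atypical $k$'s and hence in how you would justify the stated rate. You propose to ``absorb'' the atypical contribution by Chebyshev applied to $T_A$. Chebyshev alone gives a tail bound of size $K^{-2}$ outside a window of half-width $K\sigma_{0b}$; balancing this against the bulk error $\asymp (K\sigma_{0b}/\sigma_{bn})^2 \asymp K^2(b/\ell)^{2+\gamma}$ yields an optimal overall error $O\big((b/\ell)^{(2+\gamma)/2}\big)$, which is \emph{strictly worse} than the term $(b/\ell)^{2+\gamma}=b^{2+\gamma}n^{-(2+\gamma)/(1+\gamma)}$ in the Theorem, and in particular it does not produce the $b^{-\gamma/6}$ term anywhere. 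The paper instead proves a sharp quantitative normal approximation for $T_{0b}$ itself: after rescaling by $b^{1+\gamma/3}$ (chosen so the third cumulant of the rescaled variable is $O(1)$, i.e.\ mod-Gaussian convergence), one has a Kolmogorov-distance bound $d_K(T_{0b}^x,G_{0b})=O(\sigma_{0b^x}^{-1})=O(b^{-\gamma/6})$. This sharp Gaussian tail lets the atypical contribution decay like $g^{-1}(b)e^{-g^2(b)}+b^{-\gamma/6}$ for the width parameter $g(b):=(n^{1/(1+\gamma)}/b)^{\gamma/2}$, and the bulk contribution then comes out as $O\big((g(b)\sigma_{0b}/\sigma_{bn})^2\big)$ together with the $O(n^{-\gamma/(1+\gamma)})$ from the refined saddle-point expansion of $\PTt{T_{bn}=m}/\PTt{T_{0n}=n}$; the three terms in the theorem are exactly these. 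So your attribution of $b^{-\gamma/6}$ to ``a subleading saddle-point correction'' in the expansion of $\PTt{T_{A^c}=n-s}$ is incorrect: it is the Kolmogorov-distance error for $T_{0b}$, and it disappears if one instead computes $\PTt{T_{0b}=m}$ itself by saddle point (the paper notes exactly this in a remark). To close the gap you would need to replace Chebyshev with an Edgeworth/Berry--Esseen bound for $T_{0b}$ (or carry out the saddle-point expansion for $T_{0b}$), otherwise the rate in the statement is not obtained.
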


For the Ewens measure several applications demonstrating the power of \eqref{eq:intro_d_bn_for_Ewens} are available. Estimates like these unify and simplify proofs of limit theorems for a variety of functionals of the cycle counts, such as a Brownian motion limit theorem for cycle counts and the Erd\H{o}s-Tur\'an Law for the order of a permutation (see \eqref{thm:intro_clt_order}), among others; see \cite{ArTa92} for a detailed account. The basic strategy is as follows. 
First, choose an appropriate $b \rightarrow \infty$ and show that the contribution of the cycles of size bigger than $b$ is negligible. Second, approximate the distribution of the cycles of size at most $b$ by the independent limiting process, the error being controlled by the bound on the total variation distance.

Comparing \eqref{eq:intro_d_bn_for_Ewens} and \eqref{eq:intro_d_bn_for_algrowth} we notice that for polynomially growing parameters, the cycle counts exhibit a more dependent structure. 
An intuitive explanation is the following. In the Ewens case, a typical cycle has length of order $n$, and the numbers of cycles of length $o(n)$ are asymptotically independent of each other. 
For polynomially growing parameters $\theta_m = m^{\gamma}$, a typical cycle has length of order $n^{\frac{1}{1+\gamma}}$ (see \cite[Theorem 5.1]{ErUe11}), 
providing an intuitive justification for the bound on $b$ in \eqref{eq:intro_d_bn_for_algrowth}. 

The condition $b = o(n^{\frac{1}{1+\gamma}})$ in Theorem~\ref{thm:algrowth_total_variation} is much more restrictive than the condition $b=o(n)$ for the Ewens measure. 
Thus, the study of random variables involving almost all cycle counts $C_m$ is more difficult for weights $\theta_m = m^{\gamma}, \gamma >0$.
The reason is that in many cases the cycles with length longer than $n^{\frac{1}{1+\gamma}}$ have a non-negligible contribution 
(see also Remark~\ref{remark_bound_total_var_distance_too_small}). However, in Section~\ref{subsection:algrowth_erdoes_turan} we will show that \eqref{eq:intro_d_bn_for_algrowth} is useful to prove an analogue of the Erd\H{o}s-Tur\'an Law \eqref{thm:intro_clt_order} for our setting. We will prove
\begin{theorem}\label{thm:algrowth_CLT_logO_n}
Assume $\theta_m = m^{\gamma}$ with $0< \gamma < 1$. Then, as $n \rightarrow \infty$, 
\begin{align*}
\frac{\log (O_n) - G(n)}
{\sqrt{ F(n)}} 
\overset{d}{\longrightarrow} \mathcal{N}(0,1)
\end{align*}
where $\mathcal{N}(0,1)$ denotes the standard Gaussian distribution and 
\begin{align*}
F(n)&= \frac{K(\gamma)}{(1+\gamma)^3} \, n^{\frac{\gamma}{1+\gamma}} \log^2 (n),\\
G(n)&= \frac{K(\gamma)}{1+\gamma} \, n^{\frac{\gamma}{1+\gamma}} \log(n) + n^{\frac{\gamma}{1+\gamma}} H(n) \quad \text{with}\\
H(n) &= K(\gamma) \bigg(\frac{\Gamma'(\gamma)}{\Gamma(\gamma)}- \frac{\log(\Gamma(1+\gamma))}{1+\gamma} \bigg) ;
\end{align*} 
here $\Gamma'$ denotes the derivative of the gamma function and
$$K(\gamma) = \Gamma(\gamma) \Gamma(1+\gamma)^{-\frac{\gamma}{1+\gamma}} .$$
\end{theorem}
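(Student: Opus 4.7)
The plan is to adapt the classical probabilistic proof of the Erd\H{o}s-Tur\'an law \cite{AT92b} to the present weighted setting. Decompose
\[
\log O_n = \sum_{p,\,k}(\log p)\,A_{p,k}, \qquad A_{p,k} := \mathbf{1}\bigl[\exists m: p^k \mid m \text{ and } C_m \geq 1\bigr],
\]
and write $\alpha := 1/(1+\gamma)$, $\beta := \gamma/(1+\gamma)$, so that $\alpha + \beta = 1$ and $\beta < \alpha$ precisely when $\gamma < 1$. From \cite{ErUe11}, the saddle point $t_n$ satisfies $1 - t_n \sim (\Gamma(1+\gamma)/n)^{\alpha}$, and the natural Poisson approximation to $(C_m)$ has means $\lambda_m = m^{\gamma-1} t_n^m$. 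Setting $s_{p^k} := \sum_{m: p^k \mid m} \lambda_m$, a Riemann-sum computation (substitute $m = p^k j$ and use $\sum_j j^{\gamma-1} e^{-uj} \sim \Gamma(\gamma) u^{-\gamma}$ as $u\downarrow 0$) gives $s_{p^k} \sim K(\gamma)\, n^{\beta}/p^k$ whenever $p^k = o(n^{\alpha})$, with exponential decay beyond; hence the critical prime-power scale is $p^k \asymp n^{\beta}$.

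The next step is to compute $\mathbb{E}_\Theta[\log O_n]$ and $\var_\Theta(\log O_n)$ directly. Using $\mathbb{P}_\Theta[C_m = 0] = h_n(\Theta^{(m)})/h_n(\Theta)$ (with $\Theta^{(m)}$ the modified parameter sequence having $\theta_m = 0$) together with the saddle-point asymptotics for $h_n$ from \cite{ErUe11}, one reduces $\mathbb{E}_\Theta[A_{p,k}]$ to $1 - \exp(-s_{p^k})$ modulo controlled errors. Converting $\sum_{p,k}(\log p)\mathbb{E}_\Theta[A_{p,k}]$ to an integral via $\psi(x) \sim x$ (PNT and partial summation) and substituting $y = K(\gamma) n^{\beta}/x$, combined with careful tracking of subleading constants --- in particular the $\partial_\gamma$-derivative of the leading term $\Gamma(\gamma)(-\log t_n)^{-\gamma}$ in $\Li_{1-\gamma}(t_n)$, which produces the $\Gamma'(\gamma)/\Gamma(\gamma)$ and $\log\Gamma(1+\gamma)$ contributions to $H(n)$ --- yields $\mathbb{E}_\Theta[\log O_n] = G(n) + o(\sqrt{F(n)})$. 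The variance follows analogously from $\sum(\log p)^2 \mathbb{E}_\Theta[A_{p,k}](1 - \mathbb{E}_\Theta[A_{p,k}])$, with the cross-covariances $\mathrm{Cov}(A_{p,k}, A_{p',k'})$ for distinct prime powers shown (again via $h_n$-saddle-point) to be of lower order, reflecting the near-independence of the divisibility conditions modulo $p^k$ and $p'^{k'}$.

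Once $G(n)$ and $F(n)$ are identified, asymptotic normality is obtained by transferring the Gaussian limit from the independent-Poisson model. Split $\log O_n = L^{\le b}_n + L^{> b}_n$ with $b$ chosen just below $n^{\alpha}$; the total-variation bound of Theorem~\ref{thm:algrowth_total_variation} lets us couple $(C_1, \ldots, C_b)$ with independent Poissons $(Z_1, \ldots, Z_b)$, thereby transferring a Lindeberg-Feller CLT (valid for $L^{\le b}_n$ interpreted as a function of the $Z_m$, with individual summands bounded by $\log n \ll \sqrt{F(n)}$) to $\mathbb{P}_\Theta$. The main obstacle is the residual $L^{> b}_n$: Theorem~\ref{thm:algrowth_total_variation} does not apply to cycles longer than $b$, yet such cycles do contribute to $\log O_n$. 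One must show by a direct second-moment saddle-point estimate that $\var_\Theta(L^{>b}_n) = o(F(n))$, so that $L^{>b}_n$ concentrates on a scale finer than the CLT fluctuations. The restriction $\gamma < 1$ is precisely what guarantees $\beta < \alpha$, separating the critical prime-power scale from the typical cycle length and making this splitting feasible; for $\gamma \geq 1$ these scales coincide or cross and the asymptotic regime would have to be reorganized entirely.
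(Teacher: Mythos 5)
Your proposal takes a genuinely different route from the paper. The paper does not decompose $\log O_n$ over prime powers nor transfer a Lindeberg--Feller CLT from the Poisson model; instead it introduces $Y_n := \prod_m m^{C_m}$, computes the moment generating function $\E_{\Theta}[\exp(s\log Y_n)]$ \emph{directly} under $\mathbb{P}_\Theta$ via the cycle-index identity and the saddle-point expansion (Theorem~\ref{thm:algrowth_mom-gen_logYn}), and deduces the CLT for $\log Y_n$ by expanding the exponent around $s=0$ (Lemma~\ref{lem:algrowth_CLT_logY_n}). The total-variation bound of Theorem~\ref{thm:algrowth_total_variation} is then used only to control the error $\log Y_n - \log O_n$, and crucially only for $b = n^{\gamma/(1+\gamma)}\log^2 n$, which is $o(n^{1/(1+\gamma)})$ precisely because $\gamma<1$ (Lemma~\ref{lem:algrowth_closeness}). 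The saddle-point MGF computation automatically encodes the dependence created by the conditioning event $\{\sum_k k Z_k = n\}$ and delivers the correct $G(n)$ and $F(n)$ without any splitting of the cycle process into small and large parts.

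There is a genuine gap in your transfer argument. You split $\log O_n = L_n^{\leq b} + L_n^{>b}$ with $b = o(n^{1/(1+\gamma)})$, apply the TV bound to $L_n^{\leq b}$, and then assert that $\var_{\Theta}(L_n^{>b}) = o(F(n))$. This last claim is false: the fluctuations of $\log O_n$ are concentrated at cycle lengths of order $n^{1/(1+\gamma)}$, exactly the scale at which Theorem~\ref{thm:algrowth_total_variation} forbids the approximation. Concretely, $L_n^{\leq b}$ is a function of $(C_1,\dots,C_b)$, so under the coupling it may be replaced by the corresponding function of $(Z_1,\dots,Z_b)$, and an Efron--Stein bound gives
\[
\var\bigl(L_n^{\leq b}\bigr) \;\leq\; \sum_{m\leq b} (\log m)^2\,\lambda_m \;\asymp\; \frac{b^\gamma}{\gamma}\log^2 b \;=\; o\bigl(n^{\gamma/(1+\gamma)}\log^2 n\bigr) = o\bigl(F(n)\bigr)
\]
whenever $b = o(n^{1/(1+\gamma)})$. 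Thus $L_n^{\leq b}$ has \emph{negligible} variance at the CLT scale, and it is $L_n^{>b}$ that must carry essentially all of $F(n)$. Your proposed Lindeberg--Feller step would therefore produce a degenerate limit for $L_n^{\leq b}/\sqrt{F(n)}$, and the residual does not concentrate --- it is the residual that is Gaussian. This is the same obstruction recorded in Remark~\ref{remark_bound_total_var_distance_too_small}: the large cycle counts contribute non-negligibly to $\log O_n$, and in fact even the centering $\tilde{G}(n)$ produced by the unconditioned Poisson model differs from the true $G(n)$ by $\Theta(n^{\gamma/(1+\gamma)}) \gg \sqrt{F(n)}$. Computing $G(n)$ directly from $h_n$-ratios (as in the first half of your plan) is sound in principle, but the subsequent normality argument as sketched cannot close, because there is no decomposition into a TV-coupled piece plus a concentrating remainder; the Gaussian behavior lives exactly in the regime the coupling cannot reach, which is why the paper has to compute the full MGF of $\log Y_n$ under $\mathbb{P}_\Theta$ by saddle-point rather than pass through the independent Poisson model.
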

In particular, notice that for $0 < \gamma < 1$ there exists constants $c,C>0$ such that
\begin{align*}
c \,n^{\frac{\gamma}{1+\gamma}} \log(n) 
\leq 
\ET{\log(O_n)} 
\leq
C\, n^{\frac{\gamma}{1+\gamma}} \log(n).
\end{align*}
Thus, for our choice of parameters the mean of $\log(O_n)$ is in fact very close to Landau's result \eqref{eq:intro_asymp_Landau}.
Unfortunately, our approach does not work for $\gamma\geq 1$ and thus the behavior in this situation is currently unknown.

Furthermore, though the bound in \eqref{eq:intro_d_bn_for_algrowth} is too small to investigate the whole cycle count process via the independent Poisson process, we will present in Section~\ref{subsection:FCLT} how \eqref{eq:intro_d_bn_for_algrowth} may be used to study the small components by proving a functional version of the Erd\H{o}s-Tur\'an Law.
For $x > 0$ define $x^* := \lfloor x \, n^{\frac{\gamma}{1+\gamma}} \rfloor$ and 
\begin{align}\label{eq:algrowth_brownian_functional}
B_n(x) := \frac{\log(O_{x^*}) -\frac{1}{1+\gamma} \, x^{\gamma} \log(n) \, n^{\frac{\gamma^2}{1+\gamma}}}{\sqrt{\frac{\gamma}{(1+\gamma)^2} \log^2(n) \, n^{\frac{\gamma^2}{1+\gamma}}}},
\end{align}
where $O_{x^*}(\sigma):= \lcm\{m\leq x^*; C_m>0\}$. We will prove the following
\begin{theorem}\label{thm:fclt}
Assume $\theta_m = m^{\gamma}$ with $0 <\gamma < 1$, take $B_n(x)$ as in \eqref{eq:algrowth_brownian_functional} and denote by $\mathcal{W}$ a standard Brownian motion. Then, as $n \rightarrow \infty$ and for $x > 0$, $B_n(x)$ converges weakly to $\mathcal{W}(x^{\gamma})$ .
\end{theorem}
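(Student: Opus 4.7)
The plan is to follow and extend the strategy behind Theorem~\ref{thm:algrowth_CLT_logO_n}, tracking uniformity in the cut-off parameter $x$. The starting point is the prime-power decomposition
\begin{align*}
\log(O_{x^*}) = \sum_{p^k \leq x^*} \log(p) \, \one_{A_{p,k}(x^*)}, \qquad A_{p,k}(x^*) := \{\exists\, m \leq x^*: p^k \mid m,\ C_m > 0\},
\end{align*}
where the sum runs over primes $p$ and integers $k \geq 1$. As in the classical Erd\H{o}s--Tur\'an argument, the contribution of prime powers with $k \geq 2$ is of smaller order by the sparsity of such integers, so the main term is $S_n(x) := \sum_{p \leq x^*} \log(p) \, \one_{A_p(x^*)}$ with $A_p := A_{p,1}$.

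The decisive move is to apply Theorem~\ref{thm:algrowth_total_variation} with truncation level $b = x^* = \floor{x\, n^{\gamma/(1+\gamma)}}$. Since $\gamma < 1$, we have $x^* = o(n^{1/(1+\gamma)})$ uniformly on every bounded $x$-interval, and hence the joint law of $(C_1,\ldots,C_{x^*})$ can be coupled with independent Poissons $(Z_1,\ldots,Z_{x^*})$ at a total variation cost tending to $0$. Under this replacement, $\one_{A_p(x^*)}$ becomes $\one\{U_p(x^*) > 0\}$ with $U_p(x^*) := \sum_{p\mid m,\, m \leq x^*} Z_m$, and the variables $(U_p(x^*))_p$ are independent across primes. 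The saddle-point asymptotics already employed in the proof of Theorem~\ref{thm:algrowth_total_variation} pin down the Poisson intensities $\lambda_m = \mathbb{E}[Z_m]$ uniformly for $m \leq x^*$, and yield
\begin{align*}
\mathbb{P}[A_p(x^*)] = 1 - \exp\Bigl(-\sum_{\substack{m\leq x^* \\ p\mid m}} \lambda_m\Bigr).
\end{align*}

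For the finite-dimensional distributions, fix $0 < x_1 < \cdots < x_r$, set $x_0^* := 0$, and partition the primes into the intervals $(x_{i-1}^*, x_i^*]$. The $p$-indicator contribution to $S_n(x_j)$ with $p \in (x_{i-1}^*, x_i^*]$, $i \leq j$, depends on $x_j$ only through $\sum_{m\leq x_j^*,\, p\mid m} \lambda_m$, and this dependence can be shown to be deterministic up to lower-order random fluctuations. Applying the Lindeberg CLT for triangular arrays of independent bounded summands to the (block-wise independent) sums, and evaluating the variance via the Prime Number Theorem combined with Abel summation, produces the limiting covariance $\mathrm{Cov}(B_n(x_i), B_n(x_j)) \to \min(x_i, x_j)^{\gamma}$, which is precisely the covariance of $\mathcal{W}(x^{\gamma})$. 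The centering in \eqref{eq:algrowth_brownian_functional} is identified from the same computation of $\mathbb{E}[S_n(x)]$.

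Functional convergence on compact $x$-intervals then follows by Billingsley's tightness criterion. An independence-based second-moment estimate of the form $\mathbb{E}\bigl[(B_n(y) - B_n(x))^2\bigr] \leq C\,(y^{\gamma} - x^{\gamma}) + o(1)$ for $x \leq y$, together with a higher-moment or chaining refinement, suffices. The main obstacle will be the fine control of the $x$-dependence of the small-prime indicators: enlarging the cut-off modifies $\one_{A_p(x^*)}$ for every $p \leq x^*$, and one must show that these modifications are absorbed into the deterministic centering rather than contributing extra noise at the Brownian scale. This requires pushing the saddle-point asymptotics for $\lambda_m$ beyond the precision needed for Theorem~\ref{thm:algrowth_CLT_logO_n} and obtaining them uniformly in $m \leq x^*$.
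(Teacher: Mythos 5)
There is a genuine gap, and it stems from working with $\log(O_{x^*})$ directly through the prime-indicator decomposition rather than first passing to $\log(Y_{x^*})$. Your central claim that "the variables $(U_p(x^*))_p$ are independent across primes" after the Poissonization step is false: $U_p(x^*) = \sum_{p\mid m,\,m\leq x^*} Z_m$, and any integer $m$ with two or more prime divisors contributes the same $Z_m$ to several $U_p$'s, so $U_2$ and $U_3$, say, are correlated through $Z_6, Z_{12}, \dots$. With this independence claim gone, the subsequent appeal to a Lindeberg CLT "for triangular arrays of independent bounded summands" and the independence-based second-moment estimate for tightness both lose their footing, and you have already flagged (but not resolved) the further difficulty of controlling how the small-prime indicators change with $x$. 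The route as written does not close.

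The paper sidesteps all of this by never touching primes in the functional part of the argument. Using $\log(Y_{x^*})-\log(O_{x^*})\leq\log(Y_n)-\log(O_n)$ together with Lemma~\ref{lem:algrowth_closeness}, one reduces to proving the Brownian limit for $\log(Y_{x^*}) = \sum_{m\leq x^*}\log(m)\,C_m$, which after applying Theorem~\ref{thm:algrowth_total_variation} with $b=x^*=o(n^{1/(1+\gamma)})$ becomes the sum of \emph{genuinely} independent summands $\log(P_{x^*}) = \sum_{m\leq x^*}\log(m)\,Z_m$. The finite-dimensional limits then follow from a direct characteristic-function computation (no number theory needed, and in particular no PNT: the variance is just $\beta(x^*)=\sum_{m\leq x^*}m^{\gamma-1}t^m\log^2 m$, handled by Euler--Maclaurin), and tightness follows from Billingsley's fourth-moment criterion using independence of the $Z_m$. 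If you want to salvage the prime-indicator route, you would need to control the cross-prime correlations explicitly, e.g.\ via a second-moment argument showing their total contribution is $o$ of the Brownian scale, but this is a substantially harder computation than the one the $Y_n$-substitution makes available.
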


Apart from the behavior of the small cycle counts and the Erd\H{o}s-Tur\'an Law, it might be interesting to study further properties of $\log(O_n)$. We refer the reader to \cite{StZe14b} for more results on the order of random permutations with polynomially growing cycle weights, such as large deviation estimates and local limit theorems, that are new even for the Ewens measure.

\vskip 40pt 
%----------------------------------------------------------------------------
%----------------------------------------------------------------------------

\section{Generalities}
\label{sec_comb_andgen_of_Sn}

We present in this section some facts about the symmetric group $\Sn$, partitions and generating functions. In particular, two useful lemmas, which identify averages over $\Sn$ with generating functions, are recalled. 
We give only a short overview and refer to \cite{Ap84}, \cite{ABT02} and \cite{Mac95} for more details. At the end of this section we present some basic facts about the saddle-point method, which is the main tool we will apply to get our results.

\vskip 15pt 

\subsection{The symmetric group}\label{sec21}

All probability measures and functions considered in this paper are invariant under conjugation and 
it is well known that the conjugation classes of $\Sn$ can be parametrized with partitions of $n$.
This can be seen as follows: Let $\sigma\in \Sn$ be an arbitrary permutation and write $\sigma = \sigma_1\cdots \sigma_\ell$ with $\sigma_i$ disjoint cycles of length $\la_i$.
Since disjoint cycles commute, we can assume that $\la_1\geq\la_2\geq\cdots \geq\la_\ell$.
We call the partition $\la=(\la_1,\la_2,\cdots,\la_\ell)$ the \emph{cycle-type} of $\sigma$ and $\ell = \ell(\la)$ its \emph{length}. Then two elements $\sigma,\tau\in \Sn$ are conjugate if and only if
$\sigma$ and $\tau$ have the same cycle-type. Further details can be found for instance in \cite{Mac95}.
For $\sigma\in \Sn$ with cycle-type $\la$ we define $C_m$, the number of cycles of size $m$, that is 
\begin{align}
\label{eq_def_Cm_lambda}
C_m :=  \#\set{i ;\la_i = m} .
%\quad \text{ and } \quad
%T := T_n := \sum_{m=1}^n C_m.
\end{align}
Recall that $u$ is a class function when it satisfies $u(\sigma)= u(\tau^{-1}\sigma\tau)$ for all $\sigma,\tau\in\Sn$. 
It will turn out that all expectations of interest have the form $\frac{1}{n!} \sum_{\sigma\in \Sn} u(\sigma)$ for a certain class function $u$.
Since $u$ is constant on conjugacy classes, it is more natural to sum over all conjugacy classes. This is the subject of the following lemma.

\begin{lemma}
\label{lem:size_of_conj_classes}
Let  $u: \Sn \to \C$ be a class function, $C_m$ be as in \eqref{eq_def_Cm_lambda} and $\mathcal{C}_\la$ the conjugacy class corresponding to the partition $\la$. We then have
%
%\begin{align*}
%  |\mathcal{C}_\la| = \frac{|\Sn|}{z_\la} 
%\quad \text{ with } \quad
%z_\la:=\prod_{m=1}^{n} m^{C_m}C_m!
%\end{align*}
%%
%and
%
\begin{align*}
  \frac{1}{n!} \sum_{\sigma\in \Sn} u(\sigma)
  =
  \slan u(\mathcal{C}_\la) 
%   \ \text{ with } \
%   z_\la:=\prod_{m=1}^{n} m^{C_m}C_m!
\end{align*}
with $z_\la:=\prod_{m=1}^{n} m^{C_m}C_m!$ and $\sum_{\la \vdash n}$ the sum over all partitions of $n$.
\end{lemma}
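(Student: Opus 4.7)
The plan is to invoke the orbit-stabilizer theorem together with the identification of conjugacy classes of $\Sn$ with partitions of $n$ that was recalled just before the statement. Since $u$ is constant on each conjugacy class $\mathcal{C}_\la$, the double sum collapses:
\begin{align*}
\sum_{\sigma \in \Sn} u(\sigma) = \sum_{\la \vdash n} |\mathcal{C}_\la|\, u(\mathcal{C}_\la),
\end{align*}
so the lemma reduces to the well-known formula $|\mathcal{C}_\la| = n!/z_\la$.

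To establish this cardinality, I would fix a representative $\sigma \in \mathcal{C}_\la$ and apply orbit-stabilizer for the action of $\Sn$ on itself by conjugation: the orbit of $\sigma$ is exactly $\mathcal{C}_\la$, and its stabilizer is the centralizer $C_{\Sn}(\sigma)$, yielding $|\mathcal{C}_\la| = n!/|C_{\Sn}(\sigma)|$. It then suffices to show that $|C_{\Sn}(\sigma)| = z_\la = \prod_{m=1}^n m^{C_m} C_m!$.

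For the centralizer count, I would argue combinatorially. Writing $\sigma$ as a product of disjoint cycles, any $\tau$ commuting with $\sigma$ must permute the cycles of $\sigma$ among themselves while preserving cycle length. Thus $\tau$ is obtained by choosing, for each $m$, a permutation of the $C_m$ cycles of length $m$ (contributing $C_m!$), and, independently, a cyclic rotation within each such cycle (contributing a factor of $m$ per cycle, hence $m^{C_m}$ in total). Multiplying over $m$ gives the claimed formula for $z_\la$.

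There is no real obstacle here; this is a standard structural fact about $\Sn$ and the only point that requires a small combinatorial verification is the description of the centralizer. Combining the orbit-stabilizer identity with $|C_{\Sn}(\sigma)| = z_\la$ and dividing by $n!$ yields the displayed formula $\frac{1}{n!}\sum_{\sigma \in \Sn} u(\sigma) = \slan u(\mathcal{C}_\la)$, completing the proof.
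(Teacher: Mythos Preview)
Your proof is correct and is exactly the standard argument for this classical identity. The paper itself does not supply a proof of this lemma; it states it as a well-known structural fact about $\Sn$ (with a reference to Macdonald's book \cite{Mac95}), so your orbit--stabilizer argument together with the centralizer count is precisely what one would write if a proof were required.
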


\vskip 15pt 

\subsection{Generating functions}\label{subsection:generating_functions}

Given a sequence $(a_{n})_{n\in\N}$ of numbers, one can encode important information about this sequence into a formal power series called the generating series.
\begin{definition}
\label{def_gneranting_function}
Let $(a_n)_{n\in\N}$ be a sequence of complex numbers. We then define
%[Ordinary generating function]
%\begin{itemize}
 %\item
the generating function of $(a_n)_{n\in\N}$ as the formal power series
    \begin{align*}
      G(t) = G(a_n,t) = \sum_{n=0}^\infty a_n t^n.
    \end{align*}
% \item The exponential generating function of $\bigl(g_n \bigr)_{n\in\N}$ is the formal power series
%     \begin{align}
%       EG(t) = EG(g_n,t) = \sum_{n=0}^\infty \frac{g_n}{n!} t^n.
%     \end{align}
%\end{itemize}
%
We define $\nth{G(t)}$ to be the coefficient of $t^n$ of $G(t)$, that is $$\nth{G(t)} := a_n.$$ 
\end{definition}
The reason why generating functions are powerful is the possibility of recognizing them
without knowing the coefficients $a_n$ explicitly. In this case one can try to use tools from analysis to extract information about $a_{n}$, for large $n$, from the generating function. 
% It should be noted that there are several variants in the definition of 
% generating series and we shall use several of them and still call all of them generating series without risk of confusion.

The following lemma goes back to P\'olya and is sometimes called \textit{cycle index theorem}. It links generating functions and averages over $\Sn$.

\begin{lemma}
\label{lem:cycle_index_theorem}
Let $(a_m)_{m\in\N}$ be a sequence of complex numbers. Then
\begin{align*}
\sum_{n=0}^\infty \frac{t^n}{n!} \sum_{\sigma\in\Sn} \prod_{m=1}^{\infty} a_m^{C_m}
%\sla \left(\prod_{m=1}^{\ell(\la)} a_{\la_{m}}\right) t^{|\la|}
= \sla \left(\prod_{m=1}^{\infty} (a_m t^m)^{C_m}\right) 
=
\exp\left(\sum_{m=1}^{\infty}\frac{a_m}{m} t^m\right)
\end{align*}
with the same $z_\la$ as in Lemma~\ref{lem:size_of_conj_classes}.
If one of the sums above is absolutely convergent then so are the others.
\end{lemma}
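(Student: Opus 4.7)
The plan is to derive the three expressions in succession, moving from the leftmost to the rightmost. The first equality follows immediately from Lemma~\ref{lem:size_of_conj_classes} applied to the class function $u(\sigma) = \prod_m a_m^{C_m(\sigma)}$: averaging over $\Sn$ gives $\slan \prod_m a_m^{C_m(\la)}$, and then using the identity $n = \sum_m m\, C_m(\la)$ to rewrite $t^n = \prod_m (t^m)^{C_m(\la)}$ absorbs the factor $t^n$ into the product over $m$. Summing over $n\geq 0$ reorganizes the double sum as an unrestricted sum over all partitions $\la$, which is exactly the middle expression.

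For the second equality, I would parametrize partitions by the tuple $(c_m)_{m\geq 1}$ of non-negative integers with only finitely many nonzero entries, substitute the explicit formula $z_\la = \prod_m m^{c_m} c_m!$, and observe that the middle expression becomes
\begin{align*}
\sum_{(c_m)} \prod_{m=1}^{\infty} \frac{(a_m t^m / m)^{c_m}}{c_m!}.
\end{align*}
Factorizing this formally as a product over $m$ of the exponential series $\sum_{c \geq 0} \frac{(a_m t^m/m)^c}{c!} = \exp(a_m t^m/m)$ and combining the factors yields $\exp\bigl(\sum_m \frac{a_m}{m} t^m\bigr)$, as claimed.

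The only delicate point, which I expect to be the main obstacle, is the justification of the interchange of sum and product and the equivalence of absolute convergence of the three expressions. I would treat both at once by first running the entire derivation with $a_m$ replaced by $|a_m|$: all summands are then non-negative, so Tonelli's theorem on counting measure validates the factorization and produces an unambiguous equality in $[0,\infty]$. Finiteness of any one of the three expressions therefore forces the other two to be finite as well, after which the rearrangement for the original signed identity is justified by dominated convergence. At the level of formal power series no such care is needed, since the coefficient of $t^n$ on each side is a finite sum indexed by partitions of $n$, so the identity holds unconditionally in $\C[[t]]$.
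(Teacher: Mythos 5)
Your proposal is correct and is exactly the direct verification the paper alludes to: it derives the first equality from Lemma~\ref{lem:size_of_conj_classes} together with $n = \sum_m m\,C_m$, obtains the second by substituting the formula for $z_\la$ and factorizing the exponential series, and handles the absolute-convergence claim via Tonelli/dominated convergence, matching the paper's own brief remark. The paper merely cites \cite{Mac95} and says the identity "can be directly verified," so your write-up is a filled-in version of the same argument rather than a different route.
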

\begin{proof}
The proof can be found in \cite{Mac95} or can be directly verified using the definitions of $z_\la$ and the exponential function.
The last statement follows from the dominated convergence theorem.
\end{proof}
The previous lemma implies
\begin{corollary}\label{cor:g_theta_relation_hn}
Define the generating function
\begin{align*}
g_\Theta(t):= \sum_{m=1}^\infty \frac{\theta_m}{m}t^m,
\end{align*}
and let $h_n$ be as in Definition~\ref{def:measure}. Then the following holds
\begin{align}
\label{eq:generating_hn}
\sum_{n=0}^\infty h_n t^n = \exp(g_\Theta(t)).
\end{align}
\end{corollary}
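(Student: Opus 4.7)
The plan is to recognise the corollary as an immediate specialisation of Lemma~\ref{lem:cycle_index_theorem} to the sequence $a_m = \theta_m$, after first expressing $h_n$ as an average over $\Sn$.

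First I would start from the defining identity $\sum_{\sigma \in \Sn} \Pn{\sigma} = 1$ together with the formula in Definition~\ref{def:measure}. This forces
\begin{align*}
h_n \;=\; \frac{1}{n!} \sum_{\sigma \in \Sn} \prod_{m=1}^{n} \theta_m^{C_m(\sigma)},
\end{align*}
which is consistent with $h_0 = 1$ (empty sum/product convention). Since any $\sigma\in\Sn$ has $C_m(\sigma)=0$ for $m>n$, the finite product can harmlessly be extended to $\prod_{m=1}^{\infty}\theta_m^{C_m}$ with the convention $\theta_m^0 = 1$.

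Second I would apply Lemma~\ref{lem:cycle_index_theorem} with the choice $a_m := \theta_m$. The left-hand side of that lemma then becomes exactly $\sum_{n=0}^{\infty} h_n t^n$, while the right-hand side is
\begin{align*}
\exp\!\left(\sum_{m=1}^{\infty} \frac{\theta_m}{m}\, t^m\right) \;=\; \exp\bigl(g_\Theta(t)\bigr),
\end{align*}
by the very definition of $g_\Theta$. This proves \eqref{eq:generating_hn} at the level of formal power series, which is all that is claimed.

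There is really no obstacle here; the corollary is a clean book-keeping consequence of Lemma~\ref{lem:cycle_index_theorem}. The only point worth a remark is that the statement is purely formal: no convergence of either series is asserted, so we need not worry about the radius of convergence of $g_\Theta(t)$ (which for $\theta_m = m^\gamma$ happens to be $1$). If one wished to use the identity analytically later on, absolute convergence on $\{|t|<1\}$ would follow from the last clause of Lemma~\ref{lem:cycle_index_theorem} applied to the absolutely convergent series $\sum_{m\geq 1} m^{\gamma-1} t^m$.
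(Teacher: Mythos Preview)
Your proof is correct and follows exactly the same approach as the paper: the paper's own proof simply states that the identity follows immediately from the definition of $h_n$ in \eqref{eq:def:measure} together with Lemma~\ref{lem:cycle_index_theorem}, which is precisely what you have spelled out.
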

\begin{proof}
This follows immediately from the definition of $h_n$ in \eqref{eq:def:measure} together with Lemma~\ref{lem:cycle_index_theorem}.
\end{proof}

The generating function \eqref{eq:generating_hn} yields expressions for the factorial moments of the cycle counts. 
\begin{lemma}
\label{lem:Cm_expectation}
We have for all $m,k\in\N$,
\begin{align*}
\ET{(C_m)_k} = \left(\frac{\theta_m}{m}\right)^k \frac{h_{n-mk}}{h_n},
\end{align*}
where $(c)_k:=c(c-1)\cdots(c-k+1)$ denotes the Pochhammer symbol. Furthermore,  for $m_1\neq m_2$,
\begin{align*}
\ET{C_{m_1}C_{m_2}}= \frac{\theta_{m_1}}{m_1}\frac{\theta_{m_2}}{m_2}\frac{h_{n-m_1-m_2}}{h_n}.
\end{align*}
\end{lemma}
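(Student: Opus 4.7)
The plan is to read both identities off the generating-function identity of Corollary~\ref{cor:g_theta_relation_hn} by treating the weights as formal variables, differentiating, and then specialising back. Let $a_1, a_2, \ldots$ be indeterminates and apply Lemma~\ref{lem:cycle_index_theorem} in the parametric form
\[
\sum_{n \geq 0} \frac{t^n}{n!} \sum_{\sigma \in \Sn} \prod_{j \geq 1} a_j^{C_j(\sigma)} = \exp\left( \sum_{j \geq 1} \frac{a_j}{j} t^j \right).
\]
The decisive point is that on the right-hand side the dependence on each $a_j$ lives in a single summand of the exponent and is linear there, so derivatives in $a_j$ collapse immediately.

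For the first identity, apply $a_m^k \partial_{a_m}^k$ to both sides. On the left, the formula $a_m^k \partial_{a_m}^k a_m^{C_m} = (C_m)_k\, a_m^{C_m}$ inserts the factor $(C_m)_k$ into the sum over $\sigma$. On the right, $\partial_{a_m}^k$ pulls down $(t^m/m)^k$ from the exponent and leaves the exponential intact; setting $a_j = \theta_j$ for every $j$ then turns the right-hand side into $(\theta_m/m)^k\, t^{mk} \sum_{n \geq 0} h_n t^n$. Extracting $[t^n]$ and dividing by $h_n$ yields the claim, with the convention $h_r := 0$ for $r < 0$.

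For the second identity, with $m_1 \neq m_2$, apply the mixed operator $a_{m_1} a_{m_2}\, \partial_{a_{m_1}} \partial_{a_{m_2}}$. On the left this brings $C_{m_1} C_{m_2}$ into the sum over $\sigma$. On the right, because the exponent is linear and separable in $a_{m_1}$ and $a_{m_2}$, the mixed derivative multiplies the exponential by $\tfrac{t^{m_1}}{m_1}\tfrac{t^{m_2}}{m_2}$; specialising $a_j = \theta_j$ and reading the coefficient of $t^n$ gives the stated formula.

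There is no real obstacle here: both formulas are one-line manipulations of the generating function in Corollary~\ref{cor:g_theta_relation_hn}. The only care needed is to interpret the differentiation formally, as an identity of power series in $t$ with polynomial coefficients in the $a_j$'s, so that exchanging sums and $\partial_{a_m}$ is automatically justified.
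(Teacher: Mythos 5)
Your proof is correct and follows essentially the same route as the paper: both apply $\theta_m^k\partial_{\theta_m}^k$ (respectively the mixed operator $\theta_{m_1}\theta_{m_2}\partial_{\theta_{m_1}}\partial_{\theta_{m_2}}$) to the cycle index identity of Lemma~\ref{lem:cycle_index_theorem} and extract the coefficient of $t^n$. Treating the weights as indeterminates $a_j$ before specialising to $\theta_j$ is a small notational precaution, not a different argument.
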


\begin{proof}
Recall Lemma~\ref{lem:cycle_index_theorem} and set  $a_m=\theta_m$, then differentiate the sum $k$ times with respect to $\theta_m$ and obtain
\begin{align}
\label{eq:Cm_fact_moments_2}
\sum_{n=0}^\infty h_n \ET{(C_m)_k}t^n  
= 
\sum_{n=0}^\infty \frac{t^n}{n!} \sum_{\sigma\in\Sn}  (C_m)_k \prod_{m=1}^{\infty} \theta_m^{C_m}
=
\left(\frac{\theta_m}{m}t^m\right)^k \exp(g_\Theta(t)).
\end{align}
Taking $\nth{.}$ on the left- and right-hand side completes the proof of the first assertion in Lemma~\ref{lem:Cm_expectation}. The proof of the second assertion is similar and we thus omit it.
\end{proof}

\begin{remark}\label{remark:C_b-for-fixed-b-converge-to-indep-Poisson}
It is now easy to see that under the mild condition $\frac{h_{n-1}}{h_n}\to r$ the convergence \eqref{eq:intro_convergence_fixed_b} holds with $\E_{\Theta}[Z_m]= \frac{\theta_m}{m}r^m$; see for instance \cite{ErUe11}.
\end{remark}
 
Typically, Lemma~\ref{lem:Cm_expectation} is used in cases where one can express the quantity of interest in terms of the factorial moments of $C_m$. 
However, in our case it proves simpler to take a different approach, which
 was in particular applied by Hansen \cite{Ha90}. 

Assume for $t>0$ that $G_\Theta(t) := \exp \bigl( g_\Theta(t)\bigr) < \infty$
with $g_\Theta(t)$ as in Corollary~\ref{cor:g_theta_relation_hn}. Then set 
\begin{align*}
  \Omega_t:= \dot{\bigcup}_{n\in\N} \Sn                                                         
\end{align*}
and define for $\sigma\in\Sn$
\begin{align*}
\Pt{\sigma}:=\frac{1}{G_\Theta(t)}\frac{t^{n}}{n!} \prod_{m=1}^{n} \theta_{m}^{C_m}.
\end{align*}
Lemma~\ref{lem:cycle_index_theorem} shows that $\mathbb{P}_{\Theta}^t$ defines a probability measure on $\Omega_t$. 
Furthermore, the $C_m$ are independent and Poisson distributed with $\Et{C_m} = \frac{\theta_m}{m}t^m$.
This follows easily with a calculation similar to the proof of Lemma~\ref{lem:Cm_expectation}.
The following conditioning relation holds:
\begin{align}\label{eq:intro_randomization_Pt}
 \Pt{\,\cdot\,|\,\Sn} = \Pn{\,\cdot\,}.
\end{align}
We also have
\begin{align*}
\Pt{\Sn}
% = 
% t^n h_n \frac{1}{G_\Theta(t)}
=
t^n h_n \exp\bigl(- g_\Theta(t)\bigr),
\end{align*}
which follows immediately from the definition of $h_n$ in \eqref{eq:def:measure}.
Then the law of total probability yields
\begin{lemma}\label{lem:cycle_index2}
Let $t>0$ be given so that $G_\Theta(t)<0$. Suppose that $\Psi: \Omega_t\to\C$ is a random variable with $\Et{|\Psi|}< \infty$ and that $\Psi$ only depends on the cycle counts, i.e. $\Psi = \Psi(C_1,C_2,\dots)$. 
We then have with $\Psi_n:=\Psi\Big|_{\Sn}$
\begin{align*}
\exp\bigl(g_\Theta(t)\bigr) \Et{\Psi} = \sum_{n=1}^{\infty} h_n \En{\Psi_n} t^n + \Psi(0).
\end{align*} 
\end{lemma}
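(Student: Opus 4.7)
The plan is to reduce the identity to a direct decomposition of $\Et{\Psi}$ over the disjoint union $\Omega_t = \dot\bigcup_{n\in\N}\Sn$, using the conditioning relation \eqref{eq:intro_randomization_Pt} and the formula for $\Pt{\Sn}$ recalled just before the lemma. The hypothesis $\Et{|\Psi|}<\infty$ will allow us to split the sum over $\Omega_t$ into a sum over $n$ without worrying about interchange issues.

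First, I would write
\begin{align*}
\Et{\Psi}
= \sum_{n=0}^{\infty}\sum_{\sigma\in\Sn}\Pt{\sigma}\Psi(\sigma)
= \sum_{n=0}^{\infty}\Pt{\Sn}\Et{\Psi\,|\,\Sn},
\end{align*}
which is the law of total probability applied to the countable partition $\{\Sn\}_{n\ge 0}$. By the conditioning relation $\Pt{\,\cdot\,|\,\Sn}=\Pn{\,\cdot\,}$ and the assumption that $\Psi$ depends only on the cycle counts, the conditional expectation equals $\En{\Psi_n}$ for $n\ge 1$, and for $n=0$ the only element of $\mathfrak{S}_0$ contributes $\Psi(0)$.

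Next I would plug in the closed form $\Pt{\Sn}=t^nh_n\exp(-g_\Theta(t))$ (which also covers $n=0$ with the conventions $h_0=1$ and $t^0=1$, giving $\Pt{\mathfrak{S}_0}=\exp(-g_\Theta(t))$ since the empty product equals $1$). Separating the $n=0$ term yields
\begin{align*}
\Et{\Psi}
= \exp(-g_\Theta(t))\,\Psi(0) + \exp(-g_\Theta(t))\sum_{n=1}^{\infty}h_n\,\En{\Psi_n}\,t^n.
\end{align*}
Multiplying both sides by $G_\Theta(t)=\exp(g_\Theta(t))$ gives precisely the claimed identity.

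There is no real obstacle here; the only point requiring a line of justification is that the double sum over $n$ and $\sigma$ may be reorganised by grouping permutations of the same size, and this is legitimate because $\Et{|\Psi|}<\infty$ guarantees absolute convergence of the double series. All other ingredients—the law of total probability, the conditioning identity \eqref{eq:intro_randomization_Pt}, and the formula for $\Pt{\Sn}$—are supplied in the text immediately preceding the lemma.
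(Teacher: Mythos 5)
Your proof is correct and follows exactly the route the paper intends: the text preceding the lemma supplies the conditioning relation \eqref{eq:intro_randomization_Pt} and the formula $\Pt{\Sn}=t^n h_n\exp(-g_\Theta(t))$, and then states that ``the law of total probability yields'' the lemma, which is precisely the decomposition you carry out. (The hypothesis $G_\Theta(t)<0$ in the statement is a typo for $G_\Theta(t)<\infty$, as you implicitly assume.)
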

The previous equation is stated only for fixed $t$, 
but if both sides are complex analytic functions in $t$, then the equation is also valid as formal power series.
If one chooses for instance $\Psi= (C_m)_k$, one gets $\Et{\Psi}= (\theta_m/m)^k t^{mk}$ and thus obtains \eqref{eq:Cm_fact_moments_2}.

In Section~\ref{section:algrowth_total_var_distance} we will compare the distribution of the cycle counts $C_m$ 
under $\mathbb{P}^n_\Theta$ and under $\mathbb{P}^t_\Theta$. To avoid confusion, we will write $Z_m$ instead of $C_m$ if we consider the measure $\mathbb{P}^t_\Theta$. Then the $Z_m$ are independent Poisson random variables with mean $\frac{\theta_m}{m}t^m$. Notice that \eqref{eq:intro_randomization_Pt} implies the so-called \textit{Conditioning Relation}
\begin{align}\label{eq:intro_conditioning relation}
\mathcal{L}\big((C_1, ..., C_n) \big) = \mathcal{L}\Big((Z_1, ..., Z_n)|\sum_{k=1}^n k Z_k =n \Big).
\end{align}
This important relation is necessary for the proof of Theorem~\ref{thm:algrowth_total_variation}.

To demonstrate how to further use the randomization method, let us compute the generating series of a functional of the cycle counts that we will need in Section~\ref{section:erdos-turan-law}. Define on $\Sn$
$$\log(Y_n) := \sum_{m=1}^n  C_m  \log(m).$$
We then have on $\Omega_t$ with the above convention
$$
\log(Y_n) = \sum_{m=1}^n Z_m \log(m).
$$
Since $Z_1, Z_2, ..., Z_n$ are independent Poisson random variables with respective parameters $\frac{\theta_m}{m} t^m$, we obtain
\begin{align*}
\Et{e^{s\log(Y_n)}}
=
\Et{e^{s \sum_{m=1}^n Z_m \log(m)}}
= \exp\Big(\sum_{m=1}^n \frac{\theta_m}{m}t^m(e^{s\log(m)}-1) \Big)
\end{align*}
and then Lemma~\ref{lem:cycle_index2} yields
\begin{align}\label{eq:generating_series_logYn_allgemein}
 \sum_{n=0}^{\infty} h_n \E_{\Theta}[\exp(s \log (Y_n))] t^n  
= \exp\left( \sum_{m=1}^{\infty} \frac{\theta_m}{m^{1 - s}} t^m  \right).
\end{align}
%

%----------------------------------------------------------------------------
\vskip 15pt
\subsection{Saddle point analysis}\label{subsection:saddle-point-analysis}
The asymptotic behavior of random variables on the symmetric group $\Sn$ strongly depends on the analytic properties of $g_{\Theta}$ as defined in Corrolary~\ref{cor:g_theta_relation_hn}. Thus, the appropriate method for studying generating functions involving $g_{\Theta}$  depends on the parameters $\theta_m$.

For our choice $\theta_m = m^{\gamma}$, the function $g_{\Theta}$ belongs to the class of so-called log-admissible functions as defined in \cite[Definition 2.1]{MaNiZe11}. Thus, a suitable method to investigate the behavior of functionals we are interested in is the saddle-point method. 

Consider the generating series
\begin{align*}
\exp(g(t,s)) = \sum_{n=0}^{\infty}G_{n,s}t^n.
\end{align*}
If $g(t,s)$ is $\log$-admissible, then the asymptotics of the coefficients $G_{n,s}$ can be computed explicitly, see Lemma~\ref{lem:algrowth_expansion Gns} below.
\begin{definition}\label{def_log-admissible}
 Let $g(t)=\sum_{n\geq0} g_{n} t^n$ be given with radius of convergence $\rho>0$ and $g_n \geq0$ for all $n$. Then $g(t)$ is called $\log$-admissible if there exist functions $\alpha,\beta,\delta : [0,\rho) \rightarrow \R^+$ and $R: [0, \rho) \times (-\pi/2, \pi/2) \rightarrow \R^+$ with the following properties:
\begin{description}
  \item [Approximation] For all $|\phi| \leq \delta(r)$ the expansion
    \begin{align*}
    g(re^{i\phi}) = g(r) + i\phi \alpha(r) - \frac{\phi^2}{2} \beta(r) + R(r, \phi)
    \end{align*}
    holds, where $R(r, \phi) = o(\phi^3 \delta(r)^{-3})$.
  \item [Divergence] $\alpha(r) \rightarrow \infty$, $\beta(r) \rightarrow \infty$  and $\delta(r) \rightarrow 0$ as $r \rightarrow \rho$.
  \item [Width of convergence] We have $\epsilon \delta^2(r)\beta(r) - \log \beta(r) \rightarrow \infty$ for all $\epsilon > 0$ as $r \rightarrow \rho$.
  \item [Monotonicity] $\Re \big(g(re^{i\phi})\big) \leq \Re \big(g(r e^{\pm i \delta(r)})\big)$ holds for all $|\phi| > \delta(r)$.
 \end{description}
\end{definition}
In Section~\ref{section:erdos-turan-law} we will need to study functions $g$ with an additional dependence on a parameter $s$.
In this case we will use 
\begin{lemma}\label{lem:algrowth_expansion Gns}
Let $I\subset \R$ be an interval and suppose that $g(t,s)$ is a smooth function for $s\in I$ and $|t|\leq \rho$.
Suppose further that $g(t,s)$ is $\log$-admissible in $t$ for all $s\in I$ with associated functions $\alpha_s$, $\beta_s$. Let further $r_{xs}$ be the unique solution of $\alpha_s(r)=x$.
If the requirements of Definition~\ref{def_log-admissible} are fulfilled uniformly in $s$ for $s$ bounded, 
then, as $n \rightarrow \infty$, the following asymptotic expansion holds:
\begin{align*}
G_{n,s}=\frac{1}{\sqrt{2\pi}}(r_{ns})^{-n}\beta_s(r_{ns})^{-1/2} \exp(g(r_{ns}, s)) \big(1+o(1)\big)
\end{align*}
uniformly in $s$ for $s$ bounded.
\end{lemma}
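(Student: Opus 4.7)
The plan is to apply Cauchy's formula on the circle $|t|=r$ with $r=r_{ns}$ and then perform a Hayman-style saddle-point analysis in $\phi$. Writing
\[
G_{n,s} = \frac{1}{2\pi}\, r^{-n} \int_{-\pi}^{\pi} \exp\bigl(g(re^{i\phi},s)\bigr)\, e^{-in\phi}\, d\phi,
\]
I would split the contour into a central arc $|\phi|\le \delta_s(r)$ (with $\delta_s$ as in Definition~\ref{def_log-admissible}, allowed to depend on $s$) and its complement $\delta_s(r)<|\phi|\le \pi$. Note that $r=r_{ns}\to\rho$ as $n\to\infty$ by the Divergence property (since $\alpha_s$ diverges only as $r\to\rho$), and by assumption this convergence is uniform in $s$ on bounded sets.

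\emph{Central arc.} Substituting the Approximation expansion and using $\alpha_s(r_{ns})=n$, the linear term $i\phi\,\alpha_s(r)$ exactly cancels $-in\phi$, leaving the central contribution equal to
\[
\exp\bigl(g(r,s)\bigr) \int_{-\delta_s(r)}^{\delta_s(r)} \exp\Bigl(-\tfrac{\phi^2}{2}\beta_s(r) + R(r,\phi)\Bigr)\, d\phi.
\]
The change of variables $u=\phi\sqrt{\beta_s(r)}$ rescales this to $\beta_s(r)^{-1/2}$ times an integral over $[-\delta_s(r)\sqrt{\beta_s(r)},\,\delta_s(r)\sqrt{\beta_s(r)}]$. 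The Divergence property gives $\delta_s\sqrt{\beta_s}\to\infty$, so the range extends to all of $\R$ at subdominant cost, while the Approximation bound forces $R=o(1)$ on the whole central arc. Dominated convergence then yields $\sqrt{2\pi}$, so the central piece equals $\sqrt{2\pi/\beta_s(r)}\,\exp(g(r,s))\,(1+o(1))$.

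\emph{Tails.} On the complement, the Monotonicity axiom gives $|\exp(g(re^{i\phi},s))|\le \exp\bigl(\Re g(re^{\pm i\delta_s(r)},s)\bigr)$, and the Approximation expansion evaluated at $\phi=\pm\delta_s(r)$ gives $\Re g(re^{\pm i\delta_s(r)},s)=g(r,s)-\tfrac{1}{2}\delta_s(r)^2\beta_s(r)+o(1)$. Hence the tail is bounded by $2\pi\exp(g(r,s))\exp\bigl(-\tfrac{1}{2}\delta_s(r)^2\beta_s(r)(1+o(1))\bigr)$; compared to the central main term this ratio is of order $\beta_s(r)^{1/2}\exp(-\tfrac{1}{4}\delta_s(r)^2\beta_s(r))$, which vanishes by the Width-of-convergence axiom with $\epsilon=1/4$. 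Combining with the prefactor $\tfrac{1}{2\pi}r^{-n}$ delivers the claimed asymptotic.

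Uniformity for $s$ on bounded subsets of $I$ is automatic: each estimate above uses only the four log-admissibility hypotheses, and these hold uniformly in such $s$ by assumption. The main obstacle is really just the bookkeeping of how $\delta_s(r)$, $\beta_s(r)$ and $n$ balance in the various error terms, but each such check reduces to the Width-of-convergence axiom, which is strong enough to absorb both the Gaussian-range extension on the central arc and the off-saddle contribution on the tails; no ingredient beyond the standard Hayman argument is required.
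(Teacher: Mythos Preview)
Your argument is correct and is exactly the standard Hayman saddle-point analysis that the paper invokes by reference (the paper's own proof simply says the argument is analogous to Proposition~2.2 in \cite{MaNiZe11} and that one only has to check uniformity in $s$). One small slip: the fact that $\delta_s(r)\sqrt{\beta_s(r)}\to\infty$ is a consequence of the Width-of-convergence axiom, not the Divergence axiom, but this does not affect the argument.
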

The proof of Lemma~\ref{lem:algrowth_expansion Gns} is analogue to the proof of Proposition 2.2 in \cite{MaNiZe11}; one simply has to verify that all involved expression are uniform in $s$. This is straightforward and we thus omit the details.

\begin{remark}
\label{rem:saddle_point}
It is often difficult to find the exact solution of $\alpha_s(r)=x$, 
fortunately it is enough to find $r_{ns}$ with 
\begin{align}
\label{eq:saddle_solution}
 \alpha_s(r_{ns}) = n +o\left(\sqrt{\beta_s(r_{ns})}  \right), 
\end{align}
since then the contribution of the error term is negligible in the limit.
\end{remark}

Let us apply this method to study the asymptotic behavior of $h_n$ as defined in Definition~\ref{def:measure}. Recall Corollary~\ref{cor:g_theta_relation_hn}, which states that
$$h_n = [t^n] \exp(g_{\Theta}(t)).$$
We have to show that $g_{\Theta}$ is log-admissible. This will be proved in a more general way in Lemma~\ref{lem:algrowth_g(r,s)_is_log_adm} in Section~\ref{subsection:algrowth_preliminaries}. Then Lemma~\ref{lem:algrowth_expansion Gns} yields

\begin{corollary}
Let $g_{\Theta}$ be as in Corollary~\ref{cor:g_theta_relation_hn} with $\theta_m = m^{\gamma}$, $\gamma >0$. Then
\begin{align}
h_n 
= & \, \big(2\pi \Gamma(2+\gamma)\big)^{-\frac{1}{2}} \Big(\frac{\Gamma(1+\gamma)}{n} \Big)^{\frac{2+\gamma}{2(1+\gamma)}}  \times \nonumber\\
&\exp\bigg(\frac{1+\gamma}{\gamma} \,\Gamma(1+\gamma)^{\frac{1}{1+\gamma}} \, n^{\frac{\gamma}{1+\gamma}} + \zeta(1-\gamma) \bigg)\big(1+o(1)\big). \label{eq:intro_algrowth_h_n}
\end{align}
\end{corollary}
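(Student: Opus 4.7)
The strategy is a direct application of Lemma~\ref{lem:algrowth_expansion Gns} (in its parameter-free form) to the generating function $\exp(g_\Theta(t))=\sum_{n\geq 0}h_n t^n$ of Corollary~\ref{cor:g_theta_relation_hn}, where $g_\Theta(t)=\sum_{m\geq 1} m^{\gamma-1}t^m$. The first step is to certify that $g_\Theta$ is log-admissible in the sense of Definition~\ref{def_log-admissible}; this is exactly what Lemma~\ref{lem:algrowth_g(r,s)_is_log_adm} in Section~\ref{subsection:algrowth_preliminaries} will establish, so I would simply cite it.

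Next I would recognise $g_\Theta=\Li_{1-\gamma}$ and invoke the classical Mellin-type expansion of the polylogarithm: for $r=e^{-\mu}$ with $\mu\downarrow 0$ and $\gamma\notin\mathbb{Z}_{>0}$,
\begin{equation*}
  \Li_{s}(e^{-\mu})=\Gamma(1-s)\,\mu^{s-1}+\sum_{k\geq 0}\frac{\zeta(s-k)}{k!}(-\mu)^k.
\end{equation*}
Specialising to $s=1-\gamma,\,-\gamma,\,-1-\gamma$ yields
\begin{align*}
 g_\Theta(r)&=\Gamma(\gamma)\mu^{-\gamma}+\zeta(1-\gamma)+O(\mu),\\
 \alpha(r):=r g_\Theta'(r)&=\Gamma(1+\gamma)\mu^{-1-\gamma}\bigl(1+O(\mu^{1+\gamma})\bigr),\\
 \beta(r):=r\alpha'(r)&=\Gamma(2+\gamma)\mu^{-2-\gamma}\bigl(1+O(\mu^{2+\gamma})\bigr).
\end{align*}

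By Remark~\ref{rem:saddle_point} it suffices to choose any $r_n$ satisfying $\alpha(r_n)=n+o(\sqrt{\beta(r_n)})$; the natural candidate $\mu_n:=(\Gamma(1+\gamma)/n)^{1/(1+\gamma)}$ with $r_n:=e^{-\mu_n}$ clearly qualifies. Substituting into the formula of Lemma~\ref{lem:algrowth_expansion Gns} and using $\Gamma(\gamma)=\Gamma(1+\gamma)/\gamma$,
\begin{align*}
 r_n^{-n}&=\exp(n\mu_n)=\exp\!\bigl(\Gamma(1+\gamma)^{1/(1+\gamma)}\,n^{\gamma/(1+\gamma)}\bigr),\\
 \exp(g_\Theta(r_n))&=\exp\!\Bigl(\tfrac{1}{\gamma}\Gamma(1+\gamma)^{1/(1+\gamma)}\,n^{\gamma/(1+\gamma)}+\zeta(1-\gamma)\Bigr)\bigl(1+o(1)\bigr),\\
 \beta(r_n)^{-1/2}&=\Gamma(2+\gamma)^{-1/2}\bigl(\Gamma(1+\gamma)/n\bigr)^{(2+\gamma)/(2(1+\gamma))}\bigl(1+o(1)\bigr).
\end{align*}
Adding the two exponents gives the coefficient $1+1/\gamma=(1+\gamma)/\gamma$ on the leading term, and multiplying by the $(2\pi)^{-1/2}$ from Lemma~\ref{lem:algrowth_expansion Gns} reproduces \eqref{eq:intro_algrowth_h_n} exactly.

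The only genuine bookkeeping point is error propagation: the first correction $-\zeta(-\gamma)\mu_n$ in $g_\Theta(r_n)$ is $O(n^{-1/(1+\gamma)})=o(1)$, so after exponentiation it contributes only a factor $1+o(1)$, and the subleading terms in $\alpha$ and $\beta$ are controlled similarly. In that sense the corollary itself is routine; the real work is in the deferred verification of the four axioms of Definition~\ref{def_log-admissible} for polylogarithmic $g_\Theta$, which is the main obstacle hidden behind this statement.
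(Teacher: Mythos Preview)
Your proposal is correct and follows precisely the paper's own route: the paper simply notes that the corollary is the special case $s=0$ of the saddle-point computation carried out in the proof of Theorem~\ref{thm:algrowth_mom-gen_logYn}, which is exactly the polylogarithm expansion plus Lemma~\ref{lem:algrowth_expansion Gns} that you spell out. Your choice of $\mu_n=(\Gamma(1+\gamma)/n)^{1/(1+\gamma)}$ and the resulting evaluations of $r_n^{-n}$, $g_\Theta(r_n)$ and $\beta(r_n)$ coincide with the paper's formulas line by line.
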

\begin{proof}
This is a special case of the proof of Theorem~\ref{thm:algrowth_mom-gen_logYn} in Section~\ref{subsection:algrowth_preliminaries}.
\end{proof}

\begin{remark} \label{rem:complete_asymptotic}	
We will need for the proof of the rate of convergence in Theorem~\ref{thm:algrowth_total_variation} a more precise asymptotic expansion for $G_{n,s}$ than the one in Lemma~\ref{lem:algrowth_expansion Gns}.
This can be obtained by taking into account more error terms in the $\phi-$expansion of $g(t,s)$ at $t=r$. Often one can indeed obtain a complete asymptotic expansion. 
The details are explained for instance in \cite[Chapter VIII]{FlSe09}.
For us this means that if 
\begin{align*}
 R(r, \phi) = c_n(r) \phi^3 + O\left(d_n(r) \phi^4\right)  
\end{align*}
then the $o(1)$ error-term in Theorem~\ref{thm:algrowth_total_variation} is
\begin{align*}
% O\bigl(d_n(r_{ns})/\beta_s(r_{ns})^{2} +c_n(r_{ns})^2/\beta_s(r_{ns})^{3}  \bigr)\\
O\left(\frac{d_n(r_{ns})}{\beta_s(r_{ns})^{2}} +\frac{c_n(r_{ns})^2}{\beta_s(r_{ns})^{3}}  \right).
\end{align*}
Applying this to $h_n$ gives
\begin{align}
h_n 
= & \, \big(2\pi \Gamma(2+\gamma)\big)^{-\frac{1}{2}} \Big(\frac{\Gamma(1+\gamma)}{n} \Big)^{\frac{2+\gamma}{2(1+\gamma)}}  \times \nonumber\\
&\exp\bigg(\frac{1+\gamma}{\gamma} \,\Gamma(1+\gamma)^{\frac{1}{1+\gamma}} \, n^{\frac{\gamma}{1+\gamma}} + \zeta(1-\gamma) \bigg)\big(1+O(n^{-\frac{\gamma}{1+\gamma}})\big). 
\label{eq:intro_algrowth_h_n2}
\end{align}
\end{remark}

%---------------------------------------------------------------------------
%---------------------------------------------------------------------------
\vskip 15pt 
\subsection{Asymptotics}
We recall the asymptotic behavior of several functions that we will encounter frequently throughout the paper.
The upper incomplete gamma function is defined as
\begin{align*}
\Gamma(a,y) := \int_y^{\infty} x^{a-1}\,e^{-x}\,dx
\end{align*}
and satisfies
\begin{align} \label{eq:intro_asym_Gamma_0}
 \Gamma(a,y) &= \Gamma(a) - \frac{1}{a} y^{a} + \Sigma_{2}(a,y) 
 \quad \text{ as } y \rightarrow 0,
 \end{align}
with 
\begin{align}
\label{eq:def_sigma}
\Sigma_j(a,y) = \sum_{k=j}^{\infty}(-1)^k \frac{y^{k-1+a}}{(k-1)! (k-1+a)}
 \end{align}

 and 
\begin{align} \label{eq:intro_asym_Gamma_infty}
 \Gamma(a,y) &= e^{-y} y^{a-1} (1+O(1/y)) ,
 \quad \text{ as } y \rightarrow \infty.
\end{align}
Furthermore, the Error function is defined as 
\begin{align*}
\erf(x) := \frac{2}{\sqrt{\pi}}\int_{0}^x e^{-t^2}\, dt
\end{align*}
and satisfies
\begin{align}\label{eq:intro_erf_infty}
\erf(x) = 1 + O(x^{-1} e^{-x^2}) \quad \text { as } x \rightarrow \infty
\end{align}
and 
\begin{align}\label{eq:intro_erf_minus_infty}
\erf(x) = -1 + O(x^{-1} e^{-x^2}) \quad \text { as } x \rightarrow -\infty.
\end{align}
Recall that the polylogarithm $\Li_{a}$ with parameter $a$ is defined as
\begin{align*}
\Li_a(t):= \sum_{k=1}^{\infty} \frac{t^k}{k^a}.
\end{align*}
Its radius of convergence is $1$ and as $t \rightarrow 1$ it satisfies the following asymptotic for $a \notin \{1, 2, ... \}$ 
(see \cite[Theorem VI.7]{FlSe09}):
\begin{align}\label{eq:algrowth_asypm_polylog}
\Li_{a}(t)
\sim \Gamma(1-a)(-\log (t))^{a -1} + \sum_{j \geq 0} \frac{(-1)^j}{j!}\xi(a - j) (-\log (t))^j .
\end{align}
In particular, for $a < 1$, \eqref{eq:algrowth_asypm_polylog} implies for $t \rightarrow 1$
\begin{align}\label{eq:algrowth_asypm_polylog_2}
 \Li_{a}(t) 
=  \Gamma(1-a)(-\log (t))^{a-1} +\zeta(a) +O(t-1).
\end{align} 
Finally, recall the Euler-Maclaurin formula
\begin{align}\label{eq:intro_euler_maclaurin}
\sum_{j=1}^b f(j)
= \int_1^b f(x) dx + \int_1^b (x-\lfloor x \rfloor)f'(x) dx + f(b)(b-\lfloor b \rfloor)  .
\end{align}

%----------------------------------------------------------------------------
%----------------------------------------------------------------------------
\vskip 40pt

\section{Total variation distance}\label{section:algrowth_total_var_distance}

This section is devoted to the proof of Theorem~\ref{thm:algrowth_total_variation}. Recall the randomization method we considered at the end of Section~\ref{subsection:generating_functions}, and the independent Poisson random variables $Z_m$ with mean $\frac{\theta_m}{m}t^m$ we introduced there. Define 
\begin{align*}
T_{\ell k} := \sum_{m=\ell+1}^k m \, Z_m.
\end{align*}
and recall also that the Conditioning Relation \eqref{eq:intro_conditioning relation} holds. We rewrite it as
\begin{align}\label{eq:tot_var_dist_cond_rel}
\mathcal{L}\big((C^n_1, C^n_2, ..., C^n_n)\big) = \mathcal{L}\big((Z_1, Z_2, ..., Z_n) | T_{0n}=n\big).
\end{align}
Recall that we denote by $d_b(n)$ the total variation distance
\begin{align}\label{eq:tot_var_dist_d_bn}
d_b(n) = d_{TV}\big(\mathcal{L}(C_1^n, C_2^n, ..., C_b^n), \mathcal{L}(Z_1, Z_2, ..., Z_b) \big) .
\end{align}
We have to prove that
\begin{align*}
d_b(n) \rightarrow 0
\quad \text{ if and only if }\quad  b = o(n^{\frac{1}{1+\gamma}}).
\end{align*}
Given the Conditioning Relation \eqref{eq:tot_var_dist_cond_rel}, Lemma 1 in \cite{ArTa92} gives a formula that reduces the total variation distance of two vectors to the distance of two one-dimensional random variables:
\begin{align}\label{eq:tot_var_dist_d_bn_one_dim_rv}
d_b(n) = d_{TV}\big(\mathcal{L}(T_{0b}),\mathcal{L}(T_{0b}|T_{0n}=n)\big).
\end{align}
Then $d_b(n) \rightarrow 0 $ implies that conditioning on the event $\{T_{0n}=n \}$ does not change the distribution of $T_{0b}$ very much, which is indeed the case when $\{T_{0n}=n \}$ is relatively likely. Recall that for uniform random permutations \eqref{eq:intro_d_bn_for_Ewens} holds; in this setting, one can compute that $\PTt{T_{0n}=n}$ is approximately $n^{-1}$ for $n$ large enough. For polynomially growing weights, $\PTt{T_{0n}=n}$ is approximately $n^{-1 + \frac{\gamma}{2(1+\gamma)}}$ for $n$ large enough, which means that the event $\{T_{0n}=n\}$ is even more likely. Thus, at a first glance it seems promising to compare the distributions of $(C_1^n, C_2^n, ..., C_b^n)$ and $(Z_1, Z_2, ..., Z_b)$.

When $\Sn$ is equipped with the uniform or Ewens measure, not only the Conditioning Relation \eqref{eq:tot_var_dist_cond_rel} holds, but additionally the approximating random variables $Z_m$ satisfy the so-called \textit{Logarithmic Condition} 
\begin{align}\label{eq:tot_var_dist_log_con}
m \, \mathbb{E}[Z_m] \rightarrow \vartheta, \quad \quad \text{ as } m \rightarrow \infty.
\end{align}
Many well known combinatorial objects which decompose into elementary components (permutations decompose into cycles, graphs into connected components, polynomials into irreducible factors) satisfy the Conditioning Relation and the Logarithmic Condition (see \cite[Chapter 2]{ABT02} for
a comprehensive overview of examples of logarithmic and non-logarithmic combinatorial structures). 
%Note that the Conditioning Relation by itself, even without the Logarithmic Condition, is a powerful tool to describe the distribution of the component structure. However, both features together are sufficient to develop a unifying method that proves for a large class of combinatorial objects a universal behavior of their components. Namely that the numbers of the small components are almost independent, with distributions approximated by those of the $Z_m$. This is not necessarily true for objects satisfying \eqref{eq:tot_var_dist_cond_rel} but not \eqref{eq:tot_var_dist_log_con}. For example, set partitions decompose into blocks and the $C_m$ count the number of blocks of size $m$. The component process $\mathcal{C}(n)$ satisfies \eqref{eq:tot_var_dist_cond_rel} for Poisson distributed random variables $Z_m$ with means $\mathbb{E}[Z_m] = 1/m!$. Thus \eqref{eq:tot_var_dist_log_con} is not satisfied and indeed the distribution of the $C_m$ is not well approximated by that of $Z_m$, see \cite[Chapter 2]{ABT02}.
%
%In general, any object satisfying \eqref{eq:tot_var_dist_cond_rel} and \eqref{eq:tot_var_dist_log_con} is called a logarithmic combinatorial structure. 
For this class of objects, Arratia et al. \cite{ArBaTa00} developed a unified approach to study the total variation distance \eqref{eq:tot_var_dist_d_bn} only using the Conditioning Relation and the Logarithmic Condition. By the independence of the random variables $Z_m$, Arratia and Tavar\'e \cite{ArTa92} rewrite the right-hand side of \eqref{eq:tot_var_dist_d_bn_one_dim_rv} as 
\begin{align}\label{eq:algrowth_d_TV}
d_b(n)
&= \sum_{k\geq 0} \big(\PTt{T_{0b}=k} - \PTt{T_{0b}=k | T_{0n}=n} \big)^+ \nonumber\\
&= \sum_{k\geq 0} \PTt{T_{0b}=k}\bigg(1 - \frac{\PTt{T_{bn}=n-k}}{\PTt{T_{0n}=n}} \bigg)^+ .
\end{align}
The key to the the analysis of the accuracy of the approximation is some local limit approximation of the distribution of $T_{bn} = \sum_{m=b+1}^n m \, Z_m$.  In \cite{ArBaTa00} it is shown that the Logarithmic Condition ensures that $n^{-1} T_{bn} \rightarrow X_{\vartheta}$ in distribution, where $X_{\vartheta}$ is a random variable only depending on $\vartheta$ and $b = o(n)$. Via this limiting behavior they establish
$$k \mathbb{P}[T_{bn}=k] \sim \vartheta \, \mathbb{P}[k-n \leq T_{bn} \leq k-b],$$
which provides the required local limit approximation. Then their main result (\cite[Theorem 3.1]{ArBaTa00}) is that for all combinatorial structures satisfying \eqref{eq:tot_var_dist_cond_rel} and \eqref{eq:tot_var_dist_log_con}, considered with respect to the Ewens measure, the following holds: 
$$d_b(n) = d_{TV}\big(\mathcal{L}(C_1^n, C_2^n, ..., C_b^n), \mathcal{L}(Z_1, Z_2, ..., Z_b) \big) \rightarrow 0 \quad \text{ for } b = o(n).$$
%
%
%Now let us turn from general logarithmic combinatorial structures to random permutations. Recall from the introduction that for random permutations with respect to the Ewens measure, the $Z_m$ are independent Poisson random variables with mean $\mathbb{E}[Z_m]=\vartheta / m$ and thus the Logarithmic Condition is satisfied. Long time before estimates for $d_b(n)$ for general logarithmic combinatorial structures were available, the total variation distance of the law of the cycle count process of random permutations and independent Poisson random variables was studied.

%For the uniform measure, $d_b(n) \leq 2b/n$ was proved in 1990 by Barbour \cite{Ba90}. This bound may be much improved. In 1992 Arratia and Tavar\'e \cite{AT92b} showed that $d_b(n) \rightarrow 0$ if and only if $b = o(n)$.
%In particular, if $b = o(n)$, then  $d_b(n) \rightarrow 0$ superexponentially fast relative to $n/b$. The extension of these results to the Ewens measure is straightforward, but superexponential decay of $d_b(n)$ is only attained for $\vartheta =1$, for parameters $\vartheta \neq 1$ we have $d_b(n) = O(b/n)$, see Arratia et al. \cite[Theorem 6]{ArBaTa92}.
%

In this paper we consider random permutations with respect to a weighted measure with parameters $\theta_m = m^{\gamma}$. As mentioned before, the Feller coupling is not available in this situation. Recall Remark~\ref{remark:C_b-for-fixed-b-converge-to-indep-Poisson} and the estimate for $h_n$ given in \eqref{eq:intro_algrowth_h_n}. This implies that the convergence in \eqref{eq:intro_convergence_fixed_b} holds, where the $Z_m$ are independent Poisson random variables with mean 
$$\ETt{Z_m} = \frac{\theta_m}{m} t^m = m^{\gamma-1} t^m,$$
and
\begin{align}\label{eq:algrowth_t_eta}
t = \exp(-\eta_{\gamma}) \quad \text{ with } \quad \eta_{\gamma} = \Big( \frac{n}{\Gamma(1+\gamma)}\Big)^{-\frac{1}{1+\gamma}} .
\end{align}
%Furthermore, recall that the randomization method introduced in Section~\ref{subsection:generating_functions} yields that the Conditioning Relation \eqref{eq:tot_var_dist_cond_rel} holds in our setting, see \eqref{eq:intro_conditioning relation}. 
Unfortunately, the Logarithmic Condition \eqref{eq:tot_var_dist_log_con} is clearly not satisfied, and thus a different approach is needed to prove Theorem~\ref{thm:algrowth_total_variation}. The starting point is equation \eqref{eq:algrowth_d_TV}. We will show that $T_{0b}$, properly rescaled, can be approximated by a Gaussian random variable $G_{0b}$ with appropriately chosen mean and variance. This enables us to prove that the sum $\sum \PTt{T_{0b}=k}$ converges to zero outside a small interval around the mean of $T_{0b}$. Within this interval, we will show that the quotient $\PTt{T_{bn}=n-k}/ \PTt{T_{0n}=n}$ converges to $1$. 
Let us first compute 
$$\mu_{0b}:=\ETt{T_{0b}}, \,\, \mu_{bn}:=\ETt{T_{bn}}, \,\, \sigma^2_{0b}:=\VTt{T_{0b}} \,\, \text{ and } \,\, \sigma^2_{bn}:=\VTt{T_{bn}}.$$ 
\begin{lemma}\label{lem:mu_0b and others}
Recall $\Sigma_2$ as in \eqref{eq:def_sigma}. For $b = o(n^{\frac{1}{1+\gamma}})$ we have 
\begin{enumerate}
\item 
$\mu_{0b} 
 = \frac{1}{1+\gamma} b^{1+\gamma} - \frac{n}{\Gamma(1+\gamma)} \Sigma_2(1+\gamma,b\eta_{\gamma}) + O(b^{\gamma}),$ \\
 \item 
 $\sigma^2_{0b} 
 = \frac{1}{2+\gamma} b^{2+\gamma} - \big(\frac{n}{\Gamma(1+\gamma)}\big)^{\frac{2+\gamma}{1+\gamma}} \Sigma_2(2+\gamma,b\eta_{\gamma}) + O(b^{1+\gamma}),$\\
 \item
 $\mu_{bn} 
= n - \frac{1}{1+\gamma} b^{1+\gamma} + \frac{n}{\Gamma(1+\gamma)} \Sigma_2(1+\gamma,b\eta_{\gamma}) + O(n^{\frac{\gamma}{1+\gamma}}),$\\
\item
$\sigma^2_{bn} 
 = \frac{1+\gamma}{\Gamma(1+\gamma)^{\frac{1}{1+\gamma}}} n^{\frac{2+\gamma}{1+\gamma}} - \frac{1}{2+\gamma} b^{2+\gamma} + \big(\frac{n}{\Gamma(1+\gamma)}\big)^{\frac{2+\gamma}{1+\gamma}} \Sigma_2(2+\gamma,b\eta_{\gamma}) + O(n).$
\end{enumerate}
\end{lemma}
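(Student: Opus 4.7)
The plan is to reduce the four quantities to single-index sums of the form $\sum m^{j}t^{m}$, and then to attack these using the Euler-Maclaurin formula \eqref{eq:intro_euler_maclaurin}, the small-argument expansion \eqref{eq:intro_asym_Gamma_0} of the incomplete gamma function, and the polylogarithm asymptotic \eqref{eq:algrowth_asypm_polylog_2}. Since each $Z_{m}$ is Poisson with mean $m^{\gamma-1}t^{m}$, independence together with $\var(mZ_{m})=m^{2}\ETt{Z_{m}}$ yields
\begin{align*}
\mu_{0b}&=\sum_{m=1}^{b} m^{\gamma}t^{m}, & \sigma^{2}_{0b}&=\sum_{m=1}^{b} m^{1+\gamma}t^{m},\\
\mu_{bn}&=\sum_{m=b+1}^{n} m^{\gamma}t^{m}, & \sigma^{2}_{bn}&=\sum_{m=b+1}^{n} m^{1+\gamma}t^{m},
\end{align*}
with $t=e^{-\eta_{\gamma}}$ and $\eta_{\gamma}^{-(1+\gamma)}=n/\Gamma(1+\gamma)$.

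For (i) and (ii) I would apply \eqref{eq:intro_euler_maclaurin} to $f(x)=x^{\gamma}e^{-x\eta_{\gamma}}$, respectively $f(x)=x^{1+\gamma}e^{-x\eta_{\gamma}}$. Since $b=o(n^{1/(1+\gamma)})$ forces $b\eta_{\gamma}=o(1)$, the exponential factor is bounded on $[1,b]$, and $|f'(x)|$ is of order $x^{\gamma-1}$, respectively $x^{\gamma}$, making the Euler-Maclaurin remainder $O(b^{\gamma})$, respectively $O(b^{1+\gamma})$. The main integral becomes an incomplete gamma function through the substitution $u=x\eta_{\gamma}$:
\begin{align*}
\int_{0}^{b}x^{a-1}e^{-x\eta_{\gamma}}\,dx=\eta_{\gamma}^{-a}\bigl(\Gamma(a)-\Gamma(a,b\eta_{\gamma})\bigr);
\end{align*}
and since $b\eta_{\gamma}\to 0$, \eqref{eq:intro_asym_Gamma_0} gives $\Gamma(a)-\Gamma(a,b\eta_{\gamma})=\frac{(b\eta_{\gamma})^{a}}{a}-\Sigma_{2}(a,b\eta_{\gamma})$. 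Inserting $a=1+\gamma$ (respectively $a=2+\gamma$) together with $\eta_{\gamma}^{-(1+\gamma)}=n/\Gamma(1+\gamma)$ and $\eta_{\gamma}^{-(2+\gamma)}=(n/\Gamma(1+\gamma))^{(2+\gamma)/(1+\gamma)}$ reproduces exactly the expressions in (i) and (ii).

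For (iii) and (iv) I would split $\mu_{bn}=\Li_{-\gamma}(t)-\mu_{0b}-\sum_{m>n}m^{\gamma}t^{m}$, and analogously for $\sigma^{2}_{bn}$, using that the full series $\sum_{m\ge 1}m^{j}t^{m}$ is the polylogarithm $\Li_{-j}(t)$. Applying \eqref{eq:algrowth_asypm_polylog_2} with $a=-\gamma$ and $a=-1-\gamma$ gives
\begin{align*}
\Li_{-\gamma}(t)&=\Gamma(1+\gamma)\eta_{\gamma}^{-(1+\gamma)}+O(1)=n+O(1),\\
\Li_{-1-\gamma}(t)&=(1+\gamma)\Gamma(1+\gamma)^{-1/(1+\gamma)}\,n^{(2+\gamma)/(1+\gamma)}+O(1).
\end{align*}
The leftover tails $\sum_{m>n}m^{j}t^{m}\le \eta_{\gamma}^{-(j+1)}\Gamma(j+1,n\eta_{\gamma})$ are super-exponentially small by \eqref{eq:intro_asym_Gamma_infty}, since $n\eta_{\gamma}=\Gamma(1+\gamma)^{1/(1+\gamma)}\,n^{\gamma/(1+\gamma)}\to\infty$. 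Subtracting the formulas (i) and (ii) from these polylogarithm expansions then delivers (iii) and (iv).

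No single step is conceptually deep; the main work will be error bookkeeping. I will need to check that the three sources of error---Euler-Maclaurin remainders of order $b^{\gamma}$ or $b^{1+\gamma}$, polylogarithm remainders of order $O(1)$, and super-exponentially small tails beyond $n$---all fit inside the stated $O$-terms. This reduces to the observation that $b=o(n^{1/(1+\gamma)})$ forces $b^{\gamma}=o(n^{\gamma/(1+\gamma)})$ and $b^{1+\gamma}=o(n)$, so the Euler-Maclaurin errors are absorbed by the $O(n^{\gamma/(1+\gamma)})$ of (iii) and the $O(n)$ of (iv), while the constants from $\zeta(-\gamma)$ and $\zeta(-1-\gamma)$ coming from \eqref{eq:algrowth_asypm_polylog_2} disappear into the same error terms.
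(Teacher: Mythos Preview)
Your proposal is correct and follows essentially the same route as the paper. For (i) and (ii) the argument is identical: Euler--Maclaurin reduces the sum to $\int_{1}^{b}x^{\gamma+j}e^{-x\eta_{\gamma}}\,dx$, the substitution $u=x\eta_{\gamma}$ produces an incomplete gamma function, and \eqref{eq:intro_asym_Gamma_0} yields the claimed expansion with remainder $O(b^{\gamma})$ resp.\ $O(b^{1+\gamma})$. For (iii) and (iv) the paper simply says ``the computations for $T_{bn}$ are analogues'' and records $\mu_{0b}+\mu_{bn}=\mu_{0n}=n+O(1)$; your explicit use of the polylogarithm asymptotic \eqref{eq:algrowth_asypm_polylog_2} together with the super-exponential tail bound via \eqref{eq:intro_asym_Gamma_infty} is exactly what underlies that remark, just spelled out in full.
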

\begin{proof}
Recall \eqref{eq:intro_asym_Gamma_0}, \eqref{eq:intro_asym_Gamma_infty} and \eqref{eq:intro_euler_maclaurin}.
Then $\mu_{0b} = \ETt{T_{0b}}$ is given by
\begin{align*}
\mu_{0b} = \sum_{k=1}^b \theta_k t^k = \sum_{k=1}^b k^{\gamma} t^k
\end{align*}
and  (\ref{eq:intro_euler_maclaurin}) yields
\begin{align*}
\sum_{k=1}^b k^{\gamma}t^k
&= \int_1^b x^{\gamma}t^x dx + \gamma\int_1^b (x-\lfloor x \rfloor) x^{\gamma-1}t^x dx  \\
&+ \log(t)\int_1^b (x-\lfloor x \rfloor) x^{\gamma}t^x dx+ b^{\gamma}t^b (b-\lfloor b \rfloor)  .
\end{align*}
For the first integral, set $t = \exp(-\eta_{\gamma})$ with $\eta_{\gamma}$ as in (\ref{eq:algrowth_t_eta}). With a variable substitution $y = x \eta_{\gamma}$, we obtain
\begin{align*}
\int_1^b x^{\gamma} e^{-x\eta_{\gamma} } dx
&= \frac{n}{\Gamma(1+\gamma)} \int_{\eta_{\gamma}}^{b\eta_{\gamma}} y^{\gamma} e^{-y} dy \nonumber\\
&= \frac{n}{\Gamma(1+\gamma)} \big(\Gamma(1+\gamma, \eta_{\gamma}) - \Gamma(1+\gamma, b\eta_{\gamma}) \big) \nonumber\\
& = \frac{1}{1+\gamma} b^{1+\gamma} - \frac{n}{\Gamma(1+\gamma)} \Sigma_2(1+\gamma,b\eta_{\gamma}) + O(1),
\end{align*}
where the last step follows from (\ref{eq:intro_asym_Gamma_0}) and $b = o(n^{\frac{1}{1+\gamma}})$. 
For the remaining terms one can show that they are of order $O(b^{\gamma})$, which yields assertion $(1)$.
Similarly, 
\begin{align*}
\sigma^2_{0b} = \sum_{k=1}^b k \theta_k t^k = \sum_{k=1}^b k^{1+\gamma} t^k
\end{align*}
and by \eqref{eq:intro_euler_maclaurin}
\begin{align*}
\sigma^2_{0b} 
&= \int_1^b x^{\gamma+1} e^{-x \eta_{\gamma}} dx + O(b^{1+\gamma}) \nonumber\\
&= \Big(\frac{n}{\Gamma(1+\gamma)}\Big)^{\frac{2+\gamma}{1+\gamma}} \big(\Gamma(2+\gamma, \eta_{\gamma}) - \Gamma(2+\gamma, b\eta_{\gamma}) \big) + O(b^{1+\gamma}) \nonumber\\
& = \frac{1}{2+\gamma} b^{2+\gamma} - \Big(\frac{n}{\Gamma(1+\gamma)}\Big)^{\frac{2+\gamma}{1+\gamma}} \Sigma_2(2+\gamma,b\eta_{\gamma}) + O(b^{1+\gamma}),
\end{align*}
proving $(2)$. The computations for $T_{bn}$ are analogues. In particular, notice that
\begin{align}\label{eq:tot_var_distance_mean_bn=n-mean_0b}
\mu_{0b} + \mu_{bn} = \mu_{0n} =  n + O(1).
\end{align}
The proof is complete.
\end{proof}
%
%
%
%\begin{lemma}\label{lem:algrowth_gen_function_T0b}
%\begin{align*}
 %\ETt{u^{T_{0b}}} = \exp\Big(\sum_{k=1}^b\frac{\theta_k}{k}t^k(u^k-1)\Big)
%\end{align*}
%and for $0 \leq k \leq b$,
%\begin{align*}
 %\PTt{T_{0b}=k} = h_k t^k e^{-S(b)}, 
 %
 %\quad \text{ where } S(b):=\sum_{j=1}^b \frac{\theta_j}{j} t^j .
%\end{align*}
%\end{lemma}
%
%\begin{proof}
%
%Apparently,
%\begin{align*}
 %\E_{\Theta}^t[u^{T_{0b}}] 
%= \prod_{k=1}^b \ETt{u^{k Z_k}} 
%= \exp\Big(\sum_{k=1}^b\frac{\theta_k}{k}t^k(u^k-1)\Big).
%\end{align*}
%Now
%\begin{align*}
 %\mathbb{P}_{\Theta}^t[T_{0b}=k] = [u^k] \, \E_{\Theta}^t[u^{T_{0b}}] 
%\end{align*}
%
%and thus for $k \leq b$ and $S(b):=\sum_{j=1}^b \frac{\theta_j}{j} t^j$,
%\begin{align*}
%\mathbb{P}_{\Theta}^t[T_{0b}=k] 
%&= [u^k]  \frac{\exp(g_{\Theta}(ut))}{\exp(S(b))} 
%
%= [u^k] \sum_{k=1}^{\infty} \frac{h_k t^k}{\exp(S(b))}u^k 
% 
%= h_k t^k \exp(-S(b)).
%\end{align*}
%\end{proof}
%
For two real random variables $X$ and $Y$ with distributions $\mu$ and $\nu$, recall that the Kolmogorov distance $d_K(X,Y)$ is defined by
$$d_K(X,Y) := d_K(\mu,\nu) 
:= \sup_{x \in \R} |\mathbb{P}(X \leq x) - \mathbb{P}(Y \leq x)|.$$
Now define for $x=1+ \frac{\gamma}{3}$
\begin{align}\label{eq:T_bn^x}
T_{0b}^x := \frac{T_{0b}}{ b^{x}},
\quad 
\mu_{0b^x}  
 :=  \frac{\mu_{0b}}{b^x}
\quad \text{ and } \quad
 \sigma_{0b^x} 
 := \frac{\sigma_{0b}}{b^{x}}.
\end{align}
\begin{lemma}\label{lem:algrowth_total_variation_lemma_T_0b_Kol_distance_G_0b}
 Assume $b = o(n^{\frac{1}{1+\gamma}})$ and let $G_{0b}$ be a Gaussian random variable with mean $\mu_{0b^x}$ and variance $\sigma_{0b^x}$. Then
\begin{align}\label{eq:total_var_distance_d_K}
d_K(T^x_{0b}, G_{0b}) =  O\big(\sigma_{0b^x}^{-1}\big) = O\big(b^{-\gamma/6}\big).
\end{align}
\end{lemma}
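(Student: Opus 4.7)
The plan is to invoke the Berry--Esseen inequality for sums of independent (not necessarily identically distributed) random variables, applied to $T_{0b}=\sum_{m=1}^{b}m Z_m$, where the $Z_m$ are independent Poisson with parameter $\lambda_m:=m^{\gamma-1}t^m$. Since the Kolmogorov distance is invariant under a common affine rescaling,
\[
d_K(T^x_{0b},G_{0b}) \;=\; d_K\!\left(\frac{T_{0b}-\mu_{0b}}{\sigma_{0b}},\,\mathcal{N}(0,1)\right),
\]
so it suffices to show that the classical Berry--Esseen bound
\[
d_K(T^x_{0b},G_{0b}) \;\le\; \frac{C}{\sigma_{0b}^{3}}\sum_{m=1}^{b}m^{3}\,\E\bigl|Z_m-\lambda_m\bigr|^3
\]
is of the stated order.

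Next, I would bound the Poisson third absolute moment uniformly by $\E|Z_m-\lambda_m|^3 \le C(\lambda_m+\lambda_m^{3/2})$; this follows from the Cauchy--Schwarz inequality $\E|X|^3 \le (\E X^2)^{1/2}(\E X^4)^{1/2}$ combined with the explicit identity $\E(Z_m-\lambda_m)^4=\lambda_m+3\lambda_m^2$. The assumption $b=o(n^{1/(1+\gamma)})$, together with the formula $\eta_\gamma \asymp n^{-1/(1+\gamma)}$ from \eqref{eq:algrowth_t_eta}, gives $b\eta_\gamma=o(1)$; hence $t^m=e^{-m\eta_\gamma}$ stays between two positive constants for $m\le b$, and therefore $\lambda_m\asymp m^{\gamma-1}$ throughout the summation range.

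A sum--integral comparison via the Euler--Maclaurin formula \eqref{eq:intro_euler_maclaurin} then produces $\sum_{m\le b}m^3\lambda_m\asymp b^{\gamma+3}$ and $\sum_{m\le b}m^3\lambda_m^{3/2}\asymp b^{(3\gamma+5)/2}$. Combined with Lemma~\ref{lem:mu_0b and others}(2), which gives $\sigma_{0b}\sim c_\gamma\, b^{(\gamma+2)/2}$, the Berry--Esseen ratio becomes $O(b^{-\gamma/2})+O(b^{-1/2})$. Finally, the choice $x=1+\gamma/3$ is made precisely so that
\[
\sigma_{0b^x}^{-1} \;=\; \frac{b^x}{\sigma_{0b}} \;\asymp\; b^{\,(1+\gamma/3)-(\gamma+2)/2} \;=\; b^{-\gamma/6},
\]
and comparing exponents confirms both equalities in the statement.

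The main delicate point is the uniform control of $\E|Z_m-\lambda_m|^3$ across the range of $m$: for $\gamma\le 1$ one has $\lambda_m\le 1$ throughout, so only the $\lambda_m$ piece of the bound contributes, whereas for larger $\gamma$ the parameter $\lambda_m=m^{\gamma-1}$ can grow with $m$ and the $\lambda_m^{3/2}$ piece takes over; carrying a single estimate $\lambda_m+\lambda_m^{3/2}$ that handles both regimes is what keeps the argument clean. Once that moment bound is in hand, the remainder of the proof is routine summation and algebra.
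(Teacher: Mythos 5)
Your Berry--Esseen route is genuinely different from the paper's argument, which instead verifies mod-Gaussian convergence of $T_{0b}^x$ and cites the Berry--Esseen-type consequence of that framework (Remark~3 of [KoNi09a]). Your approach is more elementary and self-contained: it avoids the external mod-Gaussian machinery entirely, reduces the problem to the classical Lyapunov--Berry--Esseen bound for sums of independent (non-identically distributed) random variables, and the moment estimate $\E|Z-\lambda|^3\le C(\lambda+\lambda^{3/2})$ together with $\lambda_m\asymp m^{\gamma-1}$ for $m\le b$ (valid since $b\eta_\gamma=o(1)$) is the right one. The affine-invariance reduction and the asymptotic $\sigma_{0b}^2\sim b^{2+\gamma}/(2+\gamma)$ from Lemma~\ref{lem:mu_0b and others}(2) are also correct.

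There is, however, a subtle discrepancy in the final exponent comparison that you wave away with ``comparing exponents confirms both equalities.'' Your Berry--Esseen ratio is $O(b^{-\gamma/2})+O(b^{-1/2})=O(b^{-\min(\gamma,1)/2})$. This is $O(b^{-\gamma/6})$, hence consistent with \eqref{eq:total_var_distance_d_K}, precisely when $\min(\gamma,1)/2\ge\gamma/6$, i.e.\ when $0<\gamma\le 3$; in that range your bound is in fact strictly sharper than the stated $O(\sigma_{0b^x}^{-1})=O(b^{-\gamma/6})$ except at $\gamma=3$. But for $\gamma>3$ your bound $O(b^{-1/2})$ is strictly weaker than the claimed $O(b^{-\gamma/6})$, so as written the proof does not establish the lemma for that part of the parameter range; the mod-Gaussian argument in the paper yields $O(\sigma_{0b^x}^{-1})$ uniformly in $\gamma>0$. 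Since the paper only ever invokes this lemma with $0<\gamma<1$ (in the proof of Theorems~\ref{thm:algrowth_CLT_logO_n} and~\ref{thm:fclt}), your argument suffices and even improves the rate there, but you should flag that the stated rate $O(b^{-\gamma/6})$ is not recovered for $\gamma>3$ by this method.
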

\begin{proof}
We will show that $T_{0b}^x$ is mod-Gaussian convergent with parameters $\mu_{0b^x}$ and $\sigma_{0b^x}$ (see \cite[Definition 1.1]{JaKoNi09} for the definition of mod-Gaussian convergence).
Then the assertion of the lemma is a direct consequence of  \cite[Remark 3]{KoNi09a}.

The characteristic function of $T_{0b}$ is given by
\begin{align*}
 \E_{\Theta}^t[e^{isT_{0b}}] 
&= \exp\Big(\sum_{k=1}^b\frac{\theta_k}{k}t^k(e^{isk}-1)\Big)\\
&= \exp\Big( is\sum_{k=1}^b \theta_k t^k - \frac{s^2}{2} \sum_{k=1}^b k \theta_k t^k - \frac{i s^3}{6} \sum_{k=1}^b k^2 \theta_k t^k + O\big(s^4\sum_{k=1}^b k^3\theta_k t^k \big)\Big)
\end{align*}
and we need to find an appropriate scaling such that the third term converges to a constant and the error term converges to zero. In Lemma~\ref{lem:mu_0b and others} we have given 
\begin{align*}
\mu_{0b} = \sum_{k=1}^b \theta_k t^k 
\quad \text{ and } \quad
\sigma^2_{0b} = \sum_{k=1}^b k\theta_k t^k .
\end{align*}
Similarly, we compute 
\begin{align*}
\sum_{k=1}^b k^2 \theta_k t^k 
&= \int_1^b x^{\gamma+2} e^{-x \eta_{\gamma}} dx + O(b^{2+\gamma})\\
&= \Big( \frac{n}{\Gamma(1+\gamma)}\Big)^{\frac{3+\gamma}{1+\gamma}} \big(\Gamma(3+\gamma, \eta_{\gamma}) - \Gamma(3+\gamma, b\eta_{\gamma}) \big)\\
& = \frac{1}{3+\gamma} b^{3+\gamma} - \Big(\frac{n}{\Gamma(1+\gamma)}\Big)^{\frac{3+\gamma}{1+\gamma}} \Sigma_2(3+\gamma,b\eta_{\gamma}) + O(1)
\end{align*}
and
\begin{align*}
\sum_{k=1}^b k^3\theta_k t^k
= O(b^{4+\gamma}).
\end{align*}
We therefore have to rescale by $s^x = s/b^x$ such that $b^{3+\gamma -3x}$ converges to a constant. Thus, choose $x = 1+ \frac{\gamma}{3}$, then for $T_{0b}^x := T_{0b}/b^x$ we get
\begin{align*}
 \E_{\Theta}^t[e^{isT_{0b}^x}] 
= \exp\Big( is \mu_{0b^x} - \frac{s^2}{2} \sigma_{0b^x}^2 - \frac{is^3}{6} \delta_{0b^x}+ O\big(s^4 b^{-\frac{\gamma}{3}}\big)\Big)
\end{align*}
where $ \mu_{0b^x}  = \mu_{0b} / b^x $, $\sigma_{0b^x}^2 = \sigma^2_{0b} / b^{2x}$
and 
 \begin{align*}
 \delta_{0b^x}
& = b^{-3(1+\frac{\gamma}{3})}  \sum_{k=1}^b k^{\gamma+2} t^k 
=  \frac{1}{3+\gamma} + O(b n^{-\frac{1}{1+\gamma}}).
\end{align*}
This completes the proof.
\end{proof}
With these preliminary results at hand, we are prepared to prove Theorem~\ref{thm:algrowth_total_variation}.
\begin{proof}[Proof of Theorem~\ref{thm:algrowth_total_variation}]
Assume first $b = o(n^{\frac{1}{1+\gamma}})$ and recall equation \eqref{eq:algrowth_d_TV}. Since the $(..)^+$-term in \eqref{eq:algrowth_d_TV} satisfies $(..)^+ \leq 1$, we want to find $\alpha, \beta $ such that both sums
\begin{align*}
\sum_{k=0}^{\alpha} \mathbb{P}_{\Theta}^t [T_{0b}=k] 
\quad \text{ and } \quad
\sum_{k=\beta}^{\infty} \mathbb{P}_{\Theta}^t [T_{0b}=k]
\end{align*}
converge to zero. Recall the definition of $T_{0b}^x$, $\mu_{0b^x}$ and $\sigma_{0b^x}$ in \eqref{eq:T_bn^x}. As in Lemma~\ref{lem:algrowth_total_variation_lemma_T_0b_Kol_distance_G_0b}, denote by $G_{0b}$ a Gaussian random variable with mean $\mu_{0b^x}$ and standard deviation $\sigma_{0b^x}$. Let $g$ be any function with $g(b) \rightarrow \infty$ as $b \rightarrow \infty$ and define 
$$\epsilon_b := \sigma_{0b} \, g(b).$$
Then, as $n \rightarrow \infty$, we have
\begin{align*}
\mathbb{P}_{\Theta}^t \big[\mu_{0b} - \epsilon_b \leq T_{0b}\leq \mu_{0b} + \epsilon_b\big] \rightarrow 1. 
\end{align*}
To see this, notice that for $\epsilon_b^x := \epsilon_b / b^x$,
\begin{align*}
&\mathbb{P}_{\Theta}^t \big[\mu_{0b} - \epsilon_b \leq T_{0b}\leq \mu_{0b} + \epsilon_b\big]  \\
=&\, \mathbb{P}_{\Theta}^t \big[\mu_{0b^x} - \epsilon^x_b \leq T^x_{0b}\leq \mu_{0b^x} + \epsilon^x_b\big] \\
=&\, \mathbb{P}_{\Theta}^t \big[\mu_{0b^x} - \epsilon^x_b \leq G_{0b}\leq \mu_{0b^x} + \epsilon^x_b\big] + O\big(d_K(T_{0b}^x,G_b)\big) .
\end{align*}
Now Lemma~\ref{lem:algrowth_total_variation_lemma_T_0b_Kol_distance_G_0b} yields $d_K(T_{0b}^x,G_b) = O\big(b^{-\gamma/6}\big).$ By basic properties of the Gaussian distribution, 
\begin{align*}
\mathbb{P}_{\Theta}^t \big[\mu_{0b^x} - \epsilon^x_b\leq G_{0b}\leq \mu_{0b^x} + \epsilon^x_b\big] 
&= \frac{1}{2} \Big(\erf\Big(\frac{\epsilon^x_b}{\sqrt{2}\sigma_{0b^x}}\Big) - \erf\Big(-\frac{\epsilon^x_b}{\sqrt{2}\sigma_{0b^x}}\Big)  \Big) \\
&= \frac{1}{2} \Big(\erf\Big(\frac{g(b)}{\sqrt{2}}\Big) - \erf\Big(-\frac{g(b)}{\sqrt{2}}\Big)  \Big) ,
\end{align*}
where $\erf (x)$ denotes the error function, which satisfies the asymptotics \eqref{eq:intro_erf_infty} and \eqref{eq:intro_erf_minus_infty}. 
Thus, as $n \rightarrow \infty$, for all $g$ with $g(b) \rightarrow \infty$, 
\begin{align}\label{eq:tot_var_distance_T_0b_gegen_null}
\mathbb{P}_{\Theta}^t \big[\mu_{0b} - \epsilon_b \leq T_{0b}\leq \mu_{0b} + \epsilon_b\big]
= 1 + O\big( g^{-1}(b) e^{-g^2(b)} + b^{-\gamma/6}\big)
\end{align}
and therefore both sums
\begin{align*}
\sum_{k=0}^{\mu_{0b} - \epsilon_b} \mathbb{P}_{\Theta}^t [T_{0b}=k] 
\quad \text{ and } \quad
\sum_{k= \mu_{0b} + \epsilon_b}^{\infty} \mathbb{P}_{\Theta}^t [T_{0b}=k] 
\end{align*}
are of order $O\big( g^{-1}(b) e^{-g^2(b)} + b^{-\gamma/6}\big)$.
Next, in view of \eqref{eq:algrowth_d_TV}, we have to show that the sum
\begin{align}\label{eq:total_var_distance_last_sum}
\sum_{k= \mu_{0b} - g(b) \, \sigma_{0b}}^{ \mu_{0b} + g(b) \, \sigma_{0b}} \mathbb{P}_{\Theta}^t [T_{0b}=k] \bigg(1 - \frac{\PTt{T_{bn}=n-k}}{\PTt{T_{0n}=n}} \bigg)^+ 
\end{align}
converges to zero. Recall \eqref{eq:tot_var_distance_mean_bn=n-mean_0b} : $\mu_{bn} = n - \mu_{0b}$, and denote $I_b := [-g(b) \, \sigma_{0b}, g(b) \, \sigma_{0b}]$. Then we can rewrite the previous sum as 
\begin{align}\label{eq:tot_var_sup}
\eqref{eq:total_var_distance_last_sum} 
&= \sum_{j \in I_b} \mathbb{P}_{\Theta}^t [T_{0b}=\mu_{0b} -j] \bigg(1 - \frac{\PTt{T_{bn}=\mu_{bn}+j}}{\PTt{T_{0n}=n}} \bigg)^+ \nonumber\\
&\leq \sup_{j \in I_b} \bigg(1 - \frac{\PTt{T_{bn}=\mu_{bn} + j}}{\PTt{T_{0n}=n}} \bigg)^+ 
\end{align}
and we have to show that this term converges to $0$. 

Let us first give an heuristic argument why this should be true. First, one can show that
$$\frac{\PTt{T_{bn}=\mu_{bn} + j}}{\PTt{T_{0n}=n}} \rightarrow 1
\quad \text{ if and only if } \quad
\frac{\PTt{T_{bn}=\mu_{bn} + j}}{\PTt{T_{bn}=\mu_{bn}}} \rightarrow 1.$$ Similarly to \eqref{eq:T_bn^x} and Lemma~\ref{lem:algrowth_total_variation_lemma_T_0b_Kol_distance_G_0b}, we can show that $T_{bn}^y := T_{bn} / n^y$ with $y = \frac{3+\gamma}{3(1+\gamma)}$ is approximately Gaussian with mean $\mu^y_{bn}:= \mu_{bn}/n^y$ and standard deviation $\sigma^y_{bn}:= \sigma_{bn}/n^y$. Thus, vaguely, let us consider for a moment that $T_{bn}$ is approximately (a discrete version of a) Gaussian random variable $G_{bn}$ with mean $\mu_{bn}$ and variance $\sigma^2_{bn}$. Then, for $\delta = o(j)$, the question is for which $j$ the following holds: 
$$\PTt{\mu_{bn} + j \leq G_{bn} \leq \mu_{bn} + j + \delta} 
\sim \PTt{\mu_{bn} \leq G_{bn} \leq \mu_{bn} + \delta}.$$
By the standard properties of the Gaussian distribution, this holds for any $j = o(\sigma_{bn})$. Thus, the crucial point why \eqref{eq:tot_var_sup} should converge to zero is that
\begin{align}\label{eq:tot_var_sigma_0b_klein_o_sigma_bn}
|j| \leq g(b) \, \sigma_{0b} = o(\sigma_{bn}).
\end{align}
We have $\sigma_{0b} = o(\sigma_{bn})$ and since $g(b) \rightarrow \infty$ may be chosen arbitrarily this implies $g(b) \, \sigma_{0b} = o(\sigma_{bn})$:
$$\frac{g(b) \, \sigma_{0b}}{\sigma_{bn}} = O \big(b^{\frac{2+\gamma}{2}} n^{- \frac{2+\gamma}{2(1+\gamma)}} g(b) \big)$$
and now choose 
\begin{align}\label{eq:tot_var_distance_gb}
g(b) := \big( n^{\frac{1}{1+\gamma}} b^{-1} \big)^{\frac{\gamma}{2}}
\end{align}
to get
$$\frac{g(b) \, \sigma_{0b}}{\sigma_{bn}} = O \big(b n^{- \frac{1}{1+\gamma}} \big) \rightarrow 0.$$

For the rigorous proof that \eqref{eq:tot_var_sup} converges to $0$, we compute $\PTt{T_{bn}=\mu_{bn} + j}$ explicitly by means of saddle-point analysis. We have,
\begin{align*}
 \E_{\Theta}^t[u^{T_{bn}}] 
= \prod_{k=b+1}^n \ETt{u^{k Z_k}} 
= \exp\Big(\sum_{k=b+1}^n \frac{\theta_k}{k}t^k(u^k-1)\Big),
\end{align*}
where $t$ is as in \eqref{eq:algrowth_t_eta}. Now, for $m \leq n$,
\begin{align*}
 \mathbb{P}_{\Theta}^t[T_{bn}=m] 
 &=  e^{- S_{b}(t)} t^m [u^m] \exp\Big(\sum_{k=b+1}^n k^{\gamma -1} u^k\Big)  \\
  &=  e^{- S_{b}(t)} t^m [u^m] \exp\Big(\sum_{k=b+1}^{\infty} k^{\gamma -1} u^k\Big),
\end{align*}
where $S_{b}(t):=\sum_{k=b+1}^n k^{\gamma-1} t^k$. Notice that $\mu_{bn} + j \leq n$ for $n$ large with the above chosen $g(n)$. In particular, for $b =0$ and $m=n$, we get
\begin{align*}
 \mathbb{P}_{\Theta}^t[T_{0n}=n] 
 =  e^{- S_{0}(t)} t^n [u^n] \exp\big(g_{\Theta}(u)\big) 
  =  e^{- S_0(t)} t^n h_n,
\end{align*}
where $h_n$ is as in \eqref{eq:intro_algrowth_h_n}. For $b \neq 0$, to prove that $g_{\Theta,b}(u) := \sum_{k=b+1}^{\infty} k^{\gamma -1} u^k$ is log-admissible one proceeds along the same lines as in the proof of Lemma~\ref{lem:algrowth_g(r,s)_is_log_adm}. The leading term of the saddle point solution
$$\alpha(r_m) = m + o(\sqrt{\beta(r_m)})$$
is given by $r_m = \exp(-v_m)$ with 
\begin{align*}
v_m =  \Big(\frac{m + \mu_{0b}}{\Gamma(1+\gamma)}\Big)^{- \frac{1}{1+\gamma}}.
\end{align*}
Lemma~\ref{lem:algrowth_expansion Gns} together with Remark~\ref{rem:complete_asymptotic} yields
\begin{align*}
[u^{m}] \exp\Big(\sum_{k=b+1}^{\infty} k^{\gamma -1} u^k\Big)
&= \frac{1}{\sqrt{2 \pi \beta(r_{m})}}  \exp\big(g_{\Theta,b}(r_{m}) + m \, v_m\big) \big(1+ O(n^{-\frac{\gamma}{1+\gamma}} )\big).
\end{align*}
Thus we have
\begin{align*}
& \frac{ \mathbb{P}_{\Theta}^t[T_{bn}=m] }{ \mathbb{P}_{\Theta}^t[T_{0n}=n] } \\
= &\, \frac{t^{m-n}}{h_n \, \sqrt{2 \pi \beta(r_m)}}  \exp\Big(g_{\Theta,b}(r_{m}) + m\, v_m + \sum_{k=1}^b k^{\gamma-1}t^k\Big)  \big(1+ O(n^{-\frac{\gamma}{1+\gamma}} )\big).
\end{align*}
Recall $t= \exp(- \eta_{\gamma})$ with $\eta_{\gamma} = \big(\frac{n}{\Gamma(1+\gamma)}\big)^{-\frac{1}{1+\gamma}}$. Let us first compute $\beta(r_{m})$. Similarly as in the proof of Lemma~\ref{lem:mu_0b and others}, we have
\begin{align*}
\beta(r_{m}) 
&= \sum_{k=b+1}^{\infty} k^{\gamma+1} r_{m}^{k}
= \sum_{k=1}^{\infty} k^{\gamma+1} r_{m}^{k} - \sum_{k=1}^{b} k^{\gamma+1} r_{m}^{k} \\
&= \Li_{-\gamma-1}(r_m) -  \int_{1}^{b} x^{\gamma+1} e^{-x v_{\widetilde{m}}} dx + O(b^{1+\gamma}) \\
&= \Gamma(2+\gamma) v_m^{-(2+\gamma)} + O\big( b^{2+\gamma} \big).
\end{align*}
Together with $h_n$ as in \eqref{eq:intro_algrowth_h_n} we get
\begin{align*}
\frac{ \mathbb{P}_{\Theta}^t[T_{bn}=m] }{ \mathbb{P}_{\Theta}^t[T_{0n}=n] }
= H_{\Theta,b}(r_{m},t)\exp\big(G_{\Theta,b}(r_{m},t) \big) \big(1+ O(n^{-\frac{\gamma}{1+\gamma}} ) \big),
\end{align*}
with 
\begin{align*}
H_{\Theta,b}(r_{m},t)
 = \big(v_m^{-(2+\gamma)} + O(b^{2+\gamma})\big)^{-1/2} \Big(\frac{n}{\Gamma(1+\gamma)} \Big)^{\frac{2+\gamma}{2(1+\gamma)}}
\end{align*}
and 
\begin{align*}
G_{\Theta,b}(r_{m},t)
 = g_{\Theta,b}(r_{m}) + m\, (v_m - \eta_{\gamma}) + \sum_{k=1}^b k^{\gamma-1}t^k - \frac{n\, \eta_{\gamma}}{\gamma} - \zeta(1-\gamma).
\end{align*}
Recall that we are interested in $\widetilde{m} := \mu_{bn} +j$ with $j \in I_b$ so that
\begin{align*}
v_{\widetilde{m}} 
=  \Big(\frac{n + j}{\Gamma(1+\gamma)}\Big)^{- \frac{1}{1+\gamma}} 
= \eta_{\gamma} -  \frac{ \Gamma(1+\gamma)^{\frac{1}{1+\gamma}} }{1+\gamma} j n^{-\frac{2+\gamma}{1+\gamma}} + O\big(j^2 n^{-\frac{3+2\gamma}{1+\gamma}}\big).
\end{align*}
Thus,
\begin{align*}
H_{\Theta,b}(r_{\widetilde{m}},t)
& = \bigg(\Big(\frac{n +j}{\Gamma(1+\gamma)} \Big)^{-\frac{2+\gamma}{2(1+\gamma)}} + O\big(n^{-\frac{3(2+\gamma)}{2(1+\gamma)}}b^{2+\gamma}\big)\bigg) \Big(\frac{n}{\Gamma(1+\gamma)} \Big)^{\frac{2+\gamma}{2(1+\gamma)}} \\
& = 1+  O\Big( j n^{-1} + n^{-\frac{2(2+\gamma)}{2(1+\gamma)}}b^{2+\gamma}\Big),
\end{align*}
and the error term converges to zero since for $g(b)$ as in \eqref{eq:tot_var_distance_gb} we have  $|j| \leq g(b) \, \sigma_{0b} = o(n)$.
It remains to compute $G_{\Theta,b}(r_{\widetilde{m}},t)$. First notice that
\begin{align*}
\widetilde{m}(v_{\widetilde{m}} - \eta_{\gamma})
= - \mu_{bn}  \frac{ \Gamma(1+\gamma)^{\frac{1}{1+\gamma}} }{1+\gamma} j n^{-\frac{2+\gamma}{1+\gamma}}
+ O\big(j^2 n^{-\frac{2+\gamma}{1+\gamma}} \big),
\end{align*}
 and $\mu_{bn} = n -\mu_{0b}$. Furthermore, 
\begin{align*}
g_{\Theta,b}(r_{m})
&= \sum_{k=b+1}^{\infty} k^{\gamma-1} r_{m}^{k}
= \sum_{k=1}^{\infty} k^{\gamma-1} r_{m}^{k} - \sum_{k=1}^{b} k^{\gamma-1} r_{m}^{k} \\
&=\Li_{1-\gamma}(r_m) - \sum_{k=1}^{b} k^{\gamma-1} r_{m}^{k},
\end{align*}
where $\Li$ denotes the polylogarithm as in \eqref{eq:algrowth_asypm_polylog} and 
\begin{align*}
t^k - r_{\widetilde{m}}^{k} 
= e^{-k \, \eta_{\gamma}} 
\bigg( 1-\exp\Big(  \frac{ \Gamma(1+\gamma)^{\frac{1}{1+\gamma}} }{1+\gamma} kj n^{-\frac{2+\gamma}{1+\gamma}} + O(j^2 n^{-\frac{3+2\gamma}{1+\gamma}}) \Big) \bigg).
\end{align*}
Then for $k\leq b$ we have $k j n^{-1-\frac{1}{1+\gamma}} = o(1)$ and this yields
\begin{align*}
t^k - r_{\widetilde{m}}^{k} 
= e^{-k \, \eta_{\gamma}} 
\bigg( -  \frac{ \Gamma(1+\gamma)^{\frac{1}{1+\gamma}} }{1+\gamma} kj n^{-\frac{2+\gamma}{1+\gamma}}
+ O(k j^2 n^{-\frac{3+2\gamma}{1+\gamma}}) \bigg).
\end{align*}
Thus,
\begin{align*}
\sum_{k=1}^{b} k^{\gamma-1}(t^k - r_{\widetilde{m}}^{k}) 
&= -  \frac{ \Gamma(1+\gamma)^{\frac{1}{1+\gamma}} }{1+\gamma} j n^{-\frac{2+\gamma}{1+\gamma}} \sum_{k=1}^{b} k^{\gamma}t^k 
+ O \bigg(j^2 n^{-\frac{3+2\gamma}{1+\gamma}}\sum_{k=1}^{b} k^{\gamma}t^k \bigg)\\
 &= -  \frac{ \Gamma(1+\gamma)^{\frac{1}{1+\gamma}} }{1+\gamma} j n^{-\frac{2+\gamma}{1+\gamma}} \mu_{0b} 
+ O \bigg(j^2 n^{-\frac{3+2\gamma}{1+\gamma}}\mu_{0b} \bigg),
\end{align*}
and notice that the error term converges to zero. Altogether, we have proved so far 
\begin{align*}
G_{\Theta,b}(r_{\widetilde{m}},t)
& = g_{\Theta,b}(r_{\widetilde{m}}) + \widetilde{m}\, (v_{\widetilde{m}} - \eta_{\gamma}) + \sum_{k=1}^b k^{\gamma-1}t^k - \frac{n\, \eta_{\gamma}}{\gamma} - \zeta(1-\gamma)\\
& = \Li_{1-\gamma}(r_{\widetilde{m}})  + \widetilde{m}\, (v_{\widetilde{m}} - \eta_{\gamma}) + \sum_{k=1}^b k^{\gamma-1}(t^k-r_{\widetilde{m}}^k) - \frac{n\, \eta_{\gamma}}{\gamma} - \zeta(1-\gamma)\\
& = \Li_{1-\gamma}(r_{\widetilde{m}}) 
- \frac{ \Gamma(1+\gamma)^{\frac{1}{1+\gamma}} }{1+\gamma} j n^{-\frac{1}{1+\gamma}} 
- \frac{n\, \eta_{\gamma}}{\gamma} -\zeta(1-\gamma) + O\big(j^2 n^{-\frac{2+\gamma}{1+\gamma}} \big).
\end{align*}
Finally, we have
\begin{align*}
\Li_{1-\gamma}(r_m) 
&= \Gamma(\gamma) v_m^{-\gamma} + \zeta(1-\gamma) + O(v_m) \\
&= \frac{\Gamma(\gamma)}{\Gamma(1+\gamma)^{\frac{\gamma}{1+\gamma}}} \Big( n^{\frac{\gamma}{1+\gamma}} + \frac{\gamma}{1+\gamma} j n^{-\frac{1}{1+\gamma}} + O( j^2  n^{-\frac{2+\gamma}{1+\gamma}}) \Big) + \zeta(1-\gamma)
\end{align*}
which yields
\begin{align}\label{eq:tot_var_distance_error_term}
G_{\Theta,b}(r_{\widetilde{m}},t)
 =  O\big(j^2 n^{-\frac{2+\gamma}{1+\gamma}} \big) 
 = O\bigg(\frac{g^2(b) \, \sigma^2_{0b}}{\sigma^2_{bn}}\bigg),
\end{align}
and this converges to zero because of \eqref{eq:tot_var_sigma_0b_klein_o_sigma_bn}.
Altogether, we have proved that if $b = o(n^{\frac{1}{1+\gamma}})$ then 
$$d_b(n) = O\big( b^{2+\gamma} n^{-\frac{2+\gamma}{1+\gamma}}   + b^{-\gamma/6} + O(n^{-\frac{\gamma}{1+\gamma}} ) \big).$$
To complete the proof of Theorem~\ref{thm:algrowth_total_variation}, we assume now $b \neq o(n^{\frac{1}{1+\gamma}})$ and show that in this case $\liminf_{n \rightarrow \infty} d_b(n)>0$. 
Recall from (\ref{eq:algrowth_d_TV}) that
\begin{align*}
d_b(n) \geq \mathbb{P}_{\Theta}^t [T_{0b}>n].
\end{align*}

For $b \, n^{-\frac{1}{1+\gamma}} \rightarrow \infty$ 
the mean of $T_{0b}$ is $n + O(1)$ and the variance is of order $n^{\frac{2+\gamma}{1+\gamma}}$. 
Thus $\mathbb{P}_{\Theta}^t [T_{0b}>n] > 0$ for all $n$. But if $b = c n^{\frac{1}{1+\gamma}}$, then $\ETt{T_{0b}} = Cn + O(1)$ where $C=C(c)$ can be very small when $c$ is very small. 
In particular, if $C<1$, then $\mathbb{P}_{\Theta}^t [T_{0b}>n] \rightarrow 0$, thus a more elaborate argument is needed.

A crucial point in the proof above is equation \eqref{eq:tot_var_sigma_0b_klein_o_sigma_bn}. Notice that for $b = c n^{\frac{1}{1+\gamma}}$ the usual computations give
\begin{align*}
\mu_{0b}' := \ETt{T_{0b}} = O(n)
\quad \text{ and } \quad
\sigma_{0b}' := \sqrt{\VTt{T_{0b}}} = O\big( n^{\frac{2+\gamma}{2(1+\gamma)}}\big)
\end{align*}
as well as
\begin{align*}
\mu_{bn}' := \ETt{T_{bn}} = O(n)
\quad \text{ and } \quad
\sigma_{bn}' := \sqrt{\VTt{T_{bn}}} = O\big(n^{\frac{2+\gamma}{2(1+\gamma)}}\big).
\end{align*}
 Thus, unlike as in \eqref{eq:tot_var_sigma_0b_klein_o_sigma_bn}, here  $\sigma'_{0b} = o(\sigma'_{bn})$ does not hold, but we have $\sigma'_{0b} = O(\sigma'_{bn})$. Therefore, $\mathbb{P}_{\Theta}^t [T_{bn}= \mu'_{bn} - k]/\mathbb{P}_{\Theta}^t [T_{0n}=n]$ will not converge to $1$ implying that $d_b(n)$ will not converge to $0$. In a different setting, this was also proven in \cite{CiZe13}: suppose that $d_b(n)\to 0$ for $b= c n^{\frac{1}{1+\gamma}}$ with $c$ some non-negative constant. Then the random variables
  \begin{align*}
   \sum_{m=\frac{c}{2} n^{\frac{1}{1+\gamma}}}^{c n^{\frac{1}{1+\gamma}}} C_m
   \quad \text{ and } \quad
   \sum_{m=\frac{c}{2} n^{\frac{1}{1+\gamma}}}^{c n^{\frac{1}{1+\gamma}}} Z_m
  \end{align*}
would have same limit as $n\to\infty$. 
However, it was shown in \cite[Theorem~3.6, Theorem~4.6 and Remark~4.4]{CiZe13} that for all $c>0$ these two random variables satisfy two different central limit theorems.
\end{proof}

\begin{remark}
Notice that the term $b^{-\gamma /6}$ in the order of $d_b(n)$ in Theorem~\ref{thm:algrowth_total_variation} comes from the Kolmogorov distance $d_K(T_{0b}^x,G_b)$. Instead of using the Gaussian approximation, one could prove the first part of the theorem also using saddle point analysis to compute $\PTt{T_{0b}=m}$ explicitly. This would give the same result but without the $b^{-\gamma /6}$ term. However, we decided to state the proof using the Gaussian approximation since it allows an intuitive understanding of what is going on.
\end{remark}

%-----------------------------------------------------------------------------
%-----------------------------------------------------------------------------

\vskip 40pt

\section{The Erd\H{o}s-Tur\'an Law } \label{section:erdos-turan-law}

Recall that the order $O_n(\sigma)$ of a permutation $\sigma \in \Sn$ is the smallest integer $k$ such that the $k$-fold application of $\sigma$ to itself gives the identity. Assume that $\sigma = \sigma_1\cdots \sigma_\ell$ with $\sigma_i$ disjoint cycles of length $\la_i$, then $O_n(\sigma)$ can be computed as
$$O_n(\sigma) = \lcm(\lambda_1, \la_2, \cdots \la_{\ell}).$$
In Section~\ref{subsection:algrowth_preliminaries} an approximating random variable $Y_n$ is introduced which shares many properties with $O_n$ but is much easier to handle. Then Section~\ref{subsection:algrowth_erdoes_turan} is devoted to the proof of Theorem~\ref{thm:algrowth_CLT_logO_n} and Section~\ref{subsection:FCLT} to the proof of Theorem~\ref{thm:fclt}.

\subsection{Preliminaries}\label{subsection:algrowth_preliminaries}
A common approach to investigate the asymptotic behavior of $\log (O_n)$ is to introduce the random variable
\begin{align*}
 Y_n := \prod_{m=1}^{n}m^{C_m} 
% \quad 
% \text{and} \quad 
% \widetilde{Y_n} = \sum_{m=1}^b C_m \log (m),
\end{align*}
where the $C_m$ denote the cycle counts, and to show that $\log (O_n)$ and $\log (Y_n)$ are relatively close in a certain sense. 
To give explicit expressions for $O_n$ and $Y_n$ involving the cycle counts $C_m$, introduce
\begin{align*}
D_{nk} := \sum_{m=1}^n C_m \one_{\{k | m\}}
% %\,
% \widetilde{D}_{nk} := \sum_{m=1}^b C_m \one_{\{k | m\}}
%\\
%&
\quad \text{ and } \quad
D_{nk}^* := \min \{1, D_{nk} \}.
\end{align*}
Now let $p_1,p_2,\dots$ be the prime numbers
and $q_{m,i}$ be the multiplicity of a prime number $p_i$ in the number $m$. Then
\begin{align}
\label{eq:Yn_with_Cm}
 Y_n 
&= \prod_{m=1}^{n}m^{C_m} 
= \prod_{m=1}^{n}(p_1^{q_{m,1}} p_2^{q_{m,2}} \cdots p_n^{q_{m,n}})^{C_m}\nonumber\\
&= \prod_{i=1}^{n}p_i^{\,C_1\cdot q_{1,i} + C_2\cdot q_{2,i} + \cdots+ C_n \cdot q_{n,i}}
= 
  \prod_{p \leq n}p^{\,\sum_{j=1}^n{D_{np^j}}} ,
\end{align}
where $\prod_{p \leq n}$ denotes the product over all prime numbers that are less or equal to $n$. The last equality can be understood as follows:
let $p$ be fixed and define $m=p^{\,q_{m,i}} \cdot a$ where $a$ and $p$ are coprime (meaning that their least common divisor is $1$). Then $C_m$ occurs exactly once in the sum $D_{np^j}$ if $j\leq q_{m,i}$ and does not occur in $D_{np^j}$ if $j> q_{m,i}$. Thus $C_m$ occurs $q_{m,i}$ times in the sum $\sum_{j=1}^n{D_{np^j}}$.
%We have used in the last equality that $D_{np^j}$ counts the numbers between $1$ and $n$, which are divisible by $p^j$.
%
Furthermore, we have also used that $D_{nk} =0$ for $k>n$.
Analogously, we have
\begin{align}
\label{eq:classF_On_with_prims}
O_n = \prod_{p \leq n}p^{\,\sum_{j=1}^n{D^*_{np^j}}} .
% \ \text{ and }\
% \widetilde{Y_n} = \prod_{p \leq n}p^{\,\sum_{j=1}^n{\widetilde{D}_{np^j}}}.
\end{align}

To simplify the logarithm of the expressions \eqref{eq:Yn_with_Cm} and \eqref{eq:classF_On_with_prims},  we introduce the von Mangoldt function $\Lambda$, which is defined as
\begin{align*}
\Lambda(n) = 
\begin{cases} 
\log(p) &\mbox{if } n = p^k \text{ for some prime } p \text{ and } k\geq 1,  \\ 
0 & \mbox{otherwise.} 
\end{cases} 
\end{align*}
Then we  obtain
\begin{align}
\label{eq:log_On_von_mangold}
\log Y_n= \sum_{k \leq n} \Lambda(k) D_{nk} 
\quad \text{ and } \quad
% \log \widetilde{Y}_n = \sum_{k \leq n} \Lambda(k) \widetilde{D}_{nk} 
\log O_n= \sum_{k \leq n} \Lambda(k) D^*_{nk}.
\end{align}
Now define 
\begin{align}\label{eq:Delta_n}
\Delta_n 
:= \log (Y_n) - \log (O_n) 
 = \sum_{k \leq n}  \Lambda(k) \big(D_{nk} - D^*_{nk}\big) .
  \end{align}
Typically, in order to prove properties of $\log(O_n)$, one first establishes them for $\log(Y_n)$ and then one needs to show that $\Delta_n$ is approximately small enough to transfer the result to $\log(O_n)$, see Lemma~\ref{lem:algrowth_closeness} below.

Recall \eqref{eq:generating_series_logYn_allgemein}. For $\theta_m = m^{\gamma}$ one obtains the generating series 
\begin{align}\label{eq:algrowth_g_Theta}
 \sum_{n=0}^{\infty} h_n \E_{\Theta}[\exp(s \log (Y_n))] t^n  
= \exp\left( \sum_{m=1}^{\infty} \frac{1}{m^{1 - s - \gamma}} t^m  \right) =: \exp (\hat{g}_{\Theta}(t,s)).
\end{align}
As we consider $s$ fixed for the moment, we may write $\hat{g}_{\Theta}(t)$ instead of $\hat{g}_{\Theta}(t,s)$.
The function $\hat{g}_{\Theta}(t)$ is known to be the polylogarithm $\Li_{a}(t)$ with parameter 
$$a = 1 - s -\gamma.$$
For $\gamma >0$ and as $t \rightarrow 1$ it satisfies the asymptotic \eqref{eq:algrowth_asypm_polylog_2}. We will show that $\hat{g}_{\Theta}(t,s)$ is $\log$-admissible (see Definition~\ref{def_log-admissible}) in order to apply Lemma~\ref{lem:algrowth_expansion Gns} to compute $\E_{\Theta}[\exp(s \log (Y_n))]$.

\begin{lemma}
\label{lem:algrowth_g(r,s)_is_log_adm}
$\hat{g}_{\Theta}(t,s)$ is $\log$-admissible for $\gamma > 0$, $s > -\alpha$.
\end{lemma}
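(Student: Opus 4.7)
The plan is to identify $\hat{g}_\Theta(t,s) = \sum_{m \geq 1} m^{s+\gamma-1}t^m$ with the polylogarithm $\Li_{1-s-\gamma}(t)$, whose behaviour near the radius of convergence $\rho=1$ is governed by \eqref{eq:algrowth_asypm_polylog_2}. Writing $r = e^{-\eta}$ with $\eta \to 0^+$, and noting that the Euler operator $r\partial_r$ shifts the polylog index by $-1$, computation of the $\phi$-derivatives at $\phi=0$ yields
\begin{align*}
\alpha_s(r) = \Li_{-s-\gamma}(r) \sim \Gamma(1+s+\gamma)\,\eta^{-(1+s+\gamma)}, \qquad \beta_s(r) = \Li_{-1-s-\gamma}(r) \sim \Gamma(2+s+\gamma)\,\eta^{-(2+s+\gamma)},
\end{align*}
both diverging as $\eta \to 0$ provided $s+\gamma > 0$. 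I would then choose $\delta(r) := \eta^{\mu_s}$ with $\mu_s$ fixed in the open interval $\bigl(1+(s+\gamma)/3,\; 1+(s+\gamma)/2\bigr)$; this interval is non-empty and $\mu_s$ can be chosen continuously in $s$ on any compact subset of $\{s : s+\gamma > 0\}$.

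The Taylor expansion in $\phi$ reads
\begin{align*}
\hat{g}_\Theta(re^{i\phi},s) = \hat{g}_\Theta(r,s) + i\phi\,\alpha_s(r) - \tfrac{\phi^2}{2}\,\beta_s(r) + R(r,\phi,s),
\end{align*}
with remainder controlled by the next Euler-operator derivative $\Li_{-2-s-\gamma}(r) \sim \Gamma(3+s+\gamma)\eta^{-(3+s+\gamma)}$, giving $|R(r,\phi,s)| = O\bigl(|\phi|^3\eta^{-(3+s+\gamma)}\bigr)$ for $|\phi|\leq\delta(r)$. The lower bound $3\mu_s > 3+s+\gamma$ then forces $R = o(\phi^3 \delta^{-3})$, proving the approximation axiom. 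The width-of-convergence axiom reduces to $\delta^2\beta_s \gg \log\beta_s$, i.e.\ $\eta^{2\mu_s-(2+s+\gamma)}$ must dominate $\log(1/\eta)$, which holds polynomially once $\mu_s < 1+(s+\gamma)/2$; the divergence axiom is immediate.

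The main obstacle is the monotonicity axiom $\Re\hat{g}_\Theta(re^{i\phi},s) \leq \Re\hat{g}_\Theta(re^{\pm i\delta(r)},s)$ for $|\phi|>\delta(r)$. I would split the range at a fixed small $\phi_0>0$. On $\delta(r) < |\phi| \leq \phi_0$, the singularity-type expansion $\Li_{1-s-\gamma}(re^{i\phi}) \sim \Gamma(s+\gamma)(\eta-i\phi)^{-(s+\gamma)}$ from \eqref{eq:algrowth_asypm_polylog_2} gives, after taking real parts,
\begin{align*}
\Re\hat{g}_\Theta(re^{i\phi},s) \sim \Gamma(s+\gamma)(\eta^2+\phi^2)^{-(s+\gamma)/2}\cos\bigl((s+\gamma)\arctan(\phi/\eta)\bigr),
\end{align*}
which for $\phi_0$ small is strictly decreasing in $|\phi|$ on the stated range. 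On the outer range $\phi_0 < |\phi| \leq \pi$, the polylogarithm extends analytically to a function bounded on the closed unit disk away from $z=1$, so $\Re\hat{g}_\Theta(re^{i\phi},s) = O(1)$ uniformly in $r$, whereas $\Re\hat{g}_\Theta(re^{\pm i\delta(r)},s) \sim \Gamma(s+\gamma)\eta^{-(s+\gamma)}$ diverges, giving the inequality for $\eta$ sufficiently small. All polylogarithm constants depend continuously on the index, so every estimate is uniform in $s$ on compact subsets of $\{s : s+\gamma > 0\}$, as required by Lemma~\ref{lem:algrowth_expansion Gns}.
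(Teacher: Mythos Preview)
Your proposal is correct and follows the same approach as the paper, which in fact gives no self-contained argument: it records the derivative asymptotics $\hat{g}_\Theta^{(k)}(t)=t^{-k}\Li_{a-k}(t)\sim\Gamma(1+k-a)(-\log t)^{a-k-1}$ and then cites \cite[Proposition~3.7]{MaNiZe11}, only remarking that uniformity in $s$ on compacta needs to be checked. You are essentially supplying that omitted verification, with the same choice of $\alpha_s,\beta_s$ and a power-of-$\eta$ choice for $\delta$.

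One small correction concerning the monotonicity step. Your claim that $\Re\hat{g}_\Theta(re^{i\phi},s)\sim\Gamma(s+\gamma)(\eta^2+\phi^2)^{-(s+\gamma)/2}\cos\bigl((s+\gamma)\arctan(\phi/\eta)\bigr)$ is ``strictly decreasing in $|\phi|$'' on the near arc $\delta(r)<|\phi|\le\phi_0$ is not true in general once $s+\gamma>1$. Writing $\theta=\arctan(\phi/\eta)$ and $f(\theta)=\cos^{s+\gamma}(\theta)\cos\bigl((s+\gamma)\theta\bigr)$, one has $f'(\theta)=-(s+\gamma)\cos^{s+\gamma-1}(\theta)\sin\bigl((s+\gamma+1)\theta\bigr)$, which changes sign at $\theta_*=\pi/(s+\gamma+1)<\pi/2$; and since $\eta\to0$ while $\phi_0$ is fixed, the range of $\theta$ on the near arc eventually covers $\theta_*$. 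However, the monotonicity axiom only requires the maximum over $|\phi|>\delta$ to occur at $|\phi|=\delta$, and this does follow: $f$ is strictly decreasing on $(0,\theta_*)$, so $f(\theta)<f(\theta_0)$ for $\theta\in(\theta_0,\theta_*)$; and for $\theta\ge\theta_*$ one has $|f(\theta)|\le\cos^{s+\gamma}(\theta_*)<1$, which is eventually below $f(\theta_0)\to1$. With this adjustment your argument goes through.
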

\begin{proof}
For $k \geq 1$ as $t \rightarrow 1$ the following holds:
\begin{align}\label{eq:algrowth_g^(k)}
 \hat{g}_{\Theta}^{(k)}(t) 
= t^{-k} \Li_{a -k}(t)
=  \Gamma(1+k -a)(-\log (t))^{a -k-1}t^{-k} + O(1). 
\end{align}
The proof that $\hat{g}_{\Theta}(t,s)$ satisfies the properties given in Definition~\ref{def_log-admissible} is analogous to the proof of Proposition 3.7 in \cite{MaNiZe11}; one simply has to verify that all involved expressions are uniform in $s$ for $-\gamma +\epsilon \leq s \leq C$ for some constant $C$. This is straightforward and we thus omit the details.
\end{proof}

Let us now compute the generating function of $\log (Y_n)$ by means of Lemma~\ref{lem:algrowth_expansion Gns}. 
\begin{theorem}\label{thm:algrowth_mom-gen_logYn}
Let $\hat{g}_{\Theta}$ be as in (\ref{eq:algrowth_g_Theta}) with $\gamma > 0$. Then we have
\begin{align*}
&\E_{\Theta}[\exp(s \log (Y_n))] \\
= & \Big(\sqrt{\tilde{\gamma}_{2,s}} \,  n^{\frac{1}{2}(\frac{1}{1+\gamma} - \frac{1}{1+\gamma+s})}\Big) 
\exp\bigg(\tilde{\gamma}_{1,s} \, n^{1 -\frac{1}{1+\gamma+s}} -\tilde{\gamma}_{1,0} \, n^{1 -\frac{1}{1+\gamma}} \bigg) \\
& \times \exp\big(\zeta(1-s-\gamma) - \zeta(1-\gamma)\big)\big(1+o(1)\big)
\end{align*}
with
\begin{align*}
\tilde{\gamma}_{1,s} = \frac{(1+\gamma +s)\Gamma(\gamma +s)}{\Gamma(1+\gamma+s)^{1-\frac{1}{1+\gamma+s}}} , 
\quad 
\tilde{\gamma}_{2,s} =  \frac{(1+\gamma) \Gamma(1+\gamma +s)^{\frac{1}{1+\gamma+s}}}{(1+\gamma +s) \Gamma(1+\gamma)^{\frac{1}{1+\gamma}}},
\end{align*}
where the error bounds are uniform in $s$ for bounded $s$, $s > -\gamma + \epsilon$.
\end{theorem}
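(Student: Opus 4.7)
The plan is to apply the saddle-point Lemma~\ref{lem:algrowth_expansion Gns} to the generating function in \eqref{eq:algrowth_g_Theta}, then divide the resulting asymptotic for $G_{n,s} = h_n \,\E_{\Theta}[\exp(s\log Y_n)]$ by the known asymptotic \eqref{eq:intro_algrowth_h_n} for $h_n$. Since $\log$-admissibility is already granted by Lemma~\ref{lem:algrowth_g(r,s)_is_log_adm}, the real work lies in identifying the saddle-point, computing $\beta_s(r_{ns})$, evaluating $\hat{g}_\Theta(r_{ns},s)$, and then algebraically matching the three factors of the stated answer.

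Concretely, I would first compute $\alpha_s(r) = r\hat{g}_\Theta'(r,s) = \Li_{-s-\gamma}(r)$ and, using the polylogarithm expansion \eqref{eq:algrowth_asypm_polylog}, observe that the leading term near $r=1$ is $\Gamma(1+s+\gamma)(-\log r)^{-(1+s+\gamma)}$. Solving $\alpha_s(r_{ns})=n$ to leading order (which, by Remark~\ref{rem:saddle_point}, is enough) gives the saddle
\begin{align*}
r_{ns} = e^{-\eta_{\gamma,s}}, \qquad \eta_{\gamma,s} = \bigl(n/\Gamma(1+\gamma+s)\bigr)^{-1/(1+\gamma+s)}.
\end{align*}
Similarly, the identity $\beta_s(r) = r^2\hat{g}_\Theta''(r,s) + \alpha_s(r) = \Li_{-s-\gamma-1}(r)$ and \eqref{eq:algrowth_asypm_polylog} yield $\beta_s(r_{ns}) \sim \Gamma(2+\gamma+s)\,\eta_{\gamma,s}^{-(2+\gamma+s)}$. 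For the exponential part, I would apply \eqref{eq:algrowth_asypm_polylog_2} to $\hat{g}_\Theta(r_{ns},s) = \Li_{1-s-\gamma}(r_{ns})$, obtaining
\begin{align*}
\hat{g}_\Theta(r_{ns},s) = \Gamma(\gamma+s)\,\eta_{\gamma,s}^{-(\gamma+s)} + \zeta(1-s-\gamma) + o(1).
\end{align*}
Plugging all three ingredients into Lemma~\ref{lem:algrowth_expansion Gns} produces an explicit asymptotic for $G_{n,s}$ in which the exponent naturally has the form $\bigl(\Gamma(1+\gamma+s)^{1/(1+\gamma+s)} + \Gamma(\gamma+s)\Gamma(1+\gamma+s)^{-(\gamma+s)/(1+\gamma+s)}\bigr) n^{1-1/(1+\gamma+s)}$.

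Next I would simplify this prefactor by pulling out $\Gamma(1+\gamma+s)^{-(\gamma+s)/(1+\gamma+s)}$ and using $\Gamma(1+\gamma+s) = (\gamma+s)\Gamma(\gamma+s)$ to collapse the bracket to $(1+\gamma+s)\Gamma(\gamma+s)$; this recovers exactly the constant $\tilde\gamma_{1,s}$ in front of $n^{1-1/(1+\gamma+s)}$. Finally, dividing $G_{n,s}$ by the specialization at $s=0$ (which is the $h_n$ formula \eqref{eq:intro_algrowth_h_n}), the $n$-exponents subtract as $\tfrac12\bigl(\tfrac{1}{1+\gamma}-\tfrac{1}{1+\gamma+s}\bigr)$, the two polylog constants combine into $\zeta(1-\gamma-s)-\zeta(1-\gamma)$, and the remaining $\Gamma$-factors in the square-root prefactor, after applying $\Gamma(2+\gamma+s) = (1+\gamma+s)\Gamma(1+\gamma+s)$ (and the analogous identity at $s=0$), simplify to $\sqrt{\tilde\gamma_{2,s}}$.

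The only step that requires care is the bookkeeping of the algebraic simplification: the gamma exponents generated by $\beta_s(r_{ns})^{-1/2}$, by the $\Gamma(1+\gamma+s)^{1/(1+\gamma+s)}$ coming from $r_{ns}^{-n}$, and by $\Gamma(\gamma+s)\eta_{\gamma,s}^{-(\gamma+s)}$ must combine cleanly with their $s=0$ analogues in the $h_n$ formula. Uniformity of all the saddle-point estimates in $s$ on bounded intervals with $s>-\gamma+\epsilon$ is part of the hypothesis of Lemma~\ref{lem:algrowth_expansion Gns} and was already verified in Lemma~\ref{lem:algrowth_g(r,s)_is_log_adm}, so no additional analytic input is required; the subdominant terms from \eqref{eq:algrowth_asypm_polylog_2} and from the saddle-point approximation \eqref{eq:saddle_solution} all feed into the stated $(1+o(1))$ error, uniformly in $s$.
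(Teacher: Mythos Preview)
Your proposal is correct and follows essentially the same approach as the paper: both apply Lemma~\ref{lem:algrowth_expansion Gns} with the same saddle point $r_{ns}=\exp\bigl(-(n/\Gamma(1+\gamma+s))^{-1/(1+\gamma+s)}\bigr)$, compute $\alpha_s$, $\beta_s$, and $\hat g_\Theta(r_{ns},s)$ via the polylogarithm expansion, and then take the ratio $G_{n,s}/G_{n,0}$. In fact you spell out the final algebraic reduction to $\tilde\gamma_{1,s}$ and $\tilde\gamma_{2,s}$ more explicitly than the paper, which simply writes ``This gives the result.''
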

\begin{proof}
We first compute $r_{ns}$. This should satisfy
\begin{align*}
\alpha_s(r_{ns}) =n
\end{align*}
but as stated in Remark~\ref{rem:saddle_point} it actually suffices that
\begin{align}\label{eq:algrowth_saddlepoint}
\alpha_s(r_{ns})  - n = o\Big(\sqrt{\beta_s(r_{ns}) }\Big)
\end{align}
holds. We set for $a=1-s-\gamma$
\begin{align*}
r_{ns} 
= \exp \bigg(-\Big(\frac{n}{\Gamma(2-a)} \Big)^{\frac{1}{a -2}}\bigg) 
\end{align*}
and obtain
\begin{align*}
\alpha_s(r_{ns}) = n + O(1)
\quad \text { and } \quad 
\beta_s(r_{ns}) 
= \Gamma(3-a) \Big(\frac{n}{\Gamma(2-a)} \Big)^{1+\frac{1}{2-a}} +O(1), 
\end{align*}
so that (\ref{eq:algrowth_saddlepoint}) holds. Furthermore,
\begin{align*}
\hat{g}_{\Theta}(r_{ns}, s) 
= \Gamma(1-a) \Big(\frac{n}{\Gamma(2-a)} \Big)^{1-\frac{1}{2-a}} + \zeta(1-s-\gamma) +o(1).
\end{align*}
We now have
\begin{align*}
 G_{n,s} = [t]^n \exp(\hat{g}_{\Theta}(t, s)) = h_n \E_{\Theta}[\exp(s \log (Y_n))].
\end{align*}
Therefore,
\begin{align*}
h_n 
= &\, G_{n,0}=\frac{1}{\sqrt{2\pi}} \big(r_{n0} \big)^{-n} \beta_0(r_{n0})^{-1/2} \exp\big(\hat{g}_{\Theta}(r_{n0},0) \big) \big(1+o(1)\big)\\
= & \, \big(2\pi \Gamma(2+\gamma)\big)^{-\frac{1}{2}} \Big(\frac{\Gamma(1+\gamma)}{n} \Big)^{\frac{2+\gamma}{2(1+\gamma)}}  \times \nonumber\\
&\exp\bigg(\frac{1+\gamma}{\gamma} \,\Gamma(1+\gamma)^{\frac{1}{1+\gamma}} \, n^{\frac{\gamma}{1+\gamma}} + \zeta(1-\gamma) \bigg)\big(1+o(1)\big)
\end{align*}
and
\begin{align*}
&\E_{\Theta}[\exp(s \log (Y_n))] \nonumber\\
&= \bigg(\frac{r_{n0}}{r_{ns}}\bigg)^n \bigg(\frac{\beta_0(r_{n0})}{\beta_s(r_{ns})}\bigg)^{1/2} 
\exp\big(\hat{g}_{\Theta}(r_{ns}, s)-\hat{g}_{\Theta}(r_{n0}, 0)\big)  \big(1+o(1)\big).
\end{align*}
This gives the result.
\end{proof}
\begin{remark}\label{remark_bound_total_var_distance_too_small}
Given Theorem~\ref{thm:algrowth_total_variation}, a natural way to investigate further properties of $\log(O_n)$, for example to prove the central limit theorem, would be to work with the functional $\log(P_n):=\sum_{m=1}^n \log(m)Z_m $ instead of with $\log(Y_n)=\sum_{m=1}^n \log(m)C_m$ and to show that the contribution of the large components $C_{b+1}, ..., C_n$ is negligible. However, in the current setting, the large cycle counts actually do contribute to the behavior of $\log(O_n)$. To see this, one may easily compute the moment generating function of $\log(P_n)$ to show that it satisfies the central limit theorem 
\begin{align*}
\frac{\log (P_n) - \tilde{G}(n)}
{\sqrt{ F(n)}} 
\overset{d}{\longrightarrow} \mathcal{N}(0,1)
\end{align*}
where $F(n)$ is as in Theorem~\ref{thm:algrowth_CLT_logO_n} but 
\begin{align}\label{eq:algrowth_exp_logZ_n}
\tilde{G}(n)= \frac{K(\gamma)}{1+\gamma} \, n^{\frac{\gamma}{1+\gamma}} \log(n) + n^{\frac{\gamma}{1+\gamma}} \tilde{H}(n)
\end{align} 
with
$$\tilde{H}(n) = - K(\gamma)  \frac{\log(\Gamma(1+\gamma))}{1+\gamma} . $$
Thus, even rescaled by $F(n)$, the discrepancy between $G(n)$ and $\tilde{G}(n)$ is too large to prove the central limit theorem for $\log(O_n)$ via the independent approximating process. More generally, it seems that the bound $b=o(n^{\frac{1}{1+\gamma}})$ is too small to exploit Theorem~\ref{thm:algrowth_total_variation} to study the whole cycle count process. Nonetheless, in Section~\ref{subsection:FCLT} we will explain how to use Theorem~\ref{thm:algrowth_total_variation} in order to investigate properties of the small cycles. 
\end{remark}

%----------------------------------------------------------------------------
%----------------------------------------------------------------------------

\vskip 15pt 

\subsection{Proof of Theorem~\ref{thm:algrowth_CLT_logO_n}}\label{subsection:algrowth_erdoes_turan}

With Theorem~\ref{thm:algrowth_mom-gen_logYn} at hand, we will first show the Erd\H{o}s-Tur\'an Law for $\log(Y_n)$. The complicated part is to transfer the result to $\log(O_n)$, see Lemma~\ref{lem:algrowth_closeness}, where we will need Theorem~\ref{thm:algrowth_total_variation}.

%\begin{theorem}\label{thm:algrowth_CLT_logO_n}
%Let $\hat{g}_{\Theta}$ be as in (\ref{eq:algrowth_g_Theta}) with $0< \gamma < 1$. Then we have, as $n \rightarrow \infty$, 
%\begin{align*}
%\frac{\log (O_n) - G(n)}
%{\sqrt{ F(n)}} 
%\overset{d}{\longrightarrow} \mathcal{N}(0,1)
%\end{align*}
%where $\mathcal{N}(0,1)$ denotes the standard Gaussian distribution and 
%\begin{align*}
%F(n)&= \frac{K(\gamma)}{(1+\gamma)^3} \, n^{\frac{\gamma}{1+\gamma}} \log^2 (n) \quad\text{and} \\
%
%G(n)&= \frac{K(\gamma)}{1+\gamma} \, n^{\frac{\gamma}{1+\gamma}} \log(n) + n^{\frac{\gamma}{1+\gamma}} H(n) \quad \text{with}\\
%
%H(n) &= K(\gamma) \bigg(\frac{\Gamma'(\gamma)}{\Gamma(\gamma)}- \frac{\log(\Gamma(1+\gamma))}{1+\gamma} \bigg) ,
%\end{align*} 
%where $\Gamma'$ denotes the derivative of the gamma function and
%$$K(\gamma) = \Gamma(\gamma) \Gamma(1+\gamma)^{-\frac{\gamma}{1+\gamma}} .$$
%\end{theorem}

%
%
The proof of Theorem~\ref{thm:algrowth_CLT_logO_n} is a direct consequence of the following two lemmas. 
\begin{lemma}\label{lem:algrowth_CLT_logY_n}
Let $\hat{g}_{\Theta}$ be as in (\ref{eq:algrowth_g_Theta}) with $\gamma > 0$. Then we have, as $n \rightarrow \infty$, 
\begin{align*}
\frac{\log (Y_n) - G(n)}
{\sqrt{ F(n)}} 
\overset{d}{\longrightarrow} \mathcal{N}(0,1)
\end{align*}
where $\mathcal{N}(0,1)$ denotes the standard Gaussian distribution and $F(n)$ and $G(n)$ are as in Theorem~\ref{thm:algrowth_CLT_logO_n}.
\end{lemma}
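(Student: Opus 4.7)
The plan is to extract the CLT directly from the asymptotic moment generating function computed in Theorem~\ref{thm:algrowth_mom-gen_logYn}. Since convergence of MGFs in a neighborhood of zero implies weak convergence (Lévy's continuity theorem, MGF version), it suffices to prove
\[
\ET{\exp\!\Big(\tfrac{s(\log Y_n - G(n))}{\sqrt{F(n)}}\Big)} \longrightarrow e^{s^2/2}
\]
for every $s$ in some neighborhood of $0$. Setting $u = u(n) := s/\sqrt{F(n)}$, note that $u \to 0$ and moreover $u \log n \to 0$ (indeed $u \log n = O(n^{-\gamma/(2(1+\gamma))})$). We may therefore substitute this small parameter into Theorem~\ref{thm:algrowth_mom-gen_logYn}; its uniformity in $s$ (for bounded $s$ and $s > -\gamma + \epsilon$) is the reason the theorem is stated that way. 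What remains is a Taylor expansion of the right‑hand side of that theorem around $u = 0$ to second order.

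The core of the work is the expansion of
\[
T(u) := \tilde{\gamma}_{1,u}\, n^{1-\frac{1}{1+\gamma+u}} - \tilde{\gamma}_{1,0}\, n^{\frac{\gamma}{1+\gamma}},
\]
the non‑trivial piece of the exponent. Writing $T(u) = \tilde{\gamma}_{1,0} n^{\gamma/(1+\gamma)}(e^{L(u) - L(0)} - 1)$ with $L(u) = \log \tilde{\gamma}_{1,u} + (1-\tfrac{1}{1+\gamma+u}) \log n$, I would compute $L'(0)$ and $L''(0)$ explicitly. Using the relation $\Gamma(1+\gamma) = \gamma \Gamma(\gamma)$ one gets after a direct but somewhat lengthy digamma calculation
\[
(\log \tilde{\gamma}_{1,u})'\!\big|_{u=0} = \frac{1}{1+\gamma}\!\left(\frac{\Gamma'(\gamma)}{\Gamma(\gamma)} - \frac{\log \Gamma(1+\gamma)}{1+\gamma}\right),
\]
and combined with $\psi'(0) = (1+\gamma)^{-2}$ and $\tilde{\gamma}_{1,0} = (1+\gamma)K(\gamma)$ this gives $T'(0) = G(n)$ precisely. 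For the second derivative, the dominant contribution as $n\to\infty$ comes from the $(\psi'(0)\log n)^2$ term inside $L'(0)^2 + L''(0)$, yielding
\[
T''(0) = \tilde{\gamma}_{1,0} n^{\gamma/(1+\gamma)} \cdot \frac{\log^2 n}{(1+\gamma)^4} + O(n^{\gamma/(1+\gamma)} \log n) = F(n) + o(F(n)).
\]
Hence $T(u) = G(n)\, u + \tfrac{1}{2} F(n)\, u^2 + o(F(n)\, u^2)$, so after plugging in $u = s/\sqrt{F(n)}$,
\[
T(u) = \frac{G(n) s}{\sqrt{F(n)}} + \frac{s^2}{2} + o(1).
\]

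The remaining factors from Theorem~\ref{thm:algrowth_mom-gen_logYn} are asymptotically negligible: the prefactor $\sqrt{\tilde{\gamma}_{2,u}}\to 1$ by continuity, the extra power $n^{\frac{1}{2}(\frac{1}{1+\gamma}-\frac{1}{1+\gamma+u})}$ tends to $1$ because its logarithm is $O(u\log n) = o(1)$, and $\zeta(1-\gamma-u) - \zeta(1-\gamma) = O(u) \to 0$ (using that $\zeta$ is analytic off $z=1$ and $\gamma > 0$). Combining these with the Taylor expansion above yields
\[
\ET{e^{u \log Y_n}} = e^{G(n) u + s^2/2}\bigl(1+o(1)\bigr),
\]
so multiplying by $e^{-G(n) u}$ gives the desired limit $e^{s^2/2}$ and the lemma follows. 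The main technical obstacle is the first‑order algebra: verifying that $T'(0)$ matches $G(n)$ on the nose (in particular the $H(n)$ piece involving $\Gamma'/\Gamma$ and $\log\Gamma$); once this is done, the $\log^2 n$ coefficient in $T''(0)$ is essentially forced to equal $F(n)$ by the structure of the saddle point and requires only to check that all other contributions to $T''(0)$ are of lower order in $n^{\gamma/(1+\gamma)}\log^2 n$.
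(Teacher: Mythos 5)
Your proof is correct and follows the same route as the paper's own argument: use the uniform-in-$s$ asymptotic moment generating function from Theorem~\ref{thm:algrowth_mom-gen_logYn}, substitute $s/\sqrt{F(n)}$ (the uniformity is exactly what licenses this), Taylor-expand the exponent around $s=0$ to second order, and conclude via L\'evy's continuity theorem. The paper states the expansion without displaying the digamma algebra, whereas you have carried out the verification that $T'(0)$ reproduces $G(n)$ (including the $H(n)$ correction via $\Gamma'(1+\gamma)/\Gamma(1+\gamma) = 1/\gamma + \Gamma'(\gamma)/\Gamma(\gamma)$) and that the $(\log n)^2$ coefficient of $T''(0)$ gives $F(n)$, which is a useful amount of detail to record.
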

\begin{remark}
Landau's result (\ref{eq:intro_asymp_Landau})
implies immediately that an analogous result to Lemma~\ref{lem:algrowth_CLT_logY_n} for $\log (O_n)$ can only be valid for $0 < \gamma < 1$. This means that $\log(Y_n)$ is a good approximation for $\log(O_n)$ when $0 < \gamma < 1$ but for $\gamma \geq 1$ the behavior of the two random variables is indeed different.
\end{remark}
\begin{proof}[Proof of Lemma~\ref{lem:algrowth_CLT_logY_n}]
 We write the the expansion in Theorem~\ref{thm:algrowth_mom-gen_logYn} as
\begin{align*}
\E_{\Theta}[\exp(s \log (Y_n))]
= \exp(f(n,s))
\end{align*}
and expand the function $f(n,s)$ around $s=0$. Now set $F(n)$ as in Lemma~\ref{lem:algrowth_CLT_logY_n}. Since the error terms in Theorem~\ref{thm:algrowth_mom-gen_logYn} are uniform in $s$ we can apply it for 
$s/ \sqrt{ F(n) }$. This gives, as $n \rightarrow \infty$,
\begin{align*}
\E_{\Theta}\Bigg[ \exp \bigg(s \frac{\log (Y_n)}{ \sqrt{F(n)}} \bigg)\Bigg] 
\sim 
\exp\bigg(\frac{s^2}{2} + \bar{G}(n) H(n) s\bigg) ,
\end{align*}
where $H(n)$ is defined as in Lemma~\ref{lem:algrowth_CLT_logY_n} and 
\begin{align*}
 \bar{G}(n) = \sqrt{\Gamma(\gamma) + \Gamma(1+\gamma)} \Big(\frac{n}{\Gamma(1+\gamma)}\Big)^{\frac{\gamma}{2(1+\gamma)}} \log \Big(\frac{n}{\Gamma(1+\gamma)}\Big).
\end{align*}
 By means of L\'{e}vy's continuity theorem the result follows.
\end{proof}

To transfer the result from $\log(Y_n)$ to $\log(O_n)$ we need to show that they are close in a certain sense. We will prove the following 

\begin{lemma}\label{lem:algrowth_closeness}
For $\theta_m = m^{\gamma}$ with $0<\gamma<1$ the following holds as $n \rightarrow \infty$:
\begin{align*}
 \mathbb{P}_{\Theta}\big(\log(Y_n)-\log(O_n) \geq \log(n)\log\log(n) \big) \rightarrow 0. 
\end{align*}
\end{lemma}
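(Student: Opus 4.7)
The proof rests on the identity from equations (4.2)--(4.5), namely
$$\Delta_n=\sum_{k=p^j\le n}\log p\,(D_{nk}-D_{nk}^*),$$
together with the observation that $D_{nk}-D_{nk}^* = (D_{nk}-1)_+$ vanishes unless $D_{nk}\ge 2$. My plan is to introduce a threshold $B=B(n)$ to be chosen and split the sum as
$\Delta_n=\mathcal S_n+\mathcal T_n$, where $\mathcal S_n$ collects the prime powers $p^j\le B$ and $\mathcal T_n$ the ones with $p^j>B$, and then bound each piece by a different method.

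For $\mathcal T_n$ I use the elementary bound $(x-1)_+\le x(x-1)$ valid for non-negative integers, which gives $\E_\Theta[\mathcal T_n]\le\sum_{p^j>B,\,p^j\le n}\log p\,\E_\Theta[D_{np^j}(D_{np^j}-1)]$. Lemma~\ref{lem:Cm_expectation} expresses the factorial moment as a double sum of ratios $h_{n-m_1-m_2}/h_n$, and the estimate $h_{n-m}/h_n\sim\exp(-m\eta_\gamma)$ (readily extracted from the $h_n$-asymptotic \eqref{eq:intro_algrowth_h_n}) yields, after a direct computation, $\E_\Theta[D_{nk}(D_{nk}-1)]=O(n^{2\gamma/(1+\gamma)}/k^{2})$ in the relevant range. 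Summing by Mertens' estimate over prime powers then gives $\E_\Theta[\mathcal T_n]=O(n^{2\gamma/(1+\gamma)}/B)$. Choosing $B=n^{2\gamma/(1+\gamma)}(\log n)^{2}$---which is $o(n)$ because $\gamma<1$ implies $2\gamma/(1+\gamma)<1$---the bound tends to $0$, and Markov's inequality forces $\mathcal T_n=o_P(1)$.

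For $\mathcal S_n$ I appeal to Theorem~\ref{thm:algrowth_total_variation} to transfer the joint law of $(C_1,\dots,C_b)$ to the independent Poisson vector $(Z_1,\dots,Z_b)$ at a total-variation cost tending to zero, for some $b=o(n^{1/(1+\gamma)})$ chosen large enough to capture the cycle lengths relevant to $D_{np^j}$ with $p^j\le B$. In the independent Poisson regime the quantity $\sum_{p^j\le B}\log p\,(D_{np^j}-1)_+$ can be analysed through the explicit law of sums of independent Poissons over arithmetic progressions. The target $o_P(\log n\log\log n)$ is obtained by a concentration argument (of Chernoff/Bennett type together with the prime-power structure) applied to the $D_{np^j}$'s, exploiting that the $\log p$-weighted contributions from distinct prime powers behave nearly independently and telescope against the lcm structure of $\log O_n$.

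The main obstacle is this last step: the naive mean bound on $\mathcal S_n$ in the Poisson model is of order $n^{\gamma/(1+\gamma)}\log n$, far beyond the target $\log n\log\log n$, so a purely first-moment argument cannot succeed. One must exploit sharp concentration of the individual $D_{np^j}$'s about their means, combined with the fact that conditioning on $\sum_m mC_m=n$ (encoded through Theorem~\ref{thm:algrowth_total_variation}) reshapes the joint structure of the summands so that $\Delta_n$ concentrates at the required level. In the regime $\gamma\ge 1/2$ the threshold $B$ from the tail analysis may exceed the permissible range $b=o(n^{1/(1+\gamma)})$ of Theorem~\ref{thm:algrowth_total_variation}; in this case $\mathcal S_n$ must be further split by decomposing $D_{np^j}=D_{np^j}^{\le b}+D_{np^j}^{>b}$, applying the Poisson transfer to the head and the second-moment estimate of the previous paragraph to the tail.
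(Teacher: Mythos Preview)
Your proposal has a genuine gap at exactly the point you flag as ``the main obstacle.'' Your own computation shows that $\E_\Theta[\mathcal S_n]$ is of order $n^{\gamma/(1+\gamma)}\log n$, and this is not an artefact of the Poisson model: by Lemma~\ref{lem:Cm_expectation} the same estimate holds under $\mathbb P_\Theta^n$. Since $\mathcal S_n\ge 0$, no concentration argument can force $\mathcal S_n$ below $\log n\log\log n$ with high probability when its mean is that large. Your suggestion that ``conditioning on $\sum_m mC_m=n$ reshapes the joint structure'' cannot help either---that conditioning \emph{is} the measure $\mathbb P_\Theta^n$, and Theorem~\ref{thm:algrowth_total_variation} says precisely that on the range of cycle lengths you need, conditioning changes almost nothing. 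There is also a structural issue you partly acknowledge: $\mathcal S_n$ depends on \emph{all} cycle counts (since $D_{np^j}$ involves every multiple of $p^j$ up to $n$), so the total-variation transfer of Theorem~\ref{thm:algrowth_total_variation}, which only controls $(C_1,\dots,C_b)$ with $b=o(n^{1/(1+\gamma)})$, does not apply to $\mathcal S_n$ directly.

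The paper does not attempt to bound $\Delta_n=\sum_k\Lambda(k)(D_{nk}-1)_+$ term by term at all. Instead it rewrites
\[
\log O_n=\psi(n)-R(n),\qquad R(n)=\sum_{k\le n}\Lambda(k)\,\one_{\{D_{nk}=0\}},
\]
so that $\Delta_n=\bigl(\log Y_n-\psi(b)\bigr)+\bigl(R(n)-\sum_{k>b}\Lambda(k)\bigr)$ for a threshold $b=n^{\gamma/(1+\gamma)}\log^2 n$. The first bracket is handled by the CLT for $\log Y_n$ (Lemma~\ref{lem:algrowth_CLT_logY_n}): since $\psi(b)\sim b$ dominates $G(n)$, this piece is large and \emph{negative} with high probability, which absorbs the large positive mean that defeats your approach. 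The second bracket is bounded by $S(b)=\sum_{k\le b}\Lambda(k)\,\one_{\{C_k=0\}}$, a functional of $(C_1,\dots,C_b)$ only; since $b=o(n^{1/(1+\gamma)})$ for $\gamma<1$, Theorem~\ref{thm:algrowth_total_variation} applies cleanly, and in the Poisson model $S'(b)$ is a weighted sum of independent Bernoulli indicators to which a Chernoff bound is applied. The key idea you are missing is this Chebyshev-function decomposition: it replaces the intractable $(D_{nk}-1)_+$ by the indicator $\one_{\{C_k=0\}}$, and it lets the CLT for $\log Y_n$ do the work that your first-moment bound on $\mathcal S_n$ cannot.
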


\begin{proof}
First, recall \eqref{eq:log_On_von_mangold} and notice that 
\begin{align*}
 \log(O_n) = \psi(n) - R(n)
\end{align*}
where
\begin{align*}
 \psi(n) = \sum_{k=1}^n \Lambda(k) \quad \text{and} \quad R(n) = \sum_{k=1}^n \Lambda(k)\one_{\{D_{nk}=0 \}}.
\end{align*}
Recall that $\psi$ is the so-called Chebyshev function which satisfies the asymptotic $\psi(n) = n(1+o(1))$ (where the error term has a positive sign). We need to identify the smallest $b$ such that for $g(n) = \log(n)\log\log(n)$
\begin{align}\label{eq:algrowth_closeness_first}
 \mathbb{P}_{\Theta}\Big(\log(Y_n)-  \psi(b) \geq \frac{g(n)}{2}\Big) \rightarrow 0. 
\end{align}
Lemma~\ref{lem:algrowth_CLT_logY_n} implies that
\begin{align*}
 \mathbb{P}_{\Theta}\Big(\log(Y_n)- h(n) \geq \epsilon \Big) \rightarrow 0
\end{align*}
for any $\epsilon > 0$ and functions $h$ such that $h(n) / n^{\frac{\gamma}{1+\gamma}}\log(n) \rightarrow \infty$. Therefore, choose $b = n^{\frac{\gamma}{1+\gamma}}\log^2(n)$, then (\ref{eq:algrowth_closeness_first}) is satisfied (actually, it holds for any positive function $g(n)$). It remains to prove 
\begin{align}\label{eq:algrowth_closeness_second}
 \mathbb{P}_{\Theta}\Big(R(n)-  \sum_{k=b+1}^n \Lambda(k) \geq \frac{g(n)}{2}\Big) \rightarrow 0. 
\end{align}
Notice that
\begin{align*}
R(n) - \sum_{k=b+1}^n \Lambda(k) 
\leq R(b)
\leq \sum_{k=1}^b \Lambda(k)\one_{\{C_{k}=0 \}}
=: S(b)
\end{align*}
and thus it suffices to show
\begin{align*}
 \mathbb{P}_{\Theta}\Big(S(b)\geq \frac{g(n)}{2}\Big) \rightarrow 0. 
\end{align*}
To prove this we will approximate $S(b)$ by the functional
\begin{align*}
 S'(b) := \sum_{k=1}^b \Lambda(k)\one_{\{Z_{k}=0 \}}
\end{align*}
 where the $Z_k$ are independent Poisson random variables with parameter $k^{\gamma -1} t^k $ as in \eqref{eq:tot_var_dist_d_bn}. Then
\begin{align*}
 \mathbb{P}_{\Theta}\Big(S(b)\geq \frac{g(n)}{2}\Big) = \mathbb{P}_{\Theta}^t\Big(S'(b)\geq \frac{g(n)}{2}\Big) + O\big(d_K(S(b),S'(b))\big),
\end{align*}
where $d_K(X,Y)$ denotes again the Kolmogorov distance of the random variables $X$ and $Y$. Clearly,
\begin{align*}
 d_K(S(b),S'(b)) \leq d_{TV}(S(b),S'(b)) \leq d_b(n),
\end{align*}
and Theorem~\ref{thm:algrowth_total_variation} shows that $d_b(n) \rightarrow 0$ if and only if $b = o(n^{\frac{1}{1+\gamma}})$. For $0 < \gamma < 1$ we have $b = n^{\frac{\gamma}{1+\gamma}}\log^2(n) = o(n^{\frac{1}{1+\gamma}})$. Therefore, it suffices to show
\begin{align*}
\mathbb{P}_{\Theta}^t\Big(S'(b)\geq \frac{g(n)}{2}\Big) \rightarrow 0,
\end{align*}
which is equivalent to
\begin{align}\label{eq:algrowth_closeness_third}
\log\mathbb{P}_{\Theta}^t\Big(e^{sS'(b)}\geq e^{\frac{s g(n)}{2}}\Big) \rightarrow -\infty,
\end{align}
for $s\geq 0$. The moment generating function of $S'(b)$ is given by
\begin{align*}
 \ETt{e^{sS'(b)}} = \prod_{k=1}^b \Big( 1+ e^{- k^{\gamma-1}t^k}\big(e^{s \Lambda(k)}-1\big) \Big),
\end{align*}
where $t^k = \exp(- k n^{-\frac{1}{1+\gamma}})$. By Markov's inequality and with $\log(1+z)\leq z$ we get
\begin{align*}
 \log\mathbb{P}_{\Theta}^t\Big(e^{s S'(b)}\geq e^{\frac{s g(n)}{2}}\Big) 
 &\leq  -\frac{s g(n)}{2} + \sum_{k=1}^b \log \Big( 1+ e^{- t^k k^{\gamma-1}}\big(e^{s \Lambda(k)}-1\big) \Big) \\
&\leq  -\frac{s g(n)}{2} +  \sum_{k=1}^b  e^{- t^k k^{\gamma-1}}\big(e^{s\Lambda(k)}-1\big)  \\
&\leq  -\frac{s g(n)}{2}+ \big(e^{s\log(n)}-1\big) \sum_{k=1}^b  e^{- t^k k^{\gamma-1}} .
\end{align*}
Since $b = o\big(n^{\frac{1}{1+\gamma}}\big)$, there is a constant $c>0$ such that
\begin{align*}
 \sum_{k=1}^b  \exp\big(-  t^k k^{\gamma-1}\big) 
&\leq \sum_{k=1}^b  \exp\big(-  k^{\gamma-1}\exp(- b n^{-\frac{1}{1+\gamma}})\big)\\
&= \sum_{k=1}^b  \exp\big(- c  k^{\gamma-1} )\\
&\leq \int_1^b \exp\big(- c  x^{\gamma-1} ) dx\\
&= O\Big(\Gamma\Big(\frac{1}{1-\gamma},b^{\gamma-1}\Big) - \Gamma\Big(\frac{1}{1-\gamma},1\Big)\Big)\\
&= O(1).
\end{align*}
Thus for $g(n) = \log(n)\log\log(n)$ and  $s:= \sqrt{\log\log(n)} / g(n)$ this yields
\begin{align*}
 \log\mathbb{P}_{\Theta}^t\Big(e^{s S'(b)}\geq e^{\frac{s}{2}}\Big) 
\leq -\frac{\sqrt{\log\log(n)}}{2} + O\big( \log\log(n)^{-1/2} \big).
\end{align*}
The proof is complete.
\end{proof}
%
%
%-----------------------------------------------------------------------------
%-----------------------------------------------------------------------------

\vskip 15pt

\subsection{Proof of Theorem~\ref{thm:fclt}}\label{subsection:FCLT}

In Remark~\ref{remark_bound_total_var_distance_too_small} was mentioned that the bound $b = o(n^{\frac{1}{1+\gamma}})$ is too small to study properties of the whole cycle count process via the independent approximating Poisson random variables. However, Theorem~\ref{thm:fclt} gives an example of how to exploit Theorem~\ref{thm:algrowth_total_variation} in order to study the behavior of the small cycles. 
Recall that for $x > 0$ we define $x^* := \lfloor x \, n^{\frac{\gamma}{1+\gamma}} \rfloor$ and 

\begin{align*}
B_n(x) := \frac{\log(O_{x^*}) -\frac{1}{1+\gamma} \, x^{\gamma} \log(n) \, n^{\frac{\gamma^2}{1+\gamma}}}{\sqrt{\frac{\gamma}{(1+\gamma)^2} \log^2(n) \, n^{\frac{\gamma^2}{1+\gamma}}}}.
\end{align*}
%
%We will prove the following
%
%\begin{theorem}
%Let $\hat{g}_{\Theta}$ be as in (\ref{eq:algrowth_g_Theta}) with $0 <\gamma < 1$, $B_n(x)$ as in \eqref{eq:algrowth_brownian_functional} and denote by %$\mathcal{W}$ a standard Brownian motion. Then, as $n \rightarrow \infty$ and for $x > 0$, $B_n(t)$ converges weakly to $\mathcal{W}(x^{\gamma})$ .
%\end{theorem}
%
%
%
\begin{proof}[Proof of Theorem~\ref{thm:fclt}]
First notice that 
$$\log(Y_{x^*}) - \log(O_{x^*}) 
\leq \log(Y_n) - \log(O_n) $$
and thus by means of Lemma~\ref{lem:algrowth_closeness} it is sufficient to show that 
\begin{align*}
\frac{\log(Y_{x^*}) -\frac{1}{1+\gamma} \, x^{\gamma} \log(n) \, n^{\frac{\gamma^2}{1+\gamma}}}{\sqrt{\frac{\gamma}{(1+\gamma)^2} \log^2(n) \, n^{\frac{\gamma^2}{1+\gamma}}}}
\end{align*}
satisfies the required convergence.
Since $x^* = o(n^{\frac{1}{1+\gamma}})$ and in a discrete probability space $d_b(n) \rightarrow 0$ is equivalent to convergence in distribution of $(C_1, ..., C_b)$ to $(Z_1, ..., Z_b)$, Theorem ~\ref{thm:algrowth_total_variation} yields 
 \begin{align*}
 \ET{e^{s \log(Y_{x^*})}}
= \ETt{e^{s \log(P_{x^*})}} \big(1+o(1)\big)
\end{align*}
where $\log(P_{x^*}) = \sum_{m=1}^{x^*} \log(m)Z_m$.
%
%\begin{align*}
%& \, \Big|\ET{e^{it \log(Z_b)}}-\ET{e^{it \log(Y_b)}}  \Big| \\
%
%= & \, \Big| \sum_{s \in S} e^{its} \Big( \PT{\log(Z_b) = s} -  \PT{\log(Y_b) = s}\Big) \Big| \\
%
%\leq \,&  \sum_{s \in S} \Big|\Big( \PT{\log(Z_b) = s} -  \PT{\log(Y_b) = s}\Big) \Big|\\
%
%= &\, 2 d_{TV} \big(\log(Z_b), \log(Y_b)\big)\\
%
%\leq &\, 2 d_{b}(n) 
%\end{align*}
%
Thus we have to show 
\begin{align*}
\frac{\log(P_{x^*}) -\frac{1}{1+\gamma} \, x^{\gamma} \log(n) \, n^{\frac{\gamma^2}{1+\gamma}}}{\sqrt{\frac{\gamma}{(1+\gamma)^2} \log^2(n) \, n^{\frac{\gamma^2}{1+\gamma}}}}
\overset{d}{\longrightarrow} \mathcal{W}(x^{\gamma}).
\end{align*}
The convergence of the finite dimensional distributions is easily established. By independence, the characteristic function of $\log(P_{x^*})$ is given by
\begin{align*}
\ETt{e^{is \log(P_{x^*})}} 
&= \exp \bigg( \sum_{m=1}^{x^*} m^{\gamma -1} t^m (e^{is \log(m)}-1) \bigg) \\
&= \exp \bigg( is \alpha(x^*) - \frac{s^2}{2} \beta(x^*) + \delta(s,x^*) \bigg)
\end{align*}
where
\begin{align*}
\alpha(x^*) = \sum_{m=1}^{x^*} m^{\gamma -1} t^m \log(m), 
\quad 
\beta(x^*) = \sum_{m=1}^{x^*} m^{\gamma -1} t^m \log^2(m), 
\end{align*}
and
\begin{align*}
\delta(s, x^*) = \sum_{j=3}^{\infty}\sum_{m=1}^{x^*} m^{\gamma -1} t^m \frac{(is)^j \log^{j}(m)}{j!}.
\end{align*}
With computations similar to those in the proof of Lemma~\ref{lem:mu_0b and others} we get
\begin{align*}
&\alpha(x^*) = \frac{1}{1+\gamma} \, x^{\gamma} \, n^{\frac{\gamma^2}{1+\gamma}} \, \log(n)  + O\big(n^{\frac{\gamma^2}{1+\gamma}}\big), \\
&\beta(x^*) = \frac{\gamma}{(1+\gamma)^2} x^{\gamma}\, n^{\frac{\gamma^2}{1+\gamma}} \, \log^2(n) + O\big(n^{\frac{\gamma^2}{1+\gamma}}\log(n)\big), \\
&\sum_{m=1}^{x^*} m^{\gamma -1} t^m \log^3(m) =  O\big(n^{\frac{\gamma^2}{1+\gamma}}\log(n)\big).
\end{align*}
This proves that for every fixed $x$ we have
\begin{align*}
\widetilde{B}_n(x) 
\overset{d}{\longrightarrow} \mathcal{N}(0,x^{\gamma}).
\end{align*}
It remains to prove that the process $\widetilde{B}_n(.)$ is tight. We use the moment condition given in \cite[Theorem 15.6]{Bi99}, that is we have to show that for any $n \geq 0$ and $0 \leq x_1 < x < x_2$
\begin{align*}
\mathcal{E}_{\Theta}(x_1, x_2):=
 \ET{(\widetilde{B}_n(x)-\widetilde{B}_n(x_1 ))^2(\widetilde{B}_n(x_2)-\widetilde{B}_n(x))^2}
= O\big((x_2-x_1)^2\big).
\end{align*}
To prove this we use the independence of the $Z_m$. Denote 
$$\log(P_{x^*}^{y^*}) = \sum_{m=x^*+1}^{y^*} \log(m)Z_m.$$
Then
\begin{align*}
\mathcal{E}_{\Theta}(x_1, x_2) 
&= O \bigg( \Big(x^{\gamma}\, n^{\frac{\gamma^2}{1+\gamma}} \, \log^2(n) \Big)^{-2}
\mathbb{V}_{\Theta}\big(\log(P_{x_1^*}^{x^*})\big) 
\mathbb{V}_{\Theta}\big(\log(P_{x^*}^{x_2^*}) \big)\bigg) \\
&= O \bigg( \Big(x^{\gamma}\, n^{\frac{\gamma^2}{1+\gamma}} \, \log^2(n) \Big)^{-2}
\big(\beta(x^*)-\beta(x_1^*)\big) 
\big(\beta(x_2^*)-\beta(x^*)\big)\bigg)\\
&
= O \big( (x^{\gamma}-x_1^{\gamma})(x_2^{\gamma}-x^{\gamma})\big)
= O\big((x_2-x_1)^2\big).
\end{align*}
This completes the proof.
\end{proof}

%-----------------------------------------------------------------------------
%-----------------------------------------------------------------------------

\subsection*{Acknowledgments}
The research leading to these results has been supported by the SFB $701$
(Bielefeld) and has received funding from the People Programme (Marie
Curie Actions) of the European Union's Seventh Framework Programme
(FP7/$2007-2013$) under REA grant agreement nr.$291734$.

\bibliographystyle{acm}
\bibliography{literatur}

\begin{thebibliography}{10}

\bibitem{Ap84}
{\sc Apostol, T.}
\newblock {\em Introduction to analytic number theory}.
\newblock Springer-Verlag, New York, 1984.

\bibitem{ABT02}
{\sc Arratia, R., Barbour, A., and Tavar{\'e}, S.}
\newblock {\em Logarithmic combinatorial structures: a probabilistic approach}.
\newblock EMS Monographs in Mathematics. European Mathematical Society (EMS),
  Z{\"u}rich, 2003.

\bibitem{ArBaTa00}
{\sc Arratia, R., Barbour, A.~D., and Tavar{\'e}, S.}
\newblock Limits of logarithmic combinatorial structures.
\newblock {\em Ann. Probab. 28}, 4 (2000), 1620--1644.

\bibitem{ArBaTa92}
{\sc Arratia, R., Barbour, A.~D., and Tavar{\'e}, T.}
\newblock Poisson process approximations for the {E}wens sampling formula.
\newblock {\em Ann. Appl. Probab. 2}, 3 (1992), 519--535.

\bibitem{ArTa92}
{\sc Arratia, R., and Tavar{\'e}, S.}
\newblock The cycle structure of random permutations.
\newblock {\em Ann. Probab. 20}, 3 (1992), 1567--1591.

\bibitem{AT92b}
{\sc Arratia, R., and Tavar{\'e}, S.}
\newblock Limit theorems for combinatorial structures via discrete process
  approximations.
\newblock {\em Random Structures Algorithms 3}, 3 (1992), 321--345.

\bibitem{Ba90}
{\sc Barbour, A.~D.}
\newblock [poisson approximation and the chen-stein method]: Comment.
\newblock {\em Statistical Science 5}, 4 (11 1990), 425--427.

\bibitem{Be70}
{\sc Best, M.~R.}
\newblock The distribution of some variables on symmetric groups.
\newblock {\em Nederl. Akad. Wetensch. Proc. Ser. A {\bf 73}=Indag. Math. 32\/}
  (1970), 385--402.

\bibitem{BeUeVe11}
{\sc Betz, V., Ueltschi, D., and Velenik, Y.}
\newblock Random permutations with cycle weights.
\newblock {\em Ann. Appl. Probab. 21}, 1 (2011), 312--331.

\bibitem{Bi99}
{\sc Billingsley, P.}
\newblock {\em Convergence of probability measures}, second~ed.
\newblock Wiley Series in Probability and Statistics: Probability and
  Statistics. John Wiley \& Sons Inc., New York, 1999.
\newblock A Wiley-Interscience Publication.

\bibitem{CiZe13}
{\sc Cipriani, A., and Zeindler, D.}
\newblock The limit shape of random permutations with polynomially growing
  cycle weights.
\newblock Preprint, former title: Fluctuations near the limit shape of random
  permutations under a conservative measure, Dec. 2013.

\bibitem{DePi85}
{\sc DeLaurentis, J.~M., and Pittel, B.~G.}
\newblock Random permutations and {B}rownian motion.
\newblock {\em Pacific J. Math. 119}, 2 (1985), 287--301.

\bibitem{ErUe11}
{\sc Ercolani, N.~M., and Ueltschi, D.}
\newblock Cycle structure of random permutations with cycle weights.
\newblock {\em Random Structures Algorithms 44}, 1 (2014), 109--133.

\bibitem{ErTu65}
{\sc Erd{\H{o}}s, P., and Tur{\'a}n, P.}
\newblock On some problems of a statistical group-theory. {I}.
\newblock {\em Z. Wahrscheinlichkeitstheorie und Verw. Gebiete 4\/} (1965),
  175--186 (1965).

\bibitem{FlSe09}
{\sc Flajolet, P., and Sedgewick, R.}
\newblock {\em Analytic Combinatorics}.
\newblock Cambridge University Press, New York, NY, USA, 2009.

\bibitem{Ha90}
{\sc Hansen, J.~C.}
\newblock A functional central limit theorem for the {E}wens sampling formula.
\newblock {\em J. Appl. Probab. 27}, 1 (1990), 28--43.

\bibitem{HuNaNiZe11}
{\sc Hughes, C., Najnudel, J., Nikeghbali, A., and Zeindler, D.}
\newblock Random permutation matrices under the generalized ewens measure.
\newblock {\em Annals of Applied Probability 23}, 03 (2013), 987--1024.

\bibitem{JaKoNi09}
{\sc Jacod, J., Kowalski, E., and Nikeghbali, A.}
\newblock Mod-gaussian convergence: new limit theorems in probability and
  number theory.
\newblock To appear in Forum Mathematicum.

\bibitem{KoNi09a}
{\sc Kowalski, E., and Nikeghbali, A.}
\newblock Mod-gaussian convergence and the value distribution of $\zeta(1/2 +
  it)$ and related quantities.
\newblock preprint, 2009.

\bibitem{La09}
{\sc Landau, E.}
\newblock {\em Handbuch der {L}ehre von der {V}erteilung der {P}rimzahlen. 2
  {B}\"ande}.
\newblock Chelsea Publishing Co., New York, 1953.
\newblock 2d ed, With an appendix by Paul T. Bateman.

\bibitem{Mac95}
{\sc Macdonald, I.~G.}
\newblock {\em Symmetric functions and {H}all polynomials}, second~ed.
\newblock Oxford Mathematical Monographs. The Clarendon Press Oxford University
  Press, New York, 1995.
\newblock With contributions by A. Zelevinsky, Oxford Science Publications.

\bibitem{Ma12a}
{\sc Manstavi{\v{c}}ius, E.}
\newblock Total variation approximation for random assemblies and a functional
  limit theorem.
\newblock {\em Monatshefte für Mathematik 161\/} (2010), 313--334.
\newblock 10.1007/s00605-009-0151-x.

\bibitem{Ma11}
{\sc Manstavi{\v{c}}ius, E.}
\newblock A limit theorem for additive functions defined on the symmetric
  group.
\newblock {\em Lith. Math. J. 51}, 2 (2011), 220--232.

\bibitem{MaNiZe11}
{\sc Maples, K., Nikeghbali, A., and Zeindler, D.}
\newblock The number of cycles in a random permutation.
\newblock {\em Electron. Commun. Probab. 17\/} (2012), no. 20, 1--13.

\bibitem{NiStZe13}
{\sc Nikeghbali, A., ans Storm, J., and Zeindler, D.}
\newblock Large cycles and a functional central limit theorem for generalized
  weighted random permutations.
\newblock Preprint, 2013.

\bibitem{NiZe11}
{\sc Nikeghbali, A., and Zeindler, D.}
\newblock The generalized weighted probability measure on the symmetric group
  and the asymptotic behaviour of the cycles.
\newblock {\em Annales de L'Institut Poincar\'e 49, no.4\/} (2011), 961--981.

\bibitem{StZe14b}
{\sc Storm, J., and Zeindler, D.}
\newblock The order of a random permutation: Large deviation estimates, local
  limit theorems and the {R}iemann hypothesis.
\newblock in preparation.

\bibitem{Ya10a}
{\sc Yakymiv, A.~L.}
\newblock A limit theorem for the logarithm of the order of a random
  {$A$}-permutation.
\newblock {\em Diskret. Mat. 22}, 1 (2010), 126--149.

\end{thebibliography}

\end{document}